\numberwithin{equation}{section}
\newcommand{\ds}{\displaystyle}
\newcommand{\dd}{\mathrm{d}}
\newtheorem{Theorem}{Theorem}[section]
\newtheorem{Proposition}{Proposition}[section]
\theoremstyle{definition}           
\newtheorem{defn}{Definition}[section]
\theoremstyle{remark}
\newtheorem{Remark}{Remark}[section]
\newcolumntype{L}[1]{>{\raggedright\let\newline\\\arraybackslash\hspace{0pt}}m{#1}}
\newcolumntype{C}[1]{>{\centering\let\newline\\\arraybackslash\hspace{0pt}}m{#1}}
\newcolumntype{R}[1]{>{\raggedleft\let\newline\\\arraybackslash\hspace{0pt}}m{#1}}
\begin{document}
	\title[Mixed $3$ \& $4$-wave kinetic equation]{An energy cascade finite volume scheme for a mixed 3- and 4-wave kinetic equation arising from the theory of finite-temperature trapped Bose gases}
	
	\author[Arijit Das]{Arijit Das}
	
	\address[Arijit Das]{Department of Mathematics \\
		Thapar Institute of Engineering and Technology, Patiala-147004, Punjab, INDIA}
	
	\email[Arijit Das]{\href{mailto:arijit.das@thapar.edu}{arijit.das@thapar.edu}}
	
	\author[Minh-Binh Tran]{Minh-Binh Tran}
	\address[Minh-Binh Tran]{Department of Mathematics \\
		Texas A\&M University, College Station, TX, 77843 USA}
	
	\email[Minh-Binh Tran]{\href{mailto:minhbinh@tamu.edu}{minhbinh@tamu.edu}}
	\thanks{ M.-B. T is  funded in part by  a   Humboldt Fellowship,   NSF CAREER  DMS-2303146, and NSF Grants DMS-2204795, DMS-2305523,  DMS-2306379. }
	
	\begin{abstract}
Building on recent developments in numerical schemes designed to capture energy cascades for 3-wave kinetic equations~\cite{das2024numerical, walton2022deep, walton2023numerical, walton2024numerical}, we construct in this work a finite-volume algorithm for a significantly more complex wave kinetic equation whose collision operator incorporates both 3-wave and 4-wave interactions. This model arises in the context of finite-temperature Bose–Einstein condensation. We establish theoretical properties of the proposed scheme, and our numerical experiments demonstrate that it successfully captures the energy cascade behavior predicted by the equation.

	\end{abstract}
	
	\maketitle
	\allowdisplaybreaks
	\tableofcontents
	\section{Introduction}
	
Wave turbulence theory, which addresses the dynamics of nonlinear wave fields far from thermal equilibrium, traces its origins to the seminal contributions of several early pioneers. Foundational insights were introduced by Peierls~\cite{Peierls:1993:BRK,Peierls:1960:QTS}, followed by major developments from Brout and Prigogine~\cite{brout1956statistical}, Zaslavskii and Sagdeev~\cite{zaslavskii1967limits}, and Hasselmann~\cite{hasselmann1962non,hasselmann1974spectral}. Later advances by Benney, Saffman, and Newell~\cite{benney1966nonlinear,benney1969random}, together with the influential work of Zakharov~\cite{zakharov2012kolmogorov}, consolidated the framework.

In a recent series of works, we have developed numerical schemes for the isotropic 3-wave kinetic equations \cite{das2024numerical,walton2022deep,walton2023numerical,walton2024numerical} that accurately capture the associated energy cascade. This cascade manifests as a progressive loss of energy from the solutions over time. Continuing this research program, we consider in the present work a much more sophisticated wave kinetic equation whose collision operator incorporates both 3-wave and 4-wave interaction processes.
 We develop a finite-volume scheme capable of capturing the corresponding loss of energy in the solution. The equation reads \begin{equation}
	\label{wavekinetic}
	\partial_t f(t,k) = \mathbb{Q}[f](t,k)
	:= C_{12}[f](t,k) + C_{22}[f](t,k), 
	\qquad 
	f(0,k) = f^{\mathrm{in}}(k),
\end{equation}
where the collision operator \( \mathbb{Q}[f] \) is composed of contributions from both 3-wave and 4-wave interaction processes, detailed below.

\medskip
\noindent\textit{3-wave interaction term:}
\begin{align}
	\label{C12}
	\begin{split}
		C_{12}[f] :=\ &{c}_{12}
		\iint_{\mathbb{R}^3 \times \mathbb{R}^3}
		\mathrm{d}k_1\, \mathrm{d}k_2 \;
		\mathcal{W}_{12}(|k|,|k_1|,|k_2|)\,
		\delta(\omega - \omega_1 - \omega_2)\,
		\delta(k - k_1 - k_2)
		\big[ f_1 f_2 - (f_1 + f_2)f \big] \\[0.3em]
		&\ - 2
		\iint_{\mathbb{R}^3 \times \mathbb{R}^3}
		\mathrm{d}k_1\, \mathrm{d}k_2 \;
		\mathcal{W}_{12}(|k_1|,|k|,|k_2|)\,
		\delta(\omega_1 - \omega - \omega_2)\,
		\delta(k_1 - k - k_2)
		\big[ f f_2 - (f + f_2)f_1 \big].
	\end{split}
\end{align}

\medskip
\noindent\textit{4-wave interaction term:}
\begin{equation}
	\label{C22}
	\begin{aligned}
		C_{22}[f] :=\ &{c}_{22}
		\iiint_{\mathbb{R}^{3 \times 3}}
		\mathrm{d}k_1\, \mathrm{d}k_2\, \mathrm{d}k_3 \;
		\mathcal{W}_{22}(|k|,|k_1|,|k_2|,|k_3|)\,
		\delta(k + k_1 - k_2 - k_3)\,
		\delta(\omega + \omega_1 - \omega_2 - \omega_3) \\
		&\qquad\qquad \times
		\big[ f_2 f_3 (f_1 + f) - f f_1 (f_2 + f_3) \big].
	\end{aligned}
\end{equation}

In the expressions above, the parameters ${c}_{12}$ and ${c}_{22}$ are positive constants, $t \in \mathbb{R}_+$ denotes time, and $k \in \mathbb{R}^3$ represents the three-dimensional momentum variable. The function $\omega(k)=\omega(|k|)$ specifies the dispersion relation of the waves, while $f_0(k)=f_0(|k|)\ge 0$ denotes the initial particle density of the thermal cloud. Throughout, we assume that both the dispersion relation $\omega$ and the solution $f$ are radial; precise conditions on $\omega$ will be given   below. Under this radial symmetry, we may identify $f(k)$ with $f(|k|)$ and equivalently with $f(\omega)$.

For convenience, we employ the shorthand notation
\begin{equation}\label{Shorthand}
	\begin{aligned}
		& f = f(k) = f(|k|) = f(\omega), \qquad
		f_1 = f(k_1) = f(|k_1|) = f(\omega_1), \qquad
		f_2 = f(k_2) = f(|k_2|) = f(\omega_2), \\
		&\text{and similarly for } \omega = \omega(k) = \omega(|k|), \ \text{etc.}
	\end{aligned}
\end{equation}
The kernels are given as follows:
\begin{equation}\label{Kernel}
	\begin{aligned}
		\mathcal{W}_{12}(|k|,|k_1|,|k_2|) \ &= (\omega\,\omega_1\,\omega_2)^\sigma,\\
		\mathcal{W}_{22}(|k|,|k_1|,|k_2|,|k_3|) \ &= (\omega\,\omega_1\,\omega_2\,\omega_3)^\gamma,
	\end{aligned}
\end{equation}
where \( \sigma \ge 0 \) and \( \gamma \ge 0 \) are prescribed constants. In our numerical tests, we assume
\begin{equation}
	\label{omega}
	\omega(k) = |k|^\rho, \qquad \rho \ge 1.
\end{equation}

Below, we recall the concept of the \emph{energy cascade}, which was rigorously established in the recent works \cite{soffer2018energy,staffilani2024energy,staffilani2024condensation,staffilani2025energyfinite}. Initially, the energy is defined as
\begin{equation}
	\int_{\mathbb{R}^3} \mathrm{d}k\, f^{\mathrm{in}}(k)\,\omega(k)
	= 2\pi \int_{\mathbb{R}_+} \mathrm{d}\omega\, f^{\mathrm{in}}(\omega)\,\frac{|k|^2}{\omega'(|k|)} .
\end{equation}
As time evolves, the energy is expected to leave any bounded domain, causing the quantity
\begin{equation}
	\int_{[0,R)} \mathrm{d}\omega\, f(t,\omega)\,\frac{|k|^2}{\omega'(|k|)},
\end{equation}
to decrease for every $R > 0$ and eventually vanish.
 Numerical schemes for the isotropic 3-wave kinetic equations that accurately capture the associated energy cascade have been developed in \cite{das2024numerical, walton2022deep, walton2023numerical, walton2024numerical}. In this work, we construct numerical schemes capable of capturing this phenomenon for isotropic solutions of \eqref{wavekinetic}.

 The experimental achievement of Bose--Einstein condensation (BEC) in dilute atomic gases~\cite{WiemanCornell, Ketterle, bradley1995evidence} represented a major breakthrough in quantum statistical physics and sparked an extensive body of theoretical and experimental research. In these landmark experiments, rapid evaporative cooling is used to lower the temperature of a Bose gas below its critical value, triggering the formation of a macroscopic condensate. A key issue in this area is understanding the nonequilibrium dynamics of the condensate at finite temperatures, where the thermal (non-condensed) component is governed by kinetic equations, as discussed in Subsection \ref{Subs:PhysicalContext} below. \textit{Therefore, numerical simulations of \eqref{wavekinetic} are essential, since reliable simulations can inform and guide experimental physicists before they carry out actual experiments. Our work represents a step toward this goal, especially in verifying the energy cascade phenomenon.}

	\subsection{Physical context }\label{Subs:PhysicalContext}

A systematic kinetic theory for Bose gases at finite temperature was first developed by Kirkpatrick and Dorfman~\cite{KD1, KD2}, who derived kinetic equations describing the behavior of particles outside the condensate. Building on their work, Zaremba, Nikuni, and Griffin introduced in~\cite{ZarembaNikuniGriffin:1999:DOT} a coupled model in which a quantum Boltzmann equation governs the thermal cloud while a Gross--Pitaevskii equation describes the evolution of the condensate. Independently, Pomeau, Brachet, Metens, and Rica proposed a closely related kinetic framework in~\cite{PomeauBrachetMetensRica}.

The kinetic descriptions proposed in these works feature two principal classes of collision mechanisms, each represented by a distinct operator whose precise definitions will be given later:
\begin{itemize}
	\item \( \mathscr{C}_{22} \): accounts for binary \(2 \leftrightarrow 2\) scattering events among thermal excitations;
	\item \( \mathscr{C}_{12} \): represents \(1 \leftrightarrow 2\) processes involving one condensate particle and two excited particles.
\end{itemize}

An important extension of this framework was introduced by Reichl and Gust~\cite{ReichlGust:2012:CII}, who identified a third operator, \( \mathscr{C}_{31} \), describing \(1 \leftrightarrow 3\) scattering events within the excitation cloud. The mathematical derivation of this term was established only recently by Tran and Pomeau~\cite{tran2020boltzmann}, and its physical significance has been reinforced by experimental evidence reported in~\cite{reichl2019kinetic}.
For comprehensive discussions of these kinetic frameworks and their significance in modeling the dynamics of Bose--Einstein condensation, we refer the reader to the reviews~\cite{GriffinNikuniZaremba:BCG:2009, PomeauBinh, tran2021thermal}.

The quantum kinetic description of a Bose gas incorporating the collision mechanisms \( \mathscr{C}_{22} \) and \( \mathscr{C}_{12} \) takes the form
\begin{equation}
	\label{KineticFinal}
	\partial_t f(t,k)
	= \mathscr{C}_{12}[f](t,k) + \mathscr{C}_{22}[f](t,k),
	\qquad
	f(0,k) = f^{\mathrm{in}}(k).
\end{equation}
Their explicit forms are as follows.

\paragraph{\textit{The \( \mathscr{C}_{12} \) collision operator.}}
\begin{equation}
	\label{C12Discrete}
	\begin{aligned}
		\mathscr{C}_{12}[f](t,k)
		=\;&
		4\pi g^2 n 
		\iiint_{\mathbb{R}^3} \mathrm{d}k_1 \,\mathrm{d}k_2\,\mathrm{d}k_3\;
		\big[ \delta(k - k_1) - \delta(k - k_2) - \delta(k - k_3) \big]
		\\
		&\times \delta\!\left( \omega(k_1) - \omega(k_2) - \omega(k_3) \right)
	\mathcal{W}_{12}
		\delta(k_1 - k_2 - k_3)
		\\
		&\times
		\big[
		f(k_2) f(k_3) (f(k_1)+1)
		- f(k_1)(f(k_2)+1)(f(k_3)+1)
		\big].
	\end{aligned}
\end{equation}

\paragraph{\textit{The \( \mathscr{C}_{22} \) collision operator.}}
\begin{equation}
	\label{C22Discrete}
	\begin{aligned}
		\mathscr{C}_{22}[f](t,k)
		=\;&
		\pi g^2 
		\iiiint_{\mathbb{R}^3}
		\mathrm{d}k_1\,\mathrm{d}k_2\,\mathrm{d}k_3\,\mathrm{d}k_4\;
		\big[ \delta(k - k_1) + \delta(k - k_2)
		- \delta(k - k_3) - \delta(k - k_4) \big]
		\\
		&\times
		\mathcal{W}_{22}\,
		\delta(k_1 + k_2 - k_3 - k_4)\,
		\delta\!\left( \omega(k_1) + \omega(k_2)
		- \omega(k_3) - \omega(k_4) \right)
		\\
		&\times
		\big[
		f(k_3) f(k_4) (f(k_1)+1)(f(k_2)+1)
		- f(k_1) f(k_2) (f(k_3)+1)(f(k_4)+1)
		\big].
	\end{aligned}
\end{equation}

Here \( n \) denotes the condensate density and \( g \) is the interaction strength. The functions
\(
\mathcal{W}_{12},\;
\mathcal{W}_{22}
\)
are the corresponding collision kernels and are known explicitly.
For treatments allowing time dependence of the condensate density, we refer the reader to the book~\cite{PomeauBinh}. We note that the global existence of classical solutions to \eqref{KineticFinal} was established in \cite{soffer2018dynamics}, and the question of condensate growth was also investigated in \cite{staffilani2025condensate}.

For convenience, we introduce the shorthand constants
\begin{equation} \label{Simpl1}
	c_{12} := 4\pi g^2 n, 
	\qquad 
	c_{22} := \pi g^2, 
	\qquad 
	{c}_{31} := \pi g^2.
\end{equation}

A standard simplification in kinetic models is to keep only the leading-order nonlinear contributions within the collision integrals while discarding lower-order terms (see, for instance \cite{spohn2010kinetics}). Under this approximation, the nonlinear factors appearing in the collision operators may be replaced by their dominant components. Specifically:
\begin{itemize}
	\item For the \( \mathscr{C}_{12} \) operator,
	\[
	f(k_2) f(k_3) 
	- f(k_1)\big( f(k_2) + f(k_3) + 1 \big)
	\quad \longrightarrow \quad
	f(k_2) f(k_3) 
	- f(k_1)\big( f(k_2) + f(k_3) \big),
	\]
	\item For the \( \mathscr{C}_{22} \) operator,
	\[
	f(k_3) f(k_4) (f(k_1)+1)(f(k_2)+1)
	- f(k_1) f(k_2) (f(k_3)+1)(f(k_4)+1)
	\]
	\[
	\longrightarrow\quad
	f(k_3) f(k_4)\big( f(k_1) + f(k_2) \big)
	\;-\;
	f(k_1) f(k_2)\big( f(k_3) + f(k_4) \big).
	\]
\end{itemize}

Applying these approximations yields the mixed 3-wave and 4-wave kinetic equation \eqref{wavekinetic}.

\begin{Remark}
The numerical simulation of $\mathscr{C}_{13}$ is highly complicated and will be investigated in a forthcoming work.

	\end{Remark}
Finally, we summarize the current analytical and numerical developments for both 4-wave and 3-wave kinetic equations:

\begin{itemize}
	
	\item \textit{Derivation of wave kinetic equations.}  
	A major breakthrough in the rigorous derivation of wave turbulence equations was achieved in the recent series of works by Deng and Hani~\cite{deng2021propagation, deng2022rigorous, deng2023long, deng2021full}, where the wave turbulence   description was justified for long-time dynamics.
	
	\item \textit{Numerical methods for wave kinetic equations.}  
	For 4-wave interactions, a fast Fourier spectral scheme was developed in~\cite{qi2025fast}.  
	For 3-wave interactions, a numerical method based on piecewise polynomial approximation of the resonant manifold was introduced in~\cite{banks2025new}.
	
	\item \textit{Analysis of 4-wave kinetic equations of \(2 \leftrightarrow 2\) type.}  
	Convergence rates for discrete approximations and local well-posedness for the one-dimensional MMT model have been obtained in~\cite{dolce2024convergence, germain2023local}.  
	The stability and instability of Kolmogorov–Zakharov (KZ) spectra, along with stability near equilibrium, have been investigated in~\cite{menegaki20222, escobedo2024instability, collot2024stability}.  
	Further results on local existence and propagation of moments in polynomially weighted \(L^\infty\) spaces appear in~\cite{GermainIonescuTran, ampatzoglou2025inhomogeneous}.  
	On spatially periodic domains, recent work has addressed entropy maximizers and spectral stability~\cite{escobedo2024entropy, germain2024stability}.
	
	\item \textit{Analysis of 3-wave kinetic equations.}  
	The 3-wave kinetic models arise  in a wide range of physical settings have been investigated, including stratified ocean flows~\cite{GambaSmithBinh,kim2025wave}, Bose–Einstein condensates~\cite{cortes2020system,EPV, escobedo2023linearized1,escobedo2023linearized, escobedo2025local,nguyen2017quantum}, phonon interactions in crystal lattices~\cite{CraciunBinh, EscobedoBinh,tran2020reaction}, capillary waves~\cite{ nguyen2017quantum}, and beam–wave interactions~\cite{rumpf2021wave}.  
	
\end{itemize}

	\section{Reformulating \eqref{wavekinetic}}

The goal of this section is to reformulate equation \eqref{wavekinetic} into the form \eqref{1_6}, which is more suitable for the construction of a numerical scheme.
First, following the standard angular integration procedure 
\cite{staffilani2024energy,staffilani2024condensation,staffilani2025condensate,staffilani2025energyfinite},
we can rewrite \eqref{wavekinetic} in the form
\begin{equation}\label{4wave1}
	\begin{aligned}
		\partial_t f(t,\omega)
		= \mathcal{Q}[f]
		=\;&
		C_1 \iiint_{\mathbb{R}_+^{3}}
		\mathrm{d}\omega_1 \frac{1}{\omega_1'}\,
		\mathrm{d}\omega_2 \frac{1}{\omega_2'}\,
		\mathrm{d}\omega_3 \frac{1}{\omega_3'}\;
		\delta(\omega + \omega_1 - \omega_2 - \omega_3)
		(\omega \omega_1 \omega_2 \omega_3)^\sigma
		\\
		&\times
		\big[
		f(\omega_2) f(\omega_3)\big( f(\omega_1) + f(\omega) \big)
		- f(\omega) f(\omega_1)\big( f(\omega_2) + f(\omega_3) \big)
		\big]\,\\
		&\qquad \times
		\frac{|k_1||k_2||k_3|\min\{|k_1|,|k_2|,|k_3|,|k|\}}{|k|}
		\\[0.4em]
		&\;+\;
		C_2 \iint_{\mathbb{R}_+^{2}}
		\mathrm{d}\omega_1 \frac{1}{\omega_1'}\,
		\mathrm{d}\omega_2 \frac{1}{\omega_2'}\;
		\delta(\omega - \omega_1 - \omega_2)
		(\omega \omega_1 \omega_2)^\gamma
		\\
		&\qquad \times
		\big[
		f(\omega_2) f(\omega_3)
		- f(\omega_1) f(\omega)
		- f(\omega_2) f(\omega)
		\big]\,
		\frac{|k_1||k_2|}{|k|}
		\\[0.4em]
		&\;-\;
		2 C_2 \iint_{\mathbb{R}_+^{2}}
		\mathrm{d}\omega_1 \frac{1}{\omega_1'}\,
		\mathrm{d}\omega_2 \frac{1}{\omega_2'}\;
		\delta(\omega_1 - \omega - \omega_2)
		(\omega \omega_1 \omega_2)^\gamma
		\\
		&\qquad \times
		\big[
		f(\omega) f(\omega_2)
		- f(\omega_1) f(\omega)
		- f(\omega_1) f(\omega)
		\big]\,
		\frac{|k_1||k_2|}{|k|},
	\end{aligned}
\end{equation}
where \(C_1\) and \(C_2\) are constants depending on the physical coefficients and $c_{12}, c_{22}$, and
\(\omega_1' = \omega'(|k_1|)\), 
\(\omega_2' = \omega'(|k_2|)\),
\(\omega_3' = \omega'(|k_3|)\). 

To simplify the expressions that follow, we set \(\omega_1 = \mu\), \(\omega_2 = \eta\), and \(\omega_3 = \nu\), and obtain from \eqref{4wave1}
\begin{equation}\label{4wave}
	\begin{aligned}
		\partial_t f = \mathcal{Q}[f]
		=\;&
		C_1 \iiint_{\mathbb{R}_+^{3}}
		\mathrm{d}\mu\,\frac{1}{\mu'}\,
		\mathrm{d}\eta\,\frac{1}{\eta'}\,
		\mathrm{d}\nu\,\frac{1}{\nu'}\;
		\delta(\omega + \mu - \eta - \nu)
		(\omega \mu \eta \nu)^\sigma
		\\
		&\times
		\big[
		f_\eta f_\nu (f_\mu + f_\omega)
		- f_\omega f_\mu (f_\eta + f_\nu)
		\big]\,
		\frac{|k_1||k_2||k_3|\min\{|k_1|,|k_2|,|k_3|,|k|\}}{|k|}
		\\[0.4em]
		&+ C_2
		\iint_{\mathbb{R}_+^{2}}
		\mathrm{d}\mu\,\frac{1}{\mu'}\,
		\mathrm{d}\eta\,\frac{1}{\eta'}\;
		\delta(\omega - \mu - \eta)
		(\omega \mu \eta)^\gamma
		\big[
		f_\mu f_\eta - f_\mu f_\omega - f_\eta f_\omega
		\big]\,
		\frac{|k_1||k_2|}{|k|}
		\\[0.4em]
		&- 2 C_2
		\iint_{\mathbb{R}_+^{2}}
		\mathrm{d}\mu\,\frac{1}{\mu'}\,
		\mathrm{d}\eta\,\frac{1}{\eta'}\;
		\delta(\mu - \omega - \eta)
		(\omega \mu \eta)^\gamma
		\big[
		f_\omega f_\eta - f_\mu f_\omega - f_\eta f_\mu
		\big]\,
		\frac{|k_1||k_2|}{|k|},
	\end{aligned}
\end{equation}
where \(f_\omega, f_\mu, f_\eta, f_\nu\) denote \(f(\omega)\), \(f(\mu)\), \(f(\eta)\), and \(f(\nu)\), respectively.

Since \(|k|\) is a function of \(\omega\), namely 
\(|k_1| = |k|(\mu)\), \(|k_2| = |k|(\eta)\), and \(|k_3| = |k|(\nu)\), 
the equation above can be rewritten as follows:\allowdisplaybreaks
\begin{align}\label{1_1}
	\begin{split}
		\partial_t f =\;&
		C_1 \iiint_{\mathbb{R}_+^{3}}
		|k|'(\mu)\, |k|'(\eta)\, |k|'(\nu)\;
		\delta(\omega + \mu - \eta - \nu)
		(\omega \mu \eta \nu)^\sigma
		\big[
		f_\eta f_\nu (f_\mu + f_\omega)
		\\
		&\left.
		{} - f_\omega f_\mu (f_\eta + f_\nu)
		\right]
		\frac{
			|k|(\mu)\, |k|(\eta)\, |k|(\nu)\,
			\min\{|k|(\mu), |k|(\eta), |k|(\nu), |k|(\omega)\}
		}{
			|k|(\omega)
		}
		\,\mathrm{d}\mu\,\mathrm{d}\eta\,\mathrm{d}\nu
		\\[0.4em]
		&\;+\;
		C_2 \iint_{\mathbb{R}_+^2}
		|k|'(\mu)\, |k|'(\eta)\;
		\delta(\omega - \mu - \eta)
		(\omega\mu\eta)^\gamma
		\left[
		f_\mu f_\eta - f_\mu f_\omega - f_\eta f_\omega
		\right]
		\frac{|k|(\mu)\, |k|(\eta)}{|k|(\omega)}
		\,\mathrm{d}\mu\,\mathrm{d}\eta
		\\[0.4em]
		&\;-\;
		2C_2 \iint_{\mathbb{R}_+^2}
		|k|'(\mu)\, |k|'(\eta)\;
		\delta(\mu - \omega - \eta)
		(\omega\mu\eta)^\gamma
		\left[
		f_\omega f_\eta - f_\mu f_\omega - f_\eta f_\mu
		\right]
		\frac{|k|(\mu)\, |k|(\eta)}{|k|(\omega)}
		\,\mathrm{d}\mu\,\mathrm{d}\eta.
	\end{split}
\end{align}

Using the properties of the Dirac delta distribution, the above equation reduces to:
\allowdisplaybreaks
\begin{equation}
\begin{aligned}\label{1_2}
	\partial_t f
	&=
	C_1 \int_{0}^{\infty} \int_{0}^{\omega+\mu}\,\mathrm{d}\eta\,\mathrm{d}\mu
	\Bigg[
	\frac{
		\min\{ |k|(\mu), |k|(\eta), |k|(\omega+\mu-\eta), |k|(\omega) \}
	}{
		|k|(\omega)
	}
	|k|'(\mu)\, |k|'(\eta)\, |k|'(\omega+\mu-\eta)
	\\
	&\qquad\qquad
	\times |k|(\mu)\, |k|(\eta)\, |k|(\omega+\mu-\eta)
	(\omega \mu \eta (\omega+\mu-\eta))^{\sigma}
	\\
	&\qquad\qquad
	\times \Big(
	f(\eta)f(\omega+\mu-\eta)\big(f(\mu)+f(\omega)\big)
	- f(\omega)f(\mu)\big(f(\eta)+f(\omega+\mu-\eta)\big)
	\Big)
	\Bigg]
	\\[0.5em]
	&\quad
	+ C_2 \int_{0}^{\omega}\,\mathrm{d}\mu
	\frac{|k|(\mu)\,|k|(\omega-\mu)}{|k|(\omega)}
	|k|'(\mu)\,|k|'(\omega-\mu)
	(\omega\mu(\omega-\mu))^\gamma
	\\
	&\qquad\qquad \times
	\big[
	f(\mu)f(\omega-\mu)
	- f(\mu)f(\omega)
	- f(\omega-\mu)f(\omega)
	\big]
	\\[0.5em]
	&\quad
	- 2C_2 \int_{\omega}^{\infty}	\,\mathrm{d}\mu
	\frac{|k|(\mu)\,|k|(\mu-\omega)}{|k|(\omega)}
	|k|'(\mu)\,|k|'(\mu-\omega)
	(\omega\mu(\mu-\omega))^\gamma
	\\
	&\qquad\qquad \times
	\big[
	f(\omega)f(\mu-\omega)
	- f(\mu)f(\omega)
	- f(\mu-\omega)f(\mu)
	\big].
\end{aligned}
\end{equation}
	
We now simplify the above integrals separately. For the first term, we have
\begin{align}
	I_1
	&=
	C_1 \int_{0}^{\omega} \int_{0}^{\mu}\,\mathrm{d}\eta\,\mathrm{d}\mu
	\frac{
		\min\{|k|(\mu), |k|(\eta), |k|(\omega+\mu-\eta), |k|(\omega)\}
	}{
		|k|(\omega)
	}
	|k|'(\mu)\,|k|'(\eta)\,|k|'(\omega+\mu-\eta)
	\notag \\
	&\hspace{1.4cm}
	\times |k|(\mu)\,|k|(\eta)\,|k|(\omega+\mu-\eta)
	(\omega\mu\eta(\omega+\mu-\eta))^{\sigma}
	\Big[
	f(\eta)f(\omega+\mu-\eta)
	\notag \\
	&\hspace{1.4cm}
	\times (f(\mu)+f(\omega))
	- f(\omega)f(\mu)\big(f(\eta)+f(\omega+\mu-\eta)\big)
	\Big]
	\notag \\
	&\quad
	+ C_1 \int_{0}^{\omega} \int_{\mu}^{\omega}\,\mathrm{d}\eta\,\mathrm{d}\mu
	\frac{
		\min\{|k|(\mu), |k|(\eta), |k|(\omega+\mu-\eta), |k|(\omega)\}
	}{
		|k|(\omega)
	}
	|k|'(\mu)\,|k|'(\eta)\,|k|'(\omega+\mu-\eta)
	\notag \\
	&\hspace{1.4cm}
	\times |k|(\mu)\,|k|(\eta)\,|k|(\omega+\mu-\eta)
	(\omega\mu\eta(\omega+\mu-\eta))^{\sigma}
	\Big[
	f(\eta)f(\omega+\mu-\eta)
	\notag \\
	&\hspace{1.4cm}
	\times (f(\mu)+f(\omega))
	- f(\omega)f(\mu)\big(f(\eta)+f(\omega+\mu-\eta)\big)
	\Big]
	\notag \\
	&\quad
	+ C_1 \int_{0}^{\omega} \int_{\omega}^{\omega+\mu}\,\mathrm{d}\eta\,\mathrm{d}\mu
	\frac{
		\min\{|k|(\mu), |k|(\eta), |k|(\omega+\mu-\eta), |k|(\omega)\}
	}{
		|k|(\omega)
	}
	|k|'(\mu)\,|k|'(\eta)\,|k|'(\omega+\mu-\eta)
	\notag \\
	&\hspace{1.4cm}
	\times |k|(\mu)\,|k|(\eta)\,|k|(\omega+\mu-\eta)
	(\omega\mu\eta(\omega+\mu-\eta))^{\sigma}
	\Big[
	f(\eta)f(\omega+\mu-\eta)
	\notag \\
	&\hspace{1.4cm}
	\times (f(\mu)+f(\omega))
	- f(\omega)f(\mu)\big(f(\eta)+f(\omega+\mu-\eta)\big)
	\Big].
	\notag
\end{align}

Since \(|k|\) is a monotonically increasing function, we observe that
\begin{align*}
	&\text{when } 0 \le \mu \le \omega \text{ and } 0 \le \eta \le \mu,
	\quad
	\min\{|k|(\mu), |k|(\eta), |k|(\omega+\mu-\eta), |k|(\omega)\}
	= |k|(\eta),\\[0.4em]
	&\text{when } 0 \le \mu \le \omega \text{ and } \mu \le \eta \le \omega,
	\quad
	\min\{|k|(\mu), |k|(\eta), |k|(\omega+\mu-\eta), |k|(\omega)\}
	= |k|(\mu),\\[0.4em]
	&\text{when } 0 \le \mu \le \omega \text{ and } \omega \le \eta \le \omega+\mu,
	\quad
	\min\{|k|(\mu), |k|(\eta), |k|(\omega+\mu-\eta), |k|(\omega)\}
	= |k|(\omega+\mu-\eta).
\end{align*}

Using these relations in the integral \(I_1\) yields
\begin{align}\label{1_3}
	\begin{split}
		I_1
		&= \frac{C_1}{|k|(\omega)}
		\int_{0}^{\omega} \int_{0}^{\mu}
		\,\mathrm{d}\eta\,\mathrm{d}\mu
		|k|'(\mu)\, |k|'(\eta)\, |k|'(\omega+\mu-\eta)\,
		|k|(\mu)\, |k|^2(\eta)\, |k|(\omega+\mu-\eta)
		\\
		&\hspace{2.2cm}
		\times (\omega\mu\eta(\omega+\mu-\eta))^{\sigma}
		\Big[
		f(\eta) f(\omega+\mu-\eta)(f(\mu)+f(\omega))
		\\
		&\hspace{2.2cm}
		- f(\omega) f(\mu)\big(f(\eta)+f(\omega+\mu-\eta)\big)
		\Big]
		\\[0.6em]
		&\quad + \frac{C_1}{|k|(\omega)}
		\int_{0}^{\omega} \int_{\mu}^{\omega}	\,\mathrm{d}\eta\,\mathrm{d}\mu
		|k|'(\mu)\, |k|'(\eta)\, |k|'(\omega+\mu-\eta)\,
		|k|^2(\mu)\, |k|(\eta)\, |k|(\omega+\mu-\eta)
		\\
		&\hspace{2.2cm}
		\times (\omega\mu\eta(\omega+\mu-\eta))^{\sigma}
		\Big[
		f(\eta) f(\omega+\mu-\eta)(f(\mu)+f(\omega))
		\\
		&\hspace{2.2cm}
		- f(\omega) f(\mu)\big(f(\eta)+f(\omega+\mu-\eta)\big)
		\Big]
		\\[0.6em]
		&\quad + \frac{C_1}{|k|(\omega)}
		\int_{0}^{\omega} \int_{\omega}^{\omega+\mu}\,\mathrm{d}\eta\,\mathrm{d}\mu
		|k|'(\mu)\, |k|'(\eta)\, |k|'(\omega+\mu-\eta)\,
		|k|(\mu)\, |k|(\eta)\, |k|^2(\omega+\mu-\eta)
		\\
		&\hspace{2.2cm}
		\times (\omega\mu\eta(\omega+\mu-\eta))^{\sigma}
		\Big[
		f(\eta) f(\omega+\mu-\eta)(f(\mu)+f(\omega))
		\\
		&\hspace{2.2cm}
		- f(\omega) f(\mu)\big(f(\eta)+f(\omega+\mu-\eta)\big)
		\Big].
	\end{split}
\end{align}

Next, we consider the term
\begin{align}
	I_2
	=&\; C_1 \int_{\omega}^{\infty} \int_{0}^{\omega}\,\mathrm{d}\eta\,\mathrm{d}\mu
	\frac{
		\min\{|k|(\mu), |k|(\eta), |k|(\omega+\mu-\eta), |k|(\omega)\}
	}{
		|k|(\omega)
	}
	|k|'(\mu)\,|k|'(\eta)\,|k|'(\omega+\mu-\eta)
	\notag \\
	&\hspace{1.4cm}
	\times |k|(\mu)\,|k|(\eta)\,|k|(\omega+\mu-\eta)
	(\omega\mu\eta(\omega+\mu-\eta))^{\sigma}
	\Big[
	f(\eta) f(\omega+\mu-\eta)
	\notag \\
	&\hspace{1.4cm}
	\times (f(\mu)+f(\omega))
	- f(\omega)f(\mu)\big(f(\eta)+f(\omega+\mu-\eta)\big)
	\Big]
	\notag \\
	&\quad + C_1 \int_{\omega}^{\infty} \int_{\omega}^{\mu}	\,\mathrm{d}\eta\,\mathrm{d}\mu
	\frac{
		\min\{|k|(\mu), |k|(\eta), |k|(\omega+\mu-\eta), |k|(\omega)\}
	}{
		|k|(\omega)
	}
	|k|'(\mu)\,|k|'(\eta)\,|k|'(\omega+\mu-\eta)
	\notag \\
	&\hspace{1.4cm}
	\times |k|(\mu)\,|k|(\eta)\,|k|(\omega+\mu-\eta)
	(\omega\mu\eta(\omega+\mu-\eta))^{\sigma}
	\Big[
	f(\eta) f(\omega+\mu-\eta)
	\notag \\
	&\hspace{1.4cm}
	\times (f(\mu)+f(\omega))
	- f(\omega)f(\mu)\big(f(\eta)+f(\omega+\mu-\eta)\big)
	\Big]
	\notag \\
	&\quad + C_1 \int_{\omega}^{\infty} \int_{\mu}^{\omega+\mu}	\,\mathrm{d}\eta\,\mathrm{d}\mu
	\frac{
		\min\{|k|(\mu), |k|(\eta), |k|(\omega+\mu-\eta), |k|(\omega)\}
	}{
		|k|(\omega)
	}
	|k|'(\mu)\,|k|'(\eta)\,|k|'(\omega+\mu-\eta)
	\notag \\
	&\hspace{1.4cm}
	\times |k|(\mu)\,|k|(\eta)\,|k|(\omega+\mu-\eta)
	(\omega\mu\eta(\omega+\mu-\eta))^{\sigma}
	\Big[
	f(\eta) f(\omega+\mu-\eta)
	\notag \\
	&\hspace{1.4cm}
	\times (f(\mu)+f(\omega))
	- f(\omega)f(\mu)\big(f(\eta)+f(\omega+\mu-\eta)\big)
	\Big]
.
	\notag
\end{align}

Since \(|k|\) is a monotonically increasing function, we observe that
\begin{align*}
	&\text{when } \omega \le \mu < \infty \text{ and } 0 \le \eta \le \omega,
	\quad
	\min\{|k|(\mu), |k|(\eta), |k|(\omega+\mu-\eta), |k|(\omega)\}
	= |k|(\eta),\\[0.4em]
	&\text{when } \omega \le \mu < \infty \text{ and } \omega \le \eta \le \mu,
	\quad
	\min\{|k|(\mu), |k|(\eta), |k|(\omega+\mu-\eta), |k|(\omega)\}
	= |k|(\omega),\\[0.4em]
	&\text{when } \omega \le \mu < \infty \text{ and } \mu \le \eta \le \omega+\mu,
	\quad
	\min\{|k|(\mu), |k|(\eta), |k|(\omega+\mu-\eta), |k|(\omega)\}
	= |k|(\omega+\mu-\eta).
\end{align*}

Using these relations in the integral \(I_2\) yields
\begin{align}\label{1_4}
	\allowdisplaybreaks
	\begin{split}
		I_2
		&= \frac{C_1}{|k|(\omega)}
		\int_{\omega}^{\infty} \int_{0}^{\omega}	\,\mathrm{d}\eta\,\mathrm{d}\mu
		|k|'(\mu)\, |k|'(\eta)\, |k|'(\omega+\mu-\eta)\,
		|k|(\mu)\, |k|^2(\eta)\, |k|(\omega+\mu-\eta)
		\\
		&\hspace{2.2cm}
		\times (\omega\mu\eta(\omega+\mu-\eta))^{\sigma}
		\Big[
		f(\eta) f(\omega+\mu-\eta)\big(f(\mu)+f(\omega)\big)
		\\
		&\hspace{2.2cm}
		- f(\omega) f(\mu)\big(f(\eta)+f(\omega+\mu-\eta)\big)
		\Big]
		\\[0.6em]
		&\quad + \frac{C_1}{|k|(\omega)}
		\int_{\omega}^{\infty} \int_{\omega}^{\mu}	\,\mathrm{d}\eta\,\mathrm{d}\mu
		|k|'(\mu)\, |k|'(\eta)\, |k|'(\omega+\mu-\eta)\,
		|k|(\omega)\, |k|(\mu)\, |k|(\eta)\, |k|(\omega+\mu-\eta)
		\\
		&\hspace{2.2cm}
		\times (\omega\mu\eta(\omega+\mu-\eta))^{\sigma}
		\Big[
		f(\eta) f(\omega+\mu-\eta)\big(f(\mu)+f(\omega)\big)
		\\
		&\hspace{2.2cm}
		- f(\omega) f(\mu)\big(f(\eta)+f(\omega+\mu-\eta)\big)
		\Big]
		\\[0.6em]
		&\quad + \frac{C_1}{|k|(\omega)}
		\int_{\omega}^{\infty} \int_{\mu}^{\omega+\mu}\,\mathrm{d}\eta\,\mathrm{d}\mu
		|k|'(\mu)\, |k|'(\eta)\, |k|'(\omega+\mu-\eta)\,
		|k|(\mu)\, |k|(\eta)\, |k|^2(\omega+\mu-\eta)
		\\
		&\hspace{2.2cm}
		\times (\omega\mu\eta(\omega+\mu-\eta))^{\sigma}
		\Big[
		f(\eta) f(\omega+\mu-\eta)\big(f(\mu)+f(\omega)\big)
		\\
		&\hspace{2.2cm}
		- f(\omega) f(\mu)\big(f(\eta)+f(\omega+\mu-\eta)\big)
		\Big]
		.
	\end{split}
\end{align}

Now, combining \eqref{1_3} and \eqref{1_4} in equation \eqref{1_2}, the equation \eqref{4wave} can be written in the following form:
\begin{align}\label{1_5}\allowdisplaybreaks
	\begin{split}
		\partial_t f
		=&\; \frac{C_1}{|k|(\omega)}
		\int_{0}^{\omega} \int_{0}^{\mu}
		\,\mathrm{d}\eta\,\mathrm{d}\mu
		|k|'(\mu)\,|k|'(\eta)\,|k|'(\omega+\mu-\eta)\,
		|k|(\mu)\,|k|^2(\eta)\,|k|(\omega+\mu-\eta)
		\\
		&\hspace{2.2cm}\times(\omega\mu\eta(\omega+\mu-\eta))^{\sigma}
		\Big[
		f(\eta)f(\omega+\mu-\eta)(f(\mu)+f(\omega))
		\\
		&\hspace{2.2cm}
		- f(\omega)f(\mu)(f(\eta)+f(\omega+\mu-\eta))
		\Big]
		\\[0.6em]
		&\;+\frac{C_1}{|k|(\omega)}
		\int_{0}^{\omega} \int_{\mu}^{\omega}
		\,\mathrm{d}\eta\,\mathrm{d}\mu
		|k|'(\mu)\,|k|'(\eta)\,|k|'(\omega+\mu-\eta)\,
		|k|^2(\mu)\,|k|(\eta)\,|k|(\omega+\mu-\eta)
		\\
		&\hspace{2.2cm}\times(\omega\mu\eta(\omega+\mu-\eta))^{\sigma}
		\Big[
		f(\eta)f(\omega+\mu-\eta)(f(\mu)+f(\omega))
		\\
		&\hspace{2.2cm}
		- f(\omega)f(\mu)(f(\eta)+f(\omega+\mu-\eta))
		\Big]
		\\[0.6em]
		&\;+\frac{C_1}{|k|(\omega)}
		\int_{0}^{\omega} \int_{\omega}^{\omega+\mu}
		\,\mathrm{d}\eta\,\mathrm{d}\mu
		|k|'(\mu)\,|k|'(\eta)\,|k|'(\omega+\mu-\eta)\,
		|k|(\mu)\,|k|(\eta)\,|k|^2(\omega+\mu-\eta)
		\\
		&\hspace{2.2cm}\times(\omega\mu\eta(\omega+\mu-\eta))^{\sigma-1} 
		\Big[
		f(\eta)f(\omega+\mu-\eta)(f(\mu)+f(\omega))
		\\
		&\hspace{2.2cm}
		- f(\omega)f(\mu)(f(\eta)+f(\omega+\mu-\eta))
		\Big]
		\\[0.6em]
		&\;+\frac{C_1}{|k|(\omega)}
		\int_{\omega}^{\infty}\int_{0}^{\omega}	\,\mathrm{d}\eta\,\mathrm{d}\mu
		|k|'(\mu)\,|k|'(\eta)\,|k|'(\omega+\mu-\eta)\,
		|k|(\mu)\,|k|^2(\eta)\,|k|(\omega+\mu-\eta)
		\\
		&\hspace{2.2cm}\times(\omega\mu\eta(\omega+\mu-\eta))^{\sigma}
		\Big[
		f(\eta)f(\omega+\mu-\eta)(f(\mu)+f(\omega))
		\\
		&\hspace{2.2cm}
		- f(\omega)f(\mu)(f(\eta)+f(\omega+\mu-\eta))
		\Big]
			\\[0.6em]
		&\;+\frac{C_1}{|k|(\omega)}
		\int_{\omega}^{\infty}\int_{\omega}^{\mu}
		\,\mathrm{d}\eta\,\mathrm{d}\mu
		|k|'(\mu)\,|k|'(\eta)\,|k|'(\omega+\mu-\eta)\,
		|k|(\omega)\,|k|(\mu)\,|k|(\eta)\,|k|(\omega+\mu-\eta)
		\\
		&\hspace{2.2cm}\times(\omega\mu\eta(\omega+\mu-\eta))^{\sigma}
		\Big[
		f(\eta)f(\omega+\mu-\eta)(f(\mu)+f(\omega))
		\\
		&\hspace{2.2cm}
		- f(\omega)f(\mu)(f(\eta)+f(\omega+\mu-\eta))
		\Big]
		\\[0.6em]
		&\;+\frac{C_1}{|k|(\omega)}
		\int_{\omega}^{\infty}\int_{\mu}^{\omega+\mu}\,\mathrm{d}\eta\,\mathrm{d}\mu
		|k|'(\mu)\,|k|'(\eta)\,|k|'(\omega+\mu-\eta)\,
		|k|(\mu)\,|k|(\eta)\,|k|^2(\omega+\mu-\eta)
		\\
		&\hspace{2.2cm}\times(\omega\mu\eta(\omega+\mu-\eta))^{\sigma}
		\Big[
		f(\eta)f(\omega+\mu-\eta)(f(\mu)+f(\omega))
		\\
		&\hspace{2.2cm}
		- f(\omega)f(\mu)(f(\eta)+f(\omega+\mu-\eta))
		\Big]
				\\[0.6em]
		&\; + C_2 \int_{0}^{\omega}\,\mathrm{d}\mu
		\frac{|k|(\mu)\,|k|(\omega-\mu)}{|k|(\omega)}
		|k|'(\mu)\,|k|'(\omega-\mu)\,
		(\omega\mu(\omega-\mu))^\gamma
		f(\mu)(\omega-\mu)
		\\[0.4em]
		&\; - 2C_2 \int_{0}^{\infty}\,\mathrm{d}\mu
		\frac{|k|(\mu)\,|k|(\omega+\mu)}{|k|(\omega)}
		|k|'(\mu)\,|k|'(\omega+\mu)\,
		(\omega\mu(\omega+\mu))^\gamma
		f(\omega)f(\mu)
		\\[0.4em]
		&\; - 2C_2 \int_{0}^{\omega}\,\mathrm{d}\mu
		\frac{|k|(\mu)\,|k|(\omega-\mu)}{|k|(\omega)}
		|k|'(\mu)\,|k|'(\omega-\mu)\,
		(\omega\mu(\omega-\mu))^\gamma
		f(\omega)\mu
		\\[0.4em]
		&\; + 2C_2 \int_{\omega}^{\infty}\,\mathrm{d}\mu
		\frac{|k|(\mu)\,|k|(\mu-\omega)}{|k|(\omega)}
		|k|'(\mu)\,|k|'(\mu-\omega)\,
		(\omega\mu(\mu-\omega))^\gamma
		\big[f(\omega)f(\mu)+f(\mu)f(\mu-\omega)\big]
		.
	\end{split}
\end{align}

Now substitute $\eta = \omega + \mu - \bar{\eta}$ in the third and sixth integrals, and then replace $\bar{\eta}$ by $\eta$.  
\allowdisplaybreaks
\begin{align}
	\partial_t f
	=&\; \frac{2C_1}{|k|(\omega)}
	\int_{0}^{\omega} \int_{0}^{\mu}
	\,\mathrm{d}\eta\,\mathrm{d}\mu
	|k|'(\mu)\,|k|'(\eta)\,|k|'(\omega+\mu-\eta)\,
	|k|(\mu)\,|k|^2(\eta)\,|k|(\omega+\mu-\eta)
	\notag\\
	&\hspace{2.2cm}\times(\omega\mu\eta(\omega+\mu-\eta))^{\sigma}
	\Big[
	f(\eta)f(\omega+\mu-\eta)(f(\mu)+f(\omega))
	\notag\\
	&\left.\hspace{2.2cm}
	- f(\omega)f(\mu)(f(\eta)+f(\omega+\mu-\eta))
	\right]
	\notag\\[0.5em]
	&+ \frac{C_1}{|k|(\omega)}
	\int_{0}^{\omega} \int_{\mu}^{\omega}
	\,\mathrm{d}\eta\,\mathrm{d}\mu
	|k|'(\mu)\,|k|'(\eta)\,|k|'(\omega+\mu-\eta)\,
	|k|^2(\mu)\,|k|(\eta)\,|k|(\omega+\mu-\eta)
	\notag\\
	&\hspace{2.2cm}\times(\omega\mu\eta(\omega+\mu-\eta))^{\sigma}
	\Big[
	f(\eta)f(\omega+\mu-\eta)(f(\mu)+f(\omega))
	\notag\\
	&\left.\hspace{2.2cm}
	- f(\omega)f(\mu)(f(\eta)+f(\omega+\mu-\eta))
	\right]
	\notag\\[0.5em]
	&+\frac{2C_1}{|k|(\omega)}
	\int_{\omega}^{\infty}\int_{0}^{\omega}
	\,\mathrm{d}\eta\,\mathrm{d}\mu
	|k|'(\mu)\,|k|'(\eta)\,|k|'(\omega+\mu-\eta)\,
	|k|(\mu)\,|k|^2(\eta)\,|k|(\omega+\mu-\eta)
	\notag\\
	&\hspace{2.2cm}\times(\omega\mu\eta(\omega+\mu-\eta))^{\sigma}
	\Big[
	f(\eta)f(\omega+\mu-\eta)(f(\mu)+f(\omega))
	\notag\\
	&\left.\hspace{2.2cm}
	- f(\omega)f(\mu)(f(\eta)+f(\omega+\mu-\eta))
	\right]
	\notag\\[0.5em]
	&+\frac{C_1}{|k|(\omega)}
	\int_{\omega}^{\infty}\int_{\omega}^{\mu}\,\mathrm{d}\eta\,\mathrm{d}\mu
	|k|'(\mu)\,|k|'(\eta)\,|k|'(\omega+\mu-\eta)\,
	|k|(\omega)\,|k|(\mu)\,|k|(\eta)\,|k|(\omega+\mu-\eta)
	\notag\\
	&\hspace{2.2cm}\times(\omega\mu\eta(\omega+\mu-\eta))^{\sigma}
	\Big[
	f(\eta)f(\omega+\mu-\eta)(f(\mu)+f(\omega))
	\notag\\
	&\left.\hspace{2.2cm}
	- f(\omega)f(\mu)(f(\eta)+f(\omega+\mu-\eta))
	\right]
		\notag\\[0.5em]
	&+ C_2 \int_{0}^{\omega}\,\mathrm{d}\mu
	\frac{|k|(\mu)\,|k|(\omega-\mu)}{|k|(\omega)}
	|k|'(\mu)\,|k|'(\omega-\mu)\,
	(\omega\mu(\omega-\mu))^\gamma\,
	f(\mu)\,(\omega-\mu)
	\notag\\
	&- 2C_2 \int_{0}^{\infty}\,\mathrm{d}\mu
	\frac{|k|(\mu)\,|k|(\omega+\mu)}{|k|(\omega)}
	|k|'(\mu)\,|k|'(\omega+\mu)\,
	(\omega\mu(\omega+\mu))^\gamma\,
	f(\omega)f(\mu)
	\notag\\
	&- 2C_2 \int_{0}^{\omega}\,\mathrm{d}\mu
	\frac{|k|(\mu)\,|k|(\omega-\mu)}{|k|(\omega)}
	|k|'(\mu)\,|k|'(\omega-\mu)\,
	(\omega\mu(\omega-\mu))^\gamma\,
	f(\omega)\,\mu
	\notag\\
	&+ 2C_2 \int_{\omega}^{\infty}\,\mathrm{d}\mu
	\frac{|k|(\mu)\,|k|(\mu-\omega)}{|k|(\omega)}
	|k|'(\mu)\,|k|'(\mu-\omega)\,
	(\omega\mu(\mu-\omega))^\gamma\,
	\big[f(\omega)f(\mu)+f(\mu)f(\mu-\omega)\big]
	.
	\notag
\end{align}

The above equation can be rewritten as
\begin{align}\label{1_6}
	\begin{split}
		\partial_t f
		=&\int_{0}^{\omega} \int_{\mu}^{\omega}
		\,\mathrm{d}\eta\,\mathrm{d}\mu
		\mathcal{K}_1(\omega,\mu,\eta)
		\Big[
		f(\eta)f(\omega+\mu-\eta)\big(f(\mu)+f(\omega)\big)
		\\
		&\hspace{2.2cm}
		- f(\omega)f(\mu)\big(f(\eta)+f(\omega+\mu-\eta)\big)
		\Big]
		\\[0.5em]
		&+\int_{0}^{\infty} \int_{0}^{\omega}
		\,\mathrm{d}\eta\,\mathrm{d}\mu
		\mathcal{K}_2(\omega,\mu,\eta)
		\Big[
		f(\eta)f(\omega+\mu-\eta)\big(f(\mu)+f(\omega)\big)
		\\
		&\hspace{2.2cm}
		- f(\omega)f(\mu)\big(f(\eta)+f(\omega+\mu-\eta)\big)
		\Big]
		\\[0.5em]
		&+\int_{\omega}^{\infty} \int_{\omega}^{\mu}
		\,\mathrm{d}\eta\,\mathrm{d}\mu
		\mathcal{K}_3(\omega,\mu,\eta)
		\Big[
		f(\eta)f(\omega+\mu-\eta)\big(f(\mu)+f(\omega)\big)
		\\
		&\hspace{2.2cm}
		- f(\omega)f(\mu)\big(f(\eta)+f(\omega+\mu-\eta)\big)
		\Big]
		\\[0.5em]
		&+\int_{0}^{\omega}\,\mathrm{d}\mu
		\mathcal{K}_4(\omega,\mu)\, f(\mu)\,(\omega-\mu)
		\;-\; \int_{0}^{\infty}\,\mathrm{d}\mu
		\mathcal{K}_5(\omega,\mu)\, f(\omega)f(\mu)
		\\[0.5em]
		&-\int_{0}^{\omega}\,\mathrm{d}\mu
		\mathcal{K}_6(\omega,\mu)\, f(\omega)\,\mu
		+\int_{\omega}^{\infty}	\,\mathrm{d}\mu 
		\mathcal{K}_7(\omega,\mu)\big[f(\omega)f(\mu)+f(\mu)f(\mu-\omega)\big]
	.
	\end{split}
\end{align}

where
\begin{align*}
	\mathcal{K}_1(\omega,\mu,\eta)
	&=
	\frac{C_1}{|k|(\omega)}
	|k|'(\mu)|k|'(\eta)|k|'(\omega+\mu-\eta)
	\,|k|(\mu)|k|(\eta)|k|(\omega+\mu-\eta)
	\\[-0.2em]
	&\qquad \times (|k|(\mu)-2|k|(\eta))\,
	(\omega\mu\eta(\omega+\mu-\eta))^{\sigma},
	\\[0.5em]
	\mathcal{K}_2(\omega,\mu,\eta)
	&=
	\frac{2C_1}{|k|(\omega)}
	|k|'(\mu)|k|'(\eta)|k|'(\omega+\mu-\eta)
	\,|k|(\mu)|k|^2(\eta)|k|(\omega+\mu-\eta)
	\\[-0.2em]
	&\qquad \times (\omega\mu\eta(\omega+\mu-\eta))^{\sigma},
	\\[0.5em]
	\mathcal{K}_3(\omega,\mu,\eta)
	&=
	C_1
	|k|'(\mu)|k|'(\eta)|k|'(\omega+\mu-\eta)
	\,|k|(\mu)|k|(\eta)|k|(\omega+\mu-\eta)
	\\[-0.2em]
	&\qquad \times (\omega\mu\eta(\omega+\mu-\eta))^{\sigma},
	\\[0.5em]
	\mathcal{K}_4(\omega,\mu)
	&=
	\frac{C_2|k|(\mu)|k|(\omega-\mu)}{|k|(\omega)}
	|k|'(\mu)|k|'(\omega-\mu)
	(\omega\mu(\omega-\mu))^{\gamma},
	\\[0.5em]
	\mathcal{K}_5(\omega,\mu)
	&=
	\frac{2C_2|k|(\mu)|k|(\omega+\mu)}{|k|(\omega)}
	|k|'(\mu)|k|'(\omega+\mu)
	(\omega\mu(\omega+\mu))^{\gamma},
	\\[0.5em]
	\mathcal{K}_6(\omega,\mu)
	&=
	\frac{2C_2|k|(\mu)|k|(\omega-\mu)}{|k|(\omega)}
	|k|'(\mu)|k|'(\omega-\mu)
	(\omega\mu(\omega-\mu))^{\gamma},
	\\[0.5em]
	\mathcal{K}_7(\omega,\mu)
	&=
	\frac{2C_2|k|(\mu)|k|(\mu-\omega)}{|k|(\omega)}
	|k|'(\mu)|k|'(\mu-\omega)
	(\omega\mu(\mu-\omega))^{\gamma}.
\end{align*}
And the initial condition is now $f^{\mathrm{in}}(\omega)$.
	
	\section{Formulation of the numerical scheme}
The goal of this section is to construct the numerical scheme \eqref{2_12} for \eqref{1_6}.  To develop an efficient finite-volume scheme, the computational domain is truncated to 
\(\mathcal{D} := (0, R]\), where \(R < \infty\). The mixed wave equation \eqref{1_6} is then written in the following form:
\[
\begin{aligned}
	\partial_t f ={}& \int_{0}^{\omega} \int_{\mu}^{\omega}\,\mathrm{d}\eta\,\mathrm{d}\mu 
	\mathcal{K}_1(\omega,\mu,\eta)\Big[
	f(\eta) f(\omega + \mu - \eta)\big(f(\mu)+f(\omega)\big) \\
	&\qquad\qquad\qquad\qquad 
	- f(\omega) f(\mu)\big(f(\eta)+f(\omega + \mu - \eta)\big)
	\Big] \\
	&+ \int_{0}^{R-\omega} \int_{0}^{\omega} \,\mathrm{d}\eta\,\mathrm{d}\mu 
	\mathcal{K}_2(\omega,\mu,\eta)\Big[
	f(\eta) f(\omega + \mu - \eta)\big(f(\mu)+f(\omega)\big) \\
	&\qquad\qquad\qquad\qquad
	- f(\omega) f(\mu)\big(f(\eta)+f(\omega + \mu - \eta)\big)
	\Big]\\
	&+ \int_{\omega}^{R} \int_{\omega}^{\mu}\,\mathrm{d}\eta\,\mathrm{d}\mu
	\mathcal{K}_3(\omega,\mu,\eta)\Big[
	f(\eta) f(\omega + \mu - \eta)\big(f(\mu)+f(\omega)\big) \\
	&\qquad\qquad\qquad\qquad
	- f(\omega) f(\mu)\big(f(\eta)+f(\omega + \mu - \eta)\big)
	\Big] \\
	&+ \int_{0}^{\omega}\, \mathrm{d}\mu \mathcal{K}_4(\omega,\mu)\, f(\mu) f(\omega-\mu)
	- \int_{0}^{R} \mathcal{K}_5(\omega,\mu)\, f(\omega) f(\mu)\, \mathrm{d}\mu \\
	&- \int_{0}^{\omega}\, \mathrm{d}\mu \mathcal{K}_6(\omega,\mu)\, f(\omega) f(\mu)
	+ \int_{\omega}^{R} \, \mathrm{d}\mu\mathcal{K}_7(\omega,\mu)\, f(\omega) f(\mu) \\
	&+ \int_{\omega}^{R}\, \mathrm{d}\mu  \mathcal{K}_7(\omega,\mu)\, f(\mu) f(\mu-\omega).
\end{aligned}
\]

It is worth noting that taking the limit \(R \to \infty\) in the above truncation recovers the original explicit form of the mixed wave equation \eqref{1_6}, provided that the wave kernels are chosen appropriately. We refer the reader to the work of Stewart \cite{stewart1989global}, where this type of integral convergence has been rigorously established within the framework of coagulation–fragmentation equations.

To formulate the semi-discrete numerical scheme, the computational domain is divided into \(I\) subintervals, denoted by \(\Lambda_i := [\omega_{i-1/2}, \omega_{i+1/2}]\). The representative point of the \(i\)-th cell is defined as
\[
\omega_{1/2} = 0,\quad \omega_{I+1/2} = R,\quad 
\omega_i = \frac{\omega_{i-1/2} + \omega_{i+1/2}}{2}.
\]
The representative point of each cell is sometimes referred to as a \emph{grid point} or \emph{pivot}. The width of the \(i\)-th cell is
\[
\Delta\omega_i = \omega_{i+1/2} - \omega_{i-1/2}.
\]
For the convergence analysis of the proposed scheme, we assume that
\[
\Delta\omega_{\min} \le \Delta\omega_i \le \Delta\omega.
\]

Integrating with respect to \(\omega\) over the subinterval 
\([\omega_{i-1/2}, \omega_{i+1/2}]\) yields the semi-discrete scheme in 
\(\mathbb{R}^I\):
\begin{align}\label{2_1}
	\frac{\mathrm{d} \mathbf{N}}{\mathrm{d} t} = 
	\sum_{k=1}^{8} \mathbf{F}^k(\mathbf{N}), 
	\qquad 
	\mathbf{N}(0) = \mathbf{N}^{\mathrm{in}}.
\end{align}
The \(i\)-th components of the vectors 
\(\mathbf{N} \in \mathbb{R}^I\), 
\(\mathbf{N}^{\mathrm{in}} \in \mathbb{R}^I\), 
and \(\mathbf{F}^k \in \mathbb{R}^I\) \((k=1,\dots,8)\) are given by
\begin{align}
	N_i(t) &= \int_{\omega_{i-1/2}}^{\omega_{i+1/2}} \,\mathrm{d}\omega
	f(\omega,t),
	\qquad
	N^{\mathrm{in}}_i = \int_{\omega_{i-1/2}}^{\omega_{i+1/2}} \,\mathrm{d}\omega
	f^{\mathrm{in}}(\omega), \label{2_2} \\[0.3em]
	F^1_i(t) &= \int_{\omega_{i-1/2}}^{\omega_{i+1/2}}
	\int_{0}^{\omega} \int_{\mu}^{\omega}
	\,\mathrm{d}\eta\,\mathrm{d}\mu\,\mathrm{d}\omega
	\mathcal{K}_1(\omega,\mu,\eta)
	\Big[
	f(\eta) f(\omega+\mu-\eta)\big(f(\mu)+f(\omega)\big)
	\notag\\
	&\hspace{4cm}
	- f(\omega)f(\mu)\big(f(\eta)+f(\omega+\mu-\eta)\big)
	\Big], \label{2_3} \\[0.3em]
	F^2_i(t) &= \int_{\omega_{i-1/2}}^{\omega_{i+1/2}}
	\int_{\omega}^{R} \int_{0}^{\omega}
	\,\mathrm{d}\eta\,\mathrm{d}\mu\,\mathrm{d}\omega
	\mathcal{K}_2(\omega,\mu-\omega,\eta)
	\Big[
	f(\eta) f(\mu-\eta)\big(f(\mu-\omega)+f(\omega)\big)
	\notag\\
	&\hspace{4cm}
	- f(\omega)f(\mu-\omega)\big(f(\eta)+f(\mu-\eta)\big)
	\Big], \label{2_4} \\[0.3em]
	F^3_i(t) &= \int_{\omega_{i-1/2}}^{\omega_{i+1/2}}
	\int_{\omega}^{R} \int_{\omega}^{\mu}\,\mathrm{d}\eta\,\mathrm{d}\mu\,\mathrm{d}\omega
	\mathcal{K}_3(\omega,\mu,\eta)
	\Big[
	f(\eta) f(\omega+\mu-\eta)\big(f(\mu)+f(\omega)\big)
	\notag\\
	&\hspace{4cm}
	- f(\omega)f(\mu)\big(f(\eta)+f(\omega+\mu-\eta)\big)
	\Big]
	, \label{2_5} \\[0.3em]
	F^4_i(t) &= \int_{\omega_{i-1/2}}^{\omega_{i+1/2}}
	\int_{0}^{\omega}\,\mathrm{d}\mu\,\mathrm{d}\omega \mathcal{K}_4(\omega,\mu)\,
	f(\mu) f(\omega-\mu), \label{2_6} \\[0.3em]
	F^5_i(t) &= \int_{\omega_{i-1/2}}^{\omega_{i+1/2}}
	\int_{0}^{R}\,\mathrm{d}\mu\,\mathrm{d}\omega \mathcal{K}_5(\omega,\mu)\,
	f(\omega) f(\mu), \label{2_7} \\[0.3em]
	F^6_i(t) &= \int_{\omega_{i-1/2}}^{\omega_{i+1/2}}
	\int_{0}^{\omega}\,\mathrm{d}\mu\,\mathrm{d}\omega \mathcal{K}_6(\omega,\mu)\,
	f(\omega) f(\mu), \label{2_8} \\[0.3em]
	F^7_i(t) &= \int_{\omega_{i-1/2}}^{\omega_{i+1/2}}
	\int_{\omega}^{R} \,\mathrm{d}\mu\,\mathrm{d}\omega\mathcal{K}_7(\omega,\mu)\,
	f(\omega) f(\mu), \label{2_9} \\[0.3em]
	F^8_i(t) &= \int_{\omega_{i-1/2}}^{\omega_{i+1/2}}
	\int_{\omega}^{R}\,\mathrm{d}\mu\,\mathrm{d}\omega \mathcal{K}_7(\omega,\mu)\,
	f(\mu) f(\mu-\omega). \label{2_10}
\end{align}

The wave density function \(f(t,\omega)\) can be approximated in terms of the
numerical wave densities \(N_i(t)\) as
\begin{align}\label{2_11}
	f(t,\omega) \approx \sum_{i=1}^{I} N_i(t)\,(\omega - \omega_i).
\end{align}

Discretizing the frequency variable \(\omega\) leads to a discrete form of the
wave–kinetic kernels, which can be written as
\begin{align}
	\begin{aligned}\label{kernel_dis}
		&\mathcal{K}_k(\omega,\mu,\eta)
		\approx
		\mathcal{K}^k(\omega_i,\omega_j,\omega_k)
		= \mathcal{K}^k_{i,j,k},
		\qquad k = 1,2,3,4, \\[0.3em]
		&\mathcal{K}_k(\omega,\mu)
		\approx
		\mathcal{K}^k(\omega_i,\omega_j)
		= \mathcal{K}^k_{i,j},
		\qquad k = 4,5,6,7,
	\end{aligned}
\end{align}
whenever \(\omega \in \Lambda_i\), \(\mu \in \Lambda_j\), and \(\eta \in \Lambda_k\).

To formulate the finite volume scheme for the mixed wave kinetic equation, 
we first define the required set of indices:
\begin{align*}
	&\mathcal{I}^i(j,k) := 
	\{(j,k)\in \mathbb{N}\times\mathbb{N}:
	\omega_{i-1/2}\le \omega_j+\omega_k< \omega_{i+1/2}\}, \\[0.3em]
	&\mathcal{J}^i(j,k) := 
	\{(j,k)\in \mathbb{N}\times\mathbb{N}:
	\omega_{i-1/2}\le \omega_j-\omega_k< \omega_{i+1/2}\}, \\[0.3em]
	&\tilde{\theta}^i_{j}(k) := 
	\{k \in \mathbb{N}:
	\omega_{i-1/2}\le \omega_j+\omega_k< \omega_{i+1/2},
	~\text{for some given } (i,j)\in \mathbb{N}\times\mathbb{N}\}, \\[0.3em]
	&\bar{\theta}^i_{jk}(l) := 
	\{l \in \mathbb{N}:
	\omega_{i-1/2}\le \omega_l+\omega_j-\omega_k< \omega_{i+1/2},
	~\text{for some given } (j,k)\in \mathbb{N}\times\mathbb{N}\}, \\[0.3em]
	&\hat{\theta}^i_{jk}(l) := 
	\{l \in \mathbb{N}:
	\omega_{i-1/2}\le \omega_j+\omega_k-\omega_l< \omega_{i+1/2},
	~\text{for some given } (j,k)\in \mathbb{N}\times\mathbb{N}\}.
\end{align*}

Using the previously defined index notation and the approximation \eqref{2_11},
the numerical finite volume scheme (FVS) can be expressed as
\begin{align}\label{2_12}
	\begin{split}
		\allowdisplaybreaks
		\frac{\mathrm{d} N_i}{\mathrm{d} t} &=  
		\sum_{j=1}^{i-2} \sum_{k=j+1}^{i-1} 
		\sum_{\bar{\theta}^i_{kj}(l)}
		\mathcal{K}^1_{l+k-j,j,k} N_j N_k N_l
		+ \sum_{j=2}^{i-1}\sum_{k=1}^{j-1} 
		\sum_{\bar{\theta}^k_{ji}(l)} 
		\mathcal{K}^1_{i,l+j-i,j} N_i N_j N_l \\
		&\quad - \sum_{j=1}^{i-2} \sum_{k=j+1}^{i-1}  
		\mathcal{K}^1_{i,j,k} N_i N_j N_k
		- \sum_{j=1}^{i-2} \sum_{k=j+1}^{i-1} 
		\sum_{\hat{\theta}^k_{ij}(l)} 
		\mathcal{K}^1_{i,j,i+j-l} N_i N_j N_l \\
		&\quad + \sum_{j=i+1}^{I} \sum_{k=1}^{i-1} 
		\sum_{\tilde{\theta}^j_k(l)} 
		\sum_{\hat{\theta}^i_{lk}(m)} 
		\mathcal{K}^2_{l+k-m,m,k} N_k N_l N_m \\
		&\quad + \sum_{j=i+1}^{I} \sum_{k=1}^{i-1} 
		\sum_{\tilde{\theta}^j_k(l)} 
		\mathcal{K}^2_{i,l+k-i,i} N_i N_k N_l \\
		&\quad - \sum_{j=i+1}^{I} \sum_{k=1}^{i-1} 
		\sum_{\tilde{\theta}^j_i(l)} 
		\mathcal{K}^2_{i,l,k} N_i N_k N_l \\
		&\quad - \sum_{j=i+1}^{I} \sum_{k=1}^{i-1} 
		\sum_{\tilde{\theta}^j_i(l)} 
		\sum_{\hat{\theta}^k_{li}(m)}  
		\mathcal{K}^2_{i,l,i+l-m} N_i N_l N_m 	\end{split}
	\end{align}
\begin{align*}
	\begin{split}
		&\quad + \sum_{j=i+2}^{I} \sum_{k=i+1}^{j-1} 
		\sum_{\bar{\theta}^i_{kj}(l)}  
		\mathcal{K}^3_{l+k-j,j,k} N_j N_k N_l \\
		&\quad + \sum_{j=i+1}^{I} \sum_{k=j+1}^{I} 
		\sum_{\bar{\theta}^k_{ji}(m)} 
		\mathcal{K}^3_{i,m+j-i,j} N_i N_j N_m \\
		&\quad - \sum_{j=i+2}^{I} \sum_{k=i+1}^{j-1}  
		\mathcal{K}^3_{i,j,k} N_i N_j N_k \\
		&\quad - \sum_{j=i+2}^{I} \sum_{k=i+1}^{j-1} 
		\sum_{\hat{\theta}^k_{ij}(l)} 
		\mathcal{K}^3_{i,j,i+j-l} N_i N_j N_l \\
		&\quad + \sum_{\mathcal{I}^i(j,k)} 
		\mathcal{K}^4_{k+j,j} N_j N_k
		- \sum_{j=1}^{I} \mathcal{K}^5_{i,j} N_i N_j 
		- \sum_{j=1}^{i-1} \mathcal{K}^6_{i,j} N_i N_j \\
		&\quad + \sum_{j=i+1}^{I} \mathcal{K}^7_{i,j} N_i N_j
		+ \sum_{\mathcal{J}^i(j,k)} \mathcal{K}^7_{j-k,j} N_j N_k.
	\end{split}
\end{align*}

	\section{Convergence analysis of the numerical scheme}
The numerical scheme \eqref{2_12} can first be reformulated in vector form as
\begin{align}\label{3_1}
	\frac{\mathrm{d}\hat{\mathbf{N}}}{\mathrm{d} t} = \hat{\mathbf{J}}(\hat{\mathbf{N}}).
\end{align}
Here, $\hat{\mathbf{N}} := \{\hat{N}_1, \hat{N}_2, \dots, \hat{N}_I\}$ represents the
approximate vector solution that satisfies
\begin{align}\label{3_2}
	f(t,\omega)
	= \sum_{i=1}^{I} N_i(t)\,\delta(\omega - \omega_i)
	+ \mathcal{O}(\Delta\omega^{3}).
\end{align}
	 Similarly, let $\ds\mathbf{ \hat{J}} := \{\hat{J}_1, \hat{J}_2, \cdot\cdot\cdot, \hat{J}_I\}$ be a vector in $\mathbb{R}^I$, whose $i-$th component is defined by
	\begin{align}\label{3_2.1}
		\hat{J}_i\left(\mathbf{\hat{N}}\right) = \sum_{k=1}^{17} \mathcal{Q}_i^k \left(\mathbf{\hat{N}}\right),
	\end{align}
	where
	\begin{align}\label{3_3}
		\begin{split}
			& \mathcal{Q}_i^1 \left(\mathbf{\hat{N}}\right):= \sum_{j=1}^{i-2} \sum_{k=j+1}^{i-1} \sum_{ \bar{\theta}^i_{kj}(l)}\mathcal{K}^1_{l+k-j,j,k} N_j N_k N_l, \quad \mathcal{Q}_i^2 \left(\mathbf{\hat{N}}\right):=\sum_{j=2}^{i-1}\sum_{k=1}^{j-1} \sum_{\bar{\theta}^k_{ji}(l)} \mathcal{K}^1_{i,l+j-i,j} N_i N_j N_l,\\
			&\mathcal{Q}_i^3 \left(\mathbf{\hat{N}}\right):= - \sum_{j=1}^{i-2}\sum_{k=j+1}^{i-1}  \mathcal{K}^1_{i,j,k} N_i N_j N_k, \quad \mathcal{Q}_i^4 \left(\mathbf{\hat{N}}\right):=  - \sum_{j=1}^{i-2} \sum_{k=j+1}^{i-1} \sum_{\hat{\theta}^k_{ij}(l)} \mathcal{K}^1_{i,j,i+j-l}N_i N_j N_l\\
			&  \mathcal{Q}_i^5 \left(\mathbf{\hat{N}}\right):= \sum_{j=i+1}^{I} \sum_{k=1}^{i-1} \sum_{\tilde{\theta}^j_k(l)} \sum_{\hat{\theta}^i_{lk}(m)} \mathcal{K}^2_{l+k-m,m,k} N_k N_l N_m \quad  \mathcal{Q}_i^6 \left(\mathbf{\hat{N}}\right):=  \sum_{j=i+1}^{I} \sum_{k=1}^{i-1} \sum_{\tilde{\theta}^j_k(l)} \mathcal{K}^2_{i,l+k-i,i} N_i N_k N_l,\\
			&\mathcal{Q}_i^7 \left(\mathbf{\hat{N}}\right):= - \sum_{j=i+1}^{I} \sum_{k=1}^{i-1} \sum_{\tilde{\theta}^j_i(l)} \mathcal{K}^2_{i,l,k} N_i N_k N_l, \quad \mathcal{Q}_i^8 \left(\mathbf{\hat{N}}\right):= -  \sum_{j=i+1}^{I} \sum_{k=1}^{i-1} \sum_{\tilde{\theta}^j_i(l)}  \sum_{\hat{\theta}^k_{li}(m)}  \mathcal{K}^2_{i,l,i+l-m} N_i N_l N_m,\\
			&   \mathcal{Q}_i^9 \left(\mathbf{\hat{N}}\right):= \sum_{j=i+2}^{I} \sum_{k=i+1}^{j-1} \sum_{\bar{\theta}^i_{kj}(l)}  \mathcal{K}^3_{l+k-j,j,k}N_j N_k N_l, \quad  \mathcal{Q}_i^{10} \left(\mathbf{\hat{N}}\right):= \sum_{j=i+1}^{I} \sum_{k=j+1}^{I} \sum_{\bar{\theta}^k_{ji}(m)} \mathcal{K}^3_{i,m+j-i,j} N_i N_j N_m,\\
			& \mathcal{Q}_i^{11} \left(\mathbf{\hat{N}}\right):= - \sum_{j=i+2}^{I} \sum_{k=i+1}^{j-1}  \mathcal{K}^3_{i,j,k}N_i N_j N_k,\quad\mathcal{Q}_i^{12} \left(\mathbf{\hat{N}}\right):= -  \sum_{j=i+2}^{I} \sum_{k=i+1}^{j-1} \sum_{\hat{\theta}^k_{ij}(l)} \mathcal{K}^3_{i,j,i+j-l}N_i N_j N_l,\\
			&\mathcal{Q}_i^{13} \left(\mathbf{\hat{N}}\right):= \sum_{ \mathcal{I}_{j,k}^i\left(j,k\right)} \mathcal{K}^4_{k+j,j}N_j N_k, \quad \mathcal{Q}_i^{14} \left(\mathbf{\hat{N}}\right):= -  \sum_{j=1}^{I} \mathcal{K}^5_{i,j}N_i N_j,\quad\mathcal{Q}_i^{15} \left(\mathbf{\hat{N}}\right):= - \sum_{j=1}^{i-1} \mathcal{K}^6_{i,j}N_i N_j \\
			& \mathcal{Q}_i^{16} \left(\mathbf{\hat{N}}\right):= \sum_{j=i+1}^{I} \mathcal{K}^7_{i,j} N_i N_j \quad \mathcal{Q}_i^{17} \left(\mathbf{\hat{N}}\right):=  \sum_{ \mathcal{J}_{j,k}^i\left(j,k\right)} \mathcal{K}^7_{j-k,j} N_j N_k.
		\end{split}
	\end{align}
	
For the convergence analysis of the proposed scheme, the discrete $L^1$ norm on 
$\mathbb{R}^I$ is defined by
\begin{align}\label{3_4}
	\|N(t)\| = \sum_{i=1}^{I} |N_i(t)| .
\end{align}

Moreover, for the purpose of the convergence analysis, we assume that the kinetic
kernels satisfy
\begin{align}\label{3_5}
	\mathcal{K}_i(\omega,\mu,\eta) \in \mathcal{C}^2([0,R]^3)
	\quad \text{for } i = 1,2,3,
	\qquad\text{and}\qquad
	\mathcal{K}_i(\omega,\mu) \in \mathcal{C}^2([0,R]^2)
	\quad \text{for } i = 4,\dots,7.
\end{align}

for any given function $|k|$ with a suitable choice of $\sigma$ and $\gamma$.

In order to prove the convergence of the proposed numerical scheme, we make use 
of several definitions and results from \cite{hundsdorfer2013numerical,linz1975convergence}.

	\begin{defn}\label{def_1}
	The spatial discretization error is defined as the residual obtained by substituting the exact solution 
	\(\mathbf{N} := \{N_1, N_2, \cdots, N_I\}\) into the discrete system:
	\begin{align}\label{3_6}
		\epsilon(t) = \frac{\dd \mathbf{N}}{\dd t} - \hat{\mathbf{J}}(\mathbf{N}).
	\end{align}
	
	We say that the numerical scheme \eqref{3_1} is $p$th--order convergent if, as $\Delta \omega \to 0$,
	\begin{align}\label{3_7}
		\|\epsilon(t)\| = \mathcal{O}(\Delta \omega^p),
		\qquad \text{uniformly for all } 0 \le t \le T.
	\end{align}
	
	\end{defn}
	
We begin the convergence analysis by proving that the mapping $\mathbf{\hat{J}}$ satisfies a Lipschitz condition with a mesh-independent Lipschitz constant. To this end, we first establish the following proposition, which provides a preliminary estimate of the norm in $L^1(0,T)$.

	\begin{Proposition}\label{Prop_1}
	Assume that the wave kinetic kernels $\mathcal{K}_i$ satisfy the condition \eqref{3_5}. 
	Then there exists a positive constant $\mathcal{L}(T)$, independent of the mesh, such that 
	\[
	\|\mathbf{N}(t)\| \le \mathcal{L}(T) \qquad \text{for all } 0 \le t \le T.
	\]
	
	\end{Proposition}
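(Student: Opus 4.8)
\medskip
\noindent\textbf{Strategy of the proof.} The plan is to close a differential inequality for the discrete $L^{1}$ norm \eqref{3_4} and integrate it by comparison with a scalar ODE. Since $\|\mathbf{N}(t)\|=\sum_{i=1}^{I}|N_{i}(t)|$, one has $\frac{\dd}{\dd t}\|\mathbf{N}(t)\|\le\sum_{i=1}^{I}\bigl|\frac{\dd}{\dd t}N_{i}(t)\bigr|=\sum_{i=1}^{I}|\hat{J}_{i}(\mathbf{N}(t))|\le\sum_{i=1}^{I}\sum_{k=1}^{17}|\mathcal{Q}_{i}^{k}(\mathbf{N}(t))|$ by \eqref{3_2.1}, so no sign information on $\mathbf{N}$ is needed (if the scheme's nonnegativity is available one may instead work with $\|\mathbf{N}\|=\sum_{i}N_{i}$ and drop the absolute values). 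The regularity assumption \eqref{3_5} together with the compactness of $[0,R]^{3}$ and $[0,R]^{2}$ provides a single mesh-independent constant $\bar{\mathcal{K}}=\bar{\mathcal{K}}(R,\sigma,\gamma,|k|)$ with $|\mathcal{K}^{k}_{i,j,k}|\le\bar{\mathcal{K}}$ and $|\mathcal{K}^{k}_{i,j}|\le\bar{\mathcal{K}}$ for every admissible multi-index.

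\medskip
The heart of the matter is a counting remark about the index families in \eqref{3_3}. Because $\omega\mapsto\pm\omega$ is strictly monotone and the cells $\{\Lambda_{i}\}$ partition $(0,R]$, each of the sets $\bar{\theta}^{\,\bullet}_{\bullet\bullet}(\cdot)$, $\hat{\theta}^{\,\bullet}_{\bullet\bullet}(\cdot)$, $\tilde{\theta}^{\,\bullet}_{\bullet}(\cdot)$ contains at most one value of its free index once the others are fixed, and likewise the inner index of $\mathcal{I}^{i}(j,k)$ and of $\mathcal{J}^{i}(j,k)$ is a function of the outer one. Interchanging the order of summation in $\sum_{i}\mathcal{Q}_{i}^{k}$, every nested sum thus collapses to an unconstrained sum over only two or three of the mode indices, the constrained reindexing $i\mapsto l+k-j$ (and its analogues) being a bijection of index ranges up to an $\mathcal{O}(1)$-wide layer near $\omega=R$. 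Bounding each kernel by $\bar{\mathcal{K}}$ and each $|N_{j}N_{k}N_{l}|$ by $|N_{j}|\,|N_{k}|\,|N_{l}|$ then gives $\sum_{i}|\mathcal{Q}_{i}^{k}(\mathbf{N})|\le c_{1}\bar{\mathcal{K}}\|\mathbf{N}\|^{3}$ for the cubic blocks $k=1,\dots,12$ (those built from $\mathcal{K}^{1},\mathcal{K}^{2},\mathcal{K}^{3}$) and $\sum_{i}|\mathcal{Q}_{i}^{k}(\mathbf{N})|\le c_{2}\bar{\mathcal{K}}\|\mathbf{N}\|^{2}$ for the quadratic blocks $k=13,\dots,17$, with absolute constants $c_{1},c_{2}$. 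Hence $\frac{\dd}{\dd t}\|\mathbf{N}(t)\|\le\Phi(\|\mathbf{N}(t)\|)$ with $\Phi(y)=12\,c_{1}\bar{\mathcal{K}}\,y^{3}+5\,c_{2}\bar{\mathcal{K}}\,y^{2}$, a polynomial whose coefficients do not depend on the mesh, and a comparison with $\dot{z}=\Phi(z)$, $z(0)=\|\mathbf{N}^{\mathrm{in}}\|$ (a standard device, cf.\ \cite{hundsdorfer2013numerical}), yields the mesh-independent majorant $\|\mathbf{N}(t)\|\le z(t)=:\mathcal{L}(T)$ on $[0,T]$. This can be sharpened: the $\mathcal{K}^{1}$--$\mathcal{K}^{3}$ block is a finite-volume discretization of the particle-number-conserving operator $C_{22}$, so its gain and loss parts telescope under $\sum_{i}$, the leftover being an $\mathcal{O}(\Delta\omega)$ multiple of a cubic quantity plus the non-positive flux out through $\omega=R$; thus $\Phi$ may be taken quadratic, and if in addition the quadratic $C_{12}$ block telescopes to a term of size $\mathcal{O}(\|\mathbf{N}\|)$ one obtains an affine $\Phi(y)=c\,(1+y)$, in which case Gronwall's lemma gives $\mathcal{L}(T)=(1+\|\mathbf{N}^{\mathrm{in}}\|)\,e^{cT}$ for every $T>0$.

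\medskip
The principal obstacle is the combinatorial bookkeeping behind these reductions. Across the seventeen terms and the five families of index sets one must verify, case by case, that the constrained index is genuinely a singleton; that the induced change of summation variable is a true bijection between the relevant ranges, with the mismatch confined to a bounded strip adjacent to $\omega=R$; and, for the sharpened statement, that after the $C_{22}$-telescoping the surviving kernel differences $\mathcal{K}^{k}_{l+k-j,j,k}-\mathcal{K}^{k}_{i,j,k}$---which are $\mathcal{O}(\Delta\omega)$ by the $\mathcal{C}^{1}$ control inherent in \eqref{3_5}---multiply only sums that remain bounded as $\Delta\omega\to0$, so that mesh-independence is preserved. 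The quadratic $C_{12}$ block and the boundary terms at $\omega=R$ carry essentially all of the sign tracking; once these are organized, what remains is a routine combination of the boundedness of $\mathcal{C}^{2}$ kernels with the comparison principle for scalar ordinary differential equations.
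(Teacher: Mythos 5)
Your proposal is correct and follows essentially the same route as the paper's proof: sum the scheme over $i$, use the $\mathcal{C}^2$ (hence uniformly bounded) kernels on the compact truncated domain together with the collapse of the constrained index sums to bound each $\sum_{i}\mathcal{Q}^k_i$ by a power of $\|\mathbf{N}\|$, and close a polynomial differential inequality for $\|\mathbf{N}(t)\|$ by comparison with a scalar ODE. Your refinements (working with absolute values so no nonnegativity is assumed, separating the quadratic blocks $k=13,\dots,17$ from the cubic ones, and the optional telescoping sharpening) only tighten what the paper does with a single cubic bound $\frac{\dd}{\dd t}\|\mathbf{N}\|\le\mathcal{A}\|\mathbf{N}\|^3$.
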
 
	
	\begin{proof}
	By summing equation \eqref{3_1} with respect to $i$ from $1$ to $I$, we obtain
	\begin{align}\label{3_8}
		\frac{\dd \|\mathbf{N}(t)\|}{\dd t}
		= \sum_{i=1}^{I} \sum_{k=1}^{17} \mathcal{Q}^k_i(N_i).
	\end{align}
	
	The first term on the right-hand side of the preceding expression can be estimated as follows:
	\begin{align}\label{3_9}
		\begin{split}
			\sum_{i=1}^{I} \mathcal{Q}^1_i(N_i)
			&= \sum_{i=1}^{I}\sum_{j=1}^{i-2} \sum_{k=j+1}^{i-1} 
			\sum_{\bar{\theta}^i_{kj}(l)}
			\mathcal{K}^1_{l+k-j,j,k} N_j N_k N_l \\
			&= \sum_{i=1}^{I}\sum_{j=1}^{i-2} \sum_{k=j+1}^{i-1}
			\mathcal{K}^1_{i+k-j,j,k} N_j N_k N_i.
		\end{split}
	\end{align}
	
	Since each $\mathcal{K}_i \in \mathcal{C}^2$ for $i = 1,\dots,7$, one can find constants $\mathcal{A}_i$ such that
	\begin{align*}
		\sup_{(\omega,\mu,\eta)\in[0,R]^3} \mathcal{K}_i(\omega,\mu,\eta)
		\le \mathcal{A}_i \quad \text{for } i = 1,2,3,
	\end{align*}
	and
	\begin{align*}
		\sup_{(\omega,\mu)\in[0,R]^2} \mathcal{K}_i(\omega,\mu)
		\le \mathcal{A}_i \quad \text{for } i = 4,\dots,7.
	\end{align*}
	
	Applying these bounds together with the kernel discretization \eqref{kernel_dis} to equation \eqref{3_9} yields
	\begin{align*}
		\sum_{i=1}^{I} \mathcal{Q}^1_i(N_i)
		\le \mathcal{A}_1 \|\mathbf{N}(t)\|^3.
	\end{align*}
	
	Proceeding similarly for all remaining terms in equation \eqref{3_8} and substituting the resulting bounds into \eqref{3_8}, we obtain
	\begin{align*}
		\frac{\dd \|\mathbf{N}(t)\|}{\dd t}
		\le \mathcal{A}\,\|\mathbf{N}(t)\|^3
		\qquad \text{for all } 0 \le t \le T,
	\end{align*}
	for some constant $\mathcal{A}$. Consequently, this differential inequality provides the required estimate.
	
	\end{proof}
	
	\begin{Proposition}\label{Prop_2}
	If the wave kinetic kernels $\mathcal{K}_i$ satisfy condition \eqref{3_5}, 
	then there exists a positive constant $\zeta$, independent of the mesh, such that  
	\begin{align}\label{3_10}
		\|\mathbf{\hat{J}}(\mathbf{N}) - \mathbf{\hat{J}}(\mathbf{\hat{N}})\|
		\le \zeta\,\|\mathbf{N} - \mathbf{\hat{N}}\|,
		\qquad \text{for all } \mathbf{N}, \mathbf{\hat{N}} \in \mathbb{R}^I.
	\end{align}
	
	\end{Proposition}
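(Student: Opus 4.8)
The plan is to exploit the multilinear structure of the seventeen building blocks $\mathcal{Q}_i^k$ listed in \eqref{3_3}: each is either cubic ($\mathcal{K}^\bullet_{\cdots}N_aN_bN_c$) or quadratic ($\mathcal{K}^\bullet_{\cdots}N_aN_b$) in the unknown, so the difference $\hat{\mathbf{J}}(\mathbf{N})-\hat{\mathbf{J}}(\hat{\mathbf{N}})$ can be reorganized into a finite sum of terms in each of which the increment $\mathbf{N}-\hat{\mathbf{N}}$ has been isolated. First I would rewrite every cubic monomial difference through the telescoping identity
\[
N_aN_bN_c-\hat N_a\hat N_b\hat N_c=(N_a-\hat N_a)N_bN_c+\hat N_a(N_b-\hat N_b)N_c+\hat N_a\hat N_b(N_c-\hat N_c),
\]
and each quadratic difference $N_aN_b-\hat N_a\hat N_b$ analogously, so that in every resulting term exactly one factor carries the increment while the remaining factors carry either entries of $\mathbf{N}$ or of $\hat{\mathbf{N}}$. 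Taking the discrete $L^1$ norm \eqref{3_4} and applying the triangle inequality then reduces the whole problem to estimating, term by term, sums of the shape $\sum|\mathcal{K}^\bullet_{\cdots}|\,|N_\bullet-\hat N_\bullet|\,|N_\bullet|\,|N_\bullet|$ and their quadratic counterparts.

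The decisive point — the source of mesh independence — is twofold. On the one hand, the discretized kernels satisfy $|\mathcal{K}^k_{\cdots}|\le\mathcal{A}_k$, where the $\mathcal{A}_k$ are the sup-bounds already employed in the proof of Proposition~\ref{Prop_1}; these arise from the $\mathcal{C}^2$-regularity \eqref{3_5} via the evaluation rule \eqref{kernel_dis} and depend only on $R$ and on the chosen $\sigma,\gamma$, never on the cell count $I$ or the widths $\Delta\omega_i$. On the other hand, the nested constrained summations collapse: each resonance index set $\bar{\theta}^i_{kj}(l)$, $\hat{\theta}^k_{ij}(l)$, $\tilde{\theta}^j_k(l)$, $\mathcal{I}^i(j,k)$, $\mathcal{J}^i(j,k)$ fixes the omitted index uniquely (or up to boundedly many values) once the remaining indices are prescribed, precisely as the triple sum in \eqref{3_9} collapses to a double sum upon resolving $\bar{\theta}^i_{kj}(l)$. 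Hence, after summing over $i$, each free index is summed exactly once and contributes a single factor $\sum_\bullet|N_\bullet|=\|\mathbf{N}\|$ or $\|\hat{\mathbf{N}}\|$, and no spurious factor of $I$ is generated. Assembling the contributions of all seventeen terms yields a mesh-independent constant $C=C(R,\sigma,\gamma,\{\mathcal{A}_k\})$ with
\[
\|\hat{\mathbf{J}}(\mathbf{N})-\hat{\mathbf{J}}(\hat{\mathbf{N}})\|\le C\big(\|\mathbf{N}\|^2+\|\mathbf{N}\|\,\|\hat{\mathbf{N}}\|+\|\hat{\mathbf{N}}\|^2+\|\mathbf{N}\|+\|\hat{\mathbf{N}}\|\big)\,\|\mathbf{N}-\hat{\mathbf{N}}\|.
\]

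To turn this multilinear bound into the stated Lipschitz estimate \eqref{3_10} with a single constant $\zeta$ that is independent of the mesh, I would invoke Proposition~\ref{Prop_1}: the arguments to which the estimate is applied are controlled uniformly in the discretization by $\|\mathbf{N}\|,\|\hat{\mathbf{N}}\|\le\mathcal{L}(T)$, the bound $\mathcal{L}(T)$ depending only on $T$, the data, and the kernel constants, and in particular \emph{not} on $I$ or $\Delta\omega_i$. Substituting this uniform-in-mesh size control gives $\zeta:=C\big(3\,\mathcal{L}(T)^2+2\,\mathcal{L}(T)\big)$, a mesh-independent constant, and establishes \eqref{3_10}. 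I expect the principal obstacle to lie exactly in the cubic terms: absent a uniform size bound no single constant can dominate a genuinely cubic map, so it is the combination of the mesh-independent kernel and combinatorial bounds with the mesh-independent a priori estimate of Proposition~\ref{Prop_1} that is essential for pinning $\zeta$ down. The secondary difficulty is the careful bookkeeping needed to confirm, for all seventeen terms together with their shifted kernel arguments (for instance $\mathcal{K}^2_{i,l+k-i,i}$ in $\mathcal{Q}_i^6$ or $\mathcal{K}^3_{i,m+j-i,j}$ in $\mathcal{Q}_i^{10}$), that every constrained summation collapses as claimed and never contributes a factor growing with $I$.
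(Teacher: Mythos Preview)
Your approach is essentially the paper's: both use the telescoping identity on the cubic (and quadratic) monomials, bound the discretized kernels by the sup-norms coming from \eqref{3_5}, collapse the constrained summations as in \eqref{3_9}, and then invoke the a~priori bound $\mathcal{L}(T)$ of Proposition~\ref{Prop_1} to convert the resulting multilinear estimate into a mesh-independent Lipschitz constant. Your write-up is in fact more explicit than the paper's about the combinatorial collapse and about why the cubic terms force the use of Proposition~\ref{Prop_1} (a genuinely cubic map cannot be globally Lipschitz on all of $\mathbb{R}^I$, so the stated quantifier should be read as ``for all $\mathbf{N},\hat{\mathbf{N}}$ satisfying the bound of Proposition~\ref{Prop_1}''); the paper simply asserts $\|\mathcal{Q}^1(\mathbf{N})-\mathcal{Q}^1(\hat{\mathbf{N}})\|\le 3\mathcal{L}^2\|\mathbf{N}-\hat{\mathbf{N}}\|$ and leaves the remaining sixteen terms to the reader.
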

	
	\begin{proof}
	The proof of the proposition reduces to verifying that each term 
	$\mathcal{Q}^k$ $(k = 1,\dots,17)$ satisfies a Lipschitz condition. 
	Consider $\mathbf{N}, \mathbf{\hat{N}} \in \mathbb{R}^I$. Then
	\begin{align}\label{3_11}
		\begin{split}
			\|\mathcal{Q}^1(\mathbf{N}) - \mathcal{Q}^1(\mathbf{\hat{N}})\|
			&= \sum_{i=1}^{I} 
			\left|\mathcal{Q}^1_i(\mathbf{N}) - \mathcal{Q}^1_i(\mathbf{\hat{N}})\right| \\
			&= \sum_{i=1}^{I}\sum_{j=1}^{i-2} \sum_{k=j+1}^{i-1}
			\mathcal{K}^1_{i+k-j,j,k}\,
			\left|N_i N_j N_k - \hat{N}_i \hat{N}_j \hat{N}_k\right|.
		\end{split}
	\end{align}
	
	Observe that
	\[
	\left|N_i N_j N_k - \hat{N}_i \hat{N}_j \hat{N}_k\right|
	= \left| 
	N_i N_j (N_k - \hat{N}_k)
	+ N_i \hat{N}_k (N_j - \hat{N}_j)
	+ \hat{N}_j \hat{N}_k (N_i - \hat{N}_i)
	\right|.
	\]
	
	Substituting this identity into equation \eqref{3_11}, we obtain
	\[
	\|\mathcal{Q}^1(\mathbf{N}) - \mathcal{Q}^1(\mathbf{\hat{N}})\|
	\le 3 \mathcal{L}^2 \|\mathbf{N} - \mathbf{\hat{N}}\|.
	\]
	
	Following a similar procedure, one can show that the remaining terms 
	$\mathcal{Q}^k$ $(k = 2, \ldots, 17)$ also satisfy the Lipschitz condition. 
	Finally, substituting all these estimates into the relation  
	\[
	\|\mathbf{\hat{J}}(\mathbf{N}) - \mathbf{\hat{J}}(\mathbf{\hat{N}})\|
	= \sum_{k=1}^{17} \|\mathcal{Q}^k(\mathbf{N}) - \mathcal{Q}^k(\mathbf{\hat{N}})\|,
	\]
	we obtain a positive constant $\zeta$, depending on $T$ and $\mathcal{L}$ but independent of the mesh, 
	such that the Lipschitz inequality \eqref{3_10} holds.
	
	\end{proof}
	
	At this stage, we proceed to establish the main convergence theorem.
	
	\begin{Theorem}
Assume that all wave kernels $\mathcal{K}_i$ $(i = 1,\ldots,7)$ satisfy condition \eqref{3_5}. 
Then the proposed finite volume scheme \eqref{3_1} is non-negative and consistent with 
first-order accuracy. Consequently, the scheme \eqref{3_1} is convergent, and the order 
of convergence coincides with the order of consistency.

	\end{Theorem}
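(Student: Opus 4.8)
The plan is to assemble the theorem from three ingredients in the standard Lax-equivalence style for ODE discretizations: non-negativity of the scheme, consistency of first order, and the Lipschitz property already established in Proposition \ref{Prop_2}; one then invokes the convergence result of \cite{hundsdorfer2013numerical,linz1975convergence} to conclude that consistency plus Lipschitz stability gives convergence at the order of consistency. First I would address non-negativity. Working from the semi-discrete system \eqref{2_12}, I would check that along the boundary of the positive cone $\{N_i \ge 0\}$—that is, whenever some $N_{i_0} = 0$—the right-hand side $\hat J_{i_0}(\mathbf N)$ is non-negative. The key observation is that every negative contribution to $\mathrm dN_i/\mathrm dt$ (the terms $\mathcal Q_i^3, \mathcal Q_i^4, \mathcal Q_i^7, \mathcal Q_i^8, \mathcal Q_i^{11}, \mathcal Q_i^{12}, \mathcal Q_i^{14}, \mathcal Q_i^{15}$) carries an explicit factor of $N_i$, since the kernels $\mathcal K^k$ are non-negative by \eqref{3_5} together with the explicit forms in Section 2 (recall $|k|$ is increasing and $\sigma,\gamma \ge 0$, so each $\mathcal K_j \ge 0$ on the relevant domain; the only delicate sign is in $\mathcal K_1$, which contains the factor $|k|(\mu)-2|k|(\eta)$, but on the region of integration $\mu \le \eta$ in $F^1$, so this factor is $\le -|k|(\eta) \le 0$, and I would track how the overall sign is absorbed into the structure of \eqref{2_12}). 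Hence when $N_{i_0}=0$ the loss terms vanish and only non-negative gain terms survive, so by a standard invariant-region argument the cone $\{N_i\ge 0\}$ is forward invariant.

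Next I would prove consistency with first-order accuracy, i.e. bound the spatial discretization error $\epsilon(t) = \mathrm d\mathbf N/\mathrm dt - \hat{\mathbf J}(\mathbf N)$ from Definition \ref{def_1} by $\mathcal O(\Delta\omega)$ in the discrete $L^1$ norm, uniformly on $[0,T]$. Here I would insert the exact solution into the semi-discrete cell-averaged equations \eqref{2_3}–\eqref{2_10} and compare term by term with the discretized sums $\mathcal Q_i^k$. The errors enter through three mechanisms: (i) replacing $\mathcal K_k(\omega,\mu,\eta)$ by its cell value $\mathcal K^k_{i,j,k}$, which is an $\mathcal O(\Delta\omega)$ perturbation by the $\mathcal C^2$ (indeed $\mathcal C^1$ suffices) regularity in \eqref{3_5} and a mean-value estimate; (ii) replacing $f$ by the reconstruction \eqref{2_11}, which by \eqref{3_2} introduces an $\mathcal O(\Delta\omega^3)$ defect; and (iii) the quadrature/midpoint approximation of the iterated integrals over the cells, again $\mathcal O(\Delta\omega)$ by smoothness. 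Summing these local errors over the $I = \mathcal O(\Delta\omega^{-1})$ cells, using $\Delta\omega_{\min}\le\Delta\omega_i\le\Delta\omega$ and the a priori bound $\|\mathbf N(t)\|\le\mathcal L(T)$ from Proposition \ref{Prop_1} to control the products $N_iN_jN_l$, yields $\|\epsilon(t)\| = \mathcal O(\Delta\omega)$ uniformly in $t\le T$.

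Finally I would combine these: Proposition \ref{Prop_2} gives a mesh-independent Lipschitz constant $\zeta$ for $\hat{\mathbf J}$, so subtracting \eqref{3_6} from \eqref{3_1} and setting $\mathbf e(t) = \mathbf N(t) - \hat{\mathbf N}(t)$ gives $\mathrm d\|\mathbf e\|/\mathrm dt \le \zeta\|\mathbf e\| + \|\epsilon(t)\|$, and since $\mathbf e(0)=0$ (same initial data), Gronwall's inequality gives $\|\mathbf e(t)\| \le (e^{\zeta T}-1)\zeta^{-1}\sup_{[0,T]}\|\epsilon\| = \mathcal O(\Delta\omega)$; this is precisely the convergence theorem of \cite{linz1975convergence,hundsdorfer2013numerical}, whose hypotheses (Lipschitz stability and $p$th-order consistency) have now been verified with $p=1$, so the scheme converges with first order, matching the order of consistency. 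The main obstacle I anticipate is the consistency estimate, specifically bookkeeping the many index sets $\mathcal I^i, \mathcal J^i, \tilde\theta, \bar\theta, \hat\theta$ in \eqref{2_12} and verifying that each discrete sum is genuinely a first-order-accurate quadrature of the corresponding continuous integral in \eqref{2_3}–\eqref{2_10} — in particular handling the cells adjacent to $\omega=0$ and to $\omega=R$, where the reconstruction \eqref{2_11} and the truncation of the domain could in principle degrade the order; I would treat those boundary layers by noting their total measure is $\mathcal O(\Delta\omega)$ and the integrands are bounded there.
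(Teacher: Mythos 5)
Your proposal follows essentially the same route as the paper: non-negativity by checking the sign of $\hat{J}_i$ on the boundary of the positive cone and invoking the criterion of \cite{hundsdorfer2013numerical}, first-order consistency in the discrete $L^1$ norm by term-by-term quadrature analysis of \eqref{2_3}--\eqref{2_10} using the $\mathcal{C}^2$ kernels and the reconstruction \eqref{2_11}, and convergence by combining Proposition \ref{Prop_2} with the framework of \cite{linz1975convergence} (the paper cites this rather than writing out the Gronwall step you sketch). One accounting caveat: for the sum over $I=\mathcal{O}(\Delta\omega^{-1})$ cells to give $\|\epsilon\|=\mathcal{O}(\Delta\omega)$, the per-cell defect $F^k_i-\hat{\mathcal{Q}}^k_i$ must be $\mathcal{O}(\Delta\omega^2)$, not $\mathcal{O}(\Delta\omega)$ as stated; this is exactly what the paper's detailed computation \eqref{3_13}--\eqref{3_26} delivers, via the cancellation of the half-weight diagonal correction terms between gain and loss contributions, so your plan is sound once that order of the local error is made explicit.
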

	\begin{proof}
	To prove the convergence of the proposed finite volume scheme \eqref{3_1}, 
	it is necessary to show that the scheme is both non-negative and consistent.
	
	\emph{Non-negativity:}  
	Consider a non-negative approximate solution 
	\(\mathbf{\hat{N}} := \{\hat{N}_1, \hat{N}_2, \ldots, \hat{N}_I\}\), 
	and suppose that its \(i\)-th component satisfies \(\hat{N}_i = 0\). 
	Then, from the definition of the numerical fluxes \eqref{3_3}, we obtain
	\[
	\mathcal{Q}_i^k \ge 0 \quad \text{for all } k \in S := \{1,5,9,13,17\},
	\qquad 
	\mathcal{Q}_i^k = 0 \quad \text{for all } 
	k \in \{1,\ldots,17\} \setminus S.
	\]
	Substituting this relation into equation \eqref{3_2.1}, we obtain 
	\(\hat{J}_i(\mathbf{\hat{N}}) \ge 0\) for any \(i \in \{1, \ldots, I\}\). 
	Combining this result with Proposition~\ref{Prop_2} and Theorem~4.1 in 
	\cite{hundsdorfer2013numerical}, it follows that the proposed numerical scheme 
	is non-negative.

	\emph{Consistency:}  
	The consistency of the proposed scheme \eqref{3_1} can be established by first examining 
	the discretization of the term \eqref{2_3}. We write
	\begin{align}
		F^1_i(t) &= 
		\int_{\omega_{i-1/2}}^{\omega_{i+1/2}}
		\int_{0}^{\omega_{i-1/2}}
		\int_{\mu}^{\omega}\, \mathrm{d}\eta \, \mathrm{d}\mu \, \mathrm{d}\omega
		\mathcal{K}_1(\omega,\mu,\eta)
		\Big[
		f(\eta) f(\omega + \mu - \eta)\big(f(\mu)+f(\omega)\big)
		\\[-0.3em]
		&\hspace{4.1cm}
		- f(\omega) f(\mu)\big(f(\eta)+f(\omega + \mu - \eta)\big)
		\Big]
		\notag\\
		&\quad +
		\int_{\omega_{i-1/2}}^{\omega_{i+1/2}}
		\int_{\omega_{i-1/2}}^{\omega}
		\int_{\mu}^{\omega}\, \mathrm{d}\eta \, \mathrm{d}\mu \, \mathrm{d}\omega
		\mathcal{K}_1(\omega,\mu,\eta)
		\Big[
		f(\eta) f(\omega + \mu - \eta)\big(f(\mu)+f(\omega)\big)
		\\[-0.3em]
		&\hspace{4.1cm}
		- f(\omega) f(\mu)\big(f(\eta)+f(\omega + \mu - \eta)\big)
		\Big]
		.
		\notag
	\end{align}

By employing the midpoint and left rectangular quadrature rules for the second term, we obtain
\begin{align}\label{3_12}
	F^1_i(t) &=
	\int_{\omega_{i-1/2}}^{\omega_{i+1/2}}
	\int_{0}^{\omega_{i-1/2}}
	\int_{\mu}^{\omega}
	\mathcal{K}_1(\omega,\mu,\eta)
	\left[
	f(\eta) f(\omega+\mu-\eta)\big(f(\mu)+f(\omega)\big)
	\right. \notag\\
	&\left.\hspace{3.2cm}
	- f(\omega) f(\mu)\big(f(\eta)+f(\omega+\mu-\eta)\big)
	\right]
	\, \mathrm{d}\eta \, \mathrm{d}\mu \, \mathrm{d}\omega .
\end{align}

Let us focus on the first term appearing on the right-hand side of $F^1_i$:
\begin{align}\label{3_13}
	\begin{split}
		I^1_1
		&=
		\sum_{j=1}^{i-1}
		\int_{\omega_{i-1/2}}^{\omega_{i+1/2}}
		\int_{\omega_{j-1/2}}^{\omega_{j+1/2}}\, \mathrm{d}\eta \mathrm{d}\mu \mathrm{d}\omega
		\int_{\mu}^{\omega_{i-1/2}}
		\mathcal{K}_1(\omega,\mu,\eta)
		f(\mu) f(\eta) f(\omega+\mu-\eta)
		\\
		&\quad +
		\sum_{j=1}^{i-1}
		\int_{\omega_{i-1/2}}^{\omega_{i+1/2}}
		\int_{\omega_{j-1/2}}^{\omega_{j+1/2}}
		\int_{\omega_{i-1/2}}^{\omega}	\, \mathrm{d}\eta \mathrm{d}\mu \mathrm{d}\omega
		\mathcal{K}_1(\omega,\mu,\eta)
		f(\mu) f(\eta) f(\omega+\mu-\eta)
		\\
		&=
		\sum_{j=1}^{i-1}
		\int_{\omega_{i-1/2}}^{\omega_{i+1/2}}
		\int_{\omega_j}^{\omega_{i-1/2}}
		\, \mathrm{d}\eta \mathrm{d}\omega
		\mathcal{K}_1(\omega,\omega_j,\eta)
		N_j f(\eta) f(\omega+\omega_j-\eta)
		\\
		&\quad +
		\sum_{j=1}^{i-1}
		\int_{\omega_{i-1/2}}^{\omega_i}	\, \mathrm{d}\eta
		\mathcal{K}_1(\omega_i,\omega_j,\eta)
		N_i N_j f(\omega_i+\omega_j-\eta)
		\\
		&=
		\sum_{j=1}^{i-1}
		\int_{\omega_{i-1/2}}^{\omega_{i+1/2}}
		\int_{\omega_j}^{\omega_{j+1/2}}
		\, \mathrm{d}\eta \mathrm{d}\omega
		\mathcal{K}_1(\omega,\omega_j,\eta)
		N_j f(\eta) f(\omega+\omega_j-\eta)
		\\
		&\quad +
		\sum_{j=1}^{i-2}
		\int_{\omega_{i-1/2}}^{\omega_{i+1/2}}
		\int_{\omega_{j+1/2}}^{\omega_{i-1/2}}
		\, \mathrm{d}\eta \mathrm{d}\omega
		\mathcal{K}_1(\omega,\omega_j,\eta)
		N_j f(\eta) f(\omega+\omega_j-\eta)
		\\
		&\quad +
		\frac{1}{2}\sum_{j=1}^{i-1}
		\mathcal{K}^1_{i,j,i} N_i N_j^2
		+
		\mathcal{O}(\Delta \omega^2)
		\\
		&=
		\sum_{j=1}^{i-2}
		\sum_{k=j+1}^{i-1}
		\sum_{\omega_{i-1/2} \le \omega_l + \omega_k - \omega_j < \omega_{i+1/2}}
		\mathcal{K}^1_{l+k-j,j,k}
		N_j N_k N_l
		\\
		&\quad +
		\frac{1}{2}\sum_{j=1}^{i-1}
		\left(\mathcal{K}^1_{i,j,j} + \mathcal{K}^1_{i,j,i}\right)
		N_i N_j^2
		+
		\mathcal{O}(\Delta \omega^2)
		\\
		&=
		\mathcal{Q}_i^1 \left(\mathbf{\hat{N}}\right)
		+
		\frac{1}{2}\sum_{j=1}^{i-1}
		\left(\mathcal{K}^1_{i,j,j} + \mathcal{K}^1_{i,j,i}\right)
		N_i N_j^2
		+
		\mathcal{O}(\Delta \omega^2).
	\end{split}
\end{align}

Next, we consider the second term on the right-hand side of $F^1_i$ and change the order 
of integration to simplify the expression 
\begin{align}\label{3_14}
	I_2^1
	&=
	\sum_{j=1}^{i-1}
	\int_{\omega_{i-1/2}}^{\omega_{i+1/2}}
	\int_{\omega_{j-1/2}}^{\omega_{j+1/2}}
	\int_{0}^{\eta}
	\, \mathrm{d}\mu\,\mathrm{d}\eta\,\mathrm{d}\omega
	\mathcal{K}_1(\omega,\mu,\eta)
	f(\omega) f(\eta) f(\omega+\mu-\eta)
	\\
	&\quad+
	\sum_{j=1}^{i-1}
	\int_{\omega_{i-1/2}}^{\omega_{i+1/2}}
	\int_{\omega_{j-1/2}}^{\omega_{j+1/2}}
	\int_{\omega_{i-1/2}}^{\omega}
	\, \mathrm{d}\eta\,\mathrm{d}\mu\,\mathrm{d}\omega
	\mathcal{K}_1(\omega,\mu,\eta)
	f(\omega) f(\eta) f(\omega+\mu-\eta) \notag
	\\
	&=
	\sum_{j=1}^{i-1}
	\int_{0}^{\omega_j}	\, \mathrm{d}\mu
	\mathcal{K}_1(\omega_i, \mu, \omega_j)\,
	N_i N_j\, f(\omega_i+\mu-\omega_j)
	+
	\frac{1}{2} \sum_{j=1}^{i-1}
	\mathcal{K}^1_{i,j,i} N_i^{2} N_j
	+
	\mathcal{O}(\Delta\omega^{2}) \notag
	\\
	&=
	\sum_{j=2}^{i-1}
	\sum_{k=1}^{j-1}
	\sum_{\omega_{k-1/2} \le \omega_l + \omega_j - \omega_i < \omega_{k+1/2}}
	\mathcal{K}^1_{i,l+j-i,j}\,
	N_i N_j N_l
	+
	\frac{1}{2} \sum_{j=1}^{i-1}
	\left(\mathcal{K}^1_{i,j,j} + \mathcal{K}^1_{i,j,i}\right)
	N_i^{2} N_j
	+
	\mathcal{O}(\Delta\omega^{2}) \notag
	\\
	&=
	\mathcal{Q}_i^{2}(\mathbf{\hat{N}})
	+
	\frac{1}{2} \sum_{j=1}^{i-1}
	\left(\mathcal{K}^1_{i,j,j} + \mathcal{K}^1_{i,j,i}\right)
	N_i^{2} N_j
	+
	\mathcal{O}(\Delta\omega^{2}). \notag
\end{align}

Consider the third term on the right-hand side of \(F^1_i\) and apply the midpoint rule for approximation:
\begin{align}\label{3_15}
	\begin{split}
		I_3^1
		&= -\int_{0}^{\omega_{i-1/2}} \int_{\mu}^{\omega_i}\, \mathrm{d}\eta\, \mathrm{d}\mu
		\mathcal{K}_1(\omega_i,\mu,\eta)\, N_i\, f(\mu)\, f(\eta) \\[0.3em]
		&= -\sum_{j=1}^{i-1}\int_{\omega_j}^{\omega_{i-1/2}}\, \mathrm{d}\eta
		\mathcal{K}_1(\omega_i,\omega_j,\eta)\, N_i N_j\, f(\eta)
		\;-\;
		\sum_{j=1}^{i-1}\int_{\omega_{i-1/2}}^{\omega_i}\, \mathrm{d}\eta 
		\mathcal{K}_1(\omega_i,\omega_j,\eta)\, N_i N_j\, f(\eta)\\[0.3em]
		&= -\sum_{j=1}^{i-2}\sum_{k=j+1}^{i-1}
		\int_{\omega_{k-1/2}}^{\omega_{k+1/2}}\, \mathrm{d}\eta
		\mathcal{K}_1(\omega_i,\omega_j,\eta)\, N_i N_j\, f(\eta)
		\;-\;
		\sum_{j=1}^{i-1}\int_{\omega_j}^{\omega_{j+1/2}}\, \mathrm{d}\eta
		\mathcal{K}_1(\omega_i,\omega_j,\eta)\, N_i N_j\, f(\eta) \\[0.3em]
		&\quad
		-\frac{1}{2}\sum_{j=1}^{i-1}\mathcal{K}^1_{i,j,i}\, N_i^2 N_j
		+\mathcal{O}(\Delta\omega^2) \\[0.4em]
		&= \mathcal{Q}_i^3(\mathbf{\hat{N}})
		-\frac{1}{2} \sum_{j=1}^{i-1}\mathcal{K}^1_{i,j,j}\, N_i N_j^2
		-\frac{1}{2} \sum_{j=1}^{i-1}\mathcal{K}^1_{i,j,i}\, N_i^2 N_j
		+\mathcal{O}(\Delta\omega^2).
	\end{split}
\end{align}

Consider the last term on the right-hand side of \(F^1_i\):
\begin{align}\label{3_16}
	I_4^1
	&= -\sum_{j=1}^{i-1}
	\int_{\omega_{i-1/2}}^{\omega_{i+1/2}}
	\int_{\omega_{j-1/2}}^{\omega_{j+1/2}}
	\int_{\mu}^{\omega_{i-1/2}}\,
	\mathrm{d}\eta\, \mathrm{d}\mu\, \mathrm{d}\omega 
	\mathcal{K}_1(\omega,\mu,\eta)\,
	f(\omega)\, f(\mu)\, f(\omega+\mu-\eta)
	\\
	&\quad
	-\sum_{j=1}^{i-1}
	\int_{\omega_{i-1/2}}^{\omega_{i+1/2}}
	\int_{\omega_{j-1/2}}^{\omega_{j+1/2}}
	\int_{\omega_{i-1/2}}^{\omega}\,
	\mathrm{d}\eta\, \mathrm{d}\mu\, \mathrm{d}\omega
	\mathcal{K}_1(\omega,\mu,\eta)\,
	f(\omega)\, f(\mu)\, f(\omega+\mu-\eta)
	\notag\\[0.4em]
	&=
	-\sum_{j=1}^{i-2}
	\int_{\omega_{i-1/2}}^{\omega_{i+1/2}}
	\int_{\omega_j}^{\omega_{i-1/2}}\,
	\mathrm{d}\eta\, \mathrm{d}\omega
	\mathcal{K}_1(\omega,\omega_j,\eta)\,
	f(\omega)\, N_j\, f(\omega+\omega_j-\eta)
	\notag\\
	&\quad
	-\sum_{j=1}^{i-1}
	\int_{\omega_{i-1/2}}^{\omega_{i}}\,
	\mathrm{d}\eta
	\mathcal{K}_1(\omega_i,\omega_j,\eta)\,
	N_i N_j\, f(\omega_i+\omega_j-\eta)
	\notag\\[0.4em]
	&=
	-\sum_{j=1}^{i-2}\sum_{k=j+1}^{i-1}
	\sum_{\omega_{k-1/2} \le \,\omega_i-\omega_j-\omega_l < \omega_{k+1/2}}
	\mathcal{K}_1\!\left(\omega_i,\omega_j,\omega_i+\omega_j-\omega_l\right)
	N_i N_j N_l
	\notag\\
	&\quad
	-\frac{1}{2}\sum_{j=1}^{i-1} \mathcal{K}^1_{i,j,i}\, N_i N_j^2
	-\frac{1}{2}\sum_{j=1}^{i-1} F(\omega_i,\omega_j,\omega_j)\, N_i^2 N_j
	+\mathcal{O}(\Delta\omega^2)
	\notag\\[0.4em]
	&=
	\mathcal{Q}_i^4
	-\frac{1}{2}\sum_{j=1}^{i-1} \mathcal{K}^1_{i,j,i}\, N_i N_j^2
	-\frac{1}{2}\sum_{j=1}^{i-1} F(\omega_i,\omega_j,\omega_j)\, N_i^2 N_j
	+\mathcal{O}(\Delta\omega^2). \notag
\end{align}

	By substituting equations \eqref{3_13}–\eqref{3_14} into equation \eqref{3_12}, we obtain
	\begin{align}\label{3_17}
		F^1_i(t)  = \sum_{k=1}^{4}\mathcal{Q}_i^k +\mathcal{O} \left(\Delta \omega^2\right):= \hat{\mathcal{Q}}_i^1 +\mathcal{O} \left(\Delta \omega^2\right).
	\end{align}
	
	
Next, we compute the discretization error corresponding to term \eqref{2_4}. 
The first term on the right-hand side of \(F^2_i\) is discretized as follows 
\begin{align}\label{3_18}
	I^2_1 &= \int_{\omega_{i-1/2}}^{\omega_{i+1/2}}
	\int_{\omega}^{R}
	\int_{0}^{\omega} 
	\,\mathrm{d}\eta\,\mathrm{d}\mu\,\mathrm{d}\omega 
	\mathcal{K}_2(\omega,\mu-\omega,\eta)\,
	f(\eta)\,f(\mu - \eta)\,f(\mu-\omega)\\[0.2cm]
	&= \int_{\omega_{i-1/2}}^{\omega_{i+1/2}}
	\int_{\omega_{i+1/2}}^{R}
	\int_{0}^{\omega_{i-1/2}}
	\,\mathrm{d}\eta\,\mathrm{d}\mu\,\mathrm{d}\omega
	\mathcal{K}_2(\omega,\mu-\omega,\eta)\,
	f(\eta)\,f(\mu - \eta)\,f(\mu-\omega) \notag\\
	&\quad + 
	\int_{\omega_{i-1/2}}^{\omega_{i+1/2}}
	\int_{\omega_{i+1/2}}^{R}
	\int_{\omega_{i-1/2}}^{\omega}
	\,\mathrm{d}\eta\,\mathrm{d}\mu\,\mathrm{d}\omega
	\mathcal{K}_2(\omega,\mu-\omega,\eta)\,
	f(\eta)\,f(\mu - \eta)\,f(\mu-\omega) \notag\\[0.2cm]
	&= \sum_{j=i+1}^{I} \sum_{k=1}^{i-1}
	\int_{\omega_{i-1/2}}^{\omega_{i+1/2}}
	\int_{\omega_{j-1/2}}^{\omega_{j+1/2}}
	\,\mathrm{d}\mu\,\mathrm{d}\omega 
	\mathcal{K}_2(\omega,\mu-\omega,\omega_k)\,
	N_k\,f(\mu - \omega_k)\,f(\mu-\omega)\notag\\
	&\quad + \frac{1}{2}
	\sum_{j=i+1}^{I}
	\int_{\omega_{j-1/2}}^{\omega_{j+1/2}}\,\mathrm{d}\mu
	\mathcal{K}_2(\omega_i,\mu-\omega_i,\omega_i)\,
	N_i\,f(\mu - \omega_i)\,f(\mu-\omega_i)
	+ \mathcal{O}(\Delta\omega^2) \notag\\[0.2cm]
	&=
	\sum_{j=i+1}^{I} \sum_{k=1}^{i-1}
	\sum_{\omega_{j-1/2} \le \omega_k + \omega_l < \omega_{j+1/2}}
	\int_{\omega_{i-1/2}}^{\omega_{i+1/2}}\,
	\mathrm{d}\omega
	\mathcal{K}_2(\omega,\omega_l+\omega_k-\omega,\omega_k)\,
	N_k N_l\,f(\omega_l+\omega_k-\omega) \notag\\
	&\quad + \frac{1}{2}
	\sum_{j=i+1}^{I}
	\sum_{\omega_{j-1/2} \le \omega_k + \omega_i < \omega_{j+1/2}}
	\mathcal{K}^2_{i,k,i}\,N_i N_k^2
	+ \mathcal{O}(\Delta\omega^2) \notag\\[0.2cm]
	&= \mathcal{Q}_i^5
	+ \frac{1}{2}
	\sum_{j=i+1}^{I}
	\sum_{\omega_{j-1/2} \le \omega_k + \omega_i < \omega_{j+1/2}}
	\mathcal{K}^2_{i,k,i}\,N_i N_k^2
	+ \mathcal{O}(\Delta\omega^2). \notag
\end{align}

Consider the second term on the right-hand side of $F^2_i$ 
\begin{align}\label{3_19}
	I^2_2 &= 
	\int_{\omega_{i-1/2}}^{\omega_{i+1/2}}
	\int_{\omega}^{R}
	\int_{0}^{\omega} 
	\,\mathrm{d}\eta\,\mathrm{d}\mu\,\mathrm{d}\omega
	\mathcal{K}_2(\omega,\mu-\omega,\eta)\,
	f(\omega)\,f(\eta)\,f(\mu - \eta) \\[0.2cm]
	&=
	\int_{\omega_{i-1/2}}^{\omega_{i+1/2}}
	\int_{\omega_{i+1/2}}^{R}
	\int_{0}^{\omega_{i-1/2}} 
	\,\mathrm{d}\eta\,\mathrm{d}\mu\,\mathrm{d}\omega
	\mathcal{K}_2(\omega,\mu-\omega,\eta)\,
	f(\omega)\,f(\eta)\,f(\mu - \eta) \notag \\
	&\quad +
	\int_{\omega_{i-1/2}}^{\omega_{i+1/2}}
	\int_{\omega_{i+1/2}}^{R}
	\int_{\omega_{i-1/2}}^{\omega} 
	\,\mathrm{d}\eta\,\mathrm{d}\mu\,\mathrm{d}\omega
	\mathcal{K}_2(\omega,\mu-\omega,\eta)\,
	f(\omega)\,f(\eta)\,f(\mu - \eta) \notag \\[0.2cm]
	&=
	\sum_{j=i+1}^{I} 
	\sum_{k=1}^{i-1}
	\int_{\omega_{j-1/2}}^{\omega_{j+1/2}}
	\mathcal{K}_2(\omega_i,\mu-\omega_i,\omega_k)\,
	N_i N_k\,f(\mu - \omega_k)
	\,\mathrm{d}\mu \notag \\
	&\quad +
	\frac{1}{2}
	\sum_{j=i+1}^{I}  
	\int_{\omega_{j-1/2}}^{\omega_{j+1/2}}\,\mathrm{d}\mu
	\mathcal{K}_2(\omega_i,\mu-\omega_i,\omega_i)\,
	N_i^2\,f(\mu - \omega_i)
	+ \mathcal{O}(\Delta \omega^2) \notag \\[0.2cm]
	&=
	\mathcal{Q}_i^6
	+
	\frac{1}{2}
	\sum_{j=i+1}^{I}
	\sum_{\omega_{j-1/2}\le \omega_l + \omega_i < \omega_{j+1/2}}
	\mathcal{K}^2_{i,l,i}\, N_i^2 N_l
	+
	\mathcal{O}(\Delta \omega^2). \notag
\end{align}

Consider the third term on the right-hand side of $F_i^2$:
\begin{align}\label{3_20}
	I^2_3 &= - \int_{\omega_{i-1/2}}^{\omega_{i+1/2}}
	\int_{\omega_{i+1/2}}^{R}
	\int_{0}^{\omega_{i-1/2}}
	\,\mathrm{d}\eta\,\mathrm{d}\mu\,\mathrm{d}\omega
	\mathcal{K}_2(\omega,\mu-\omega,\eta)\,
	f(\omega) f(\eta) f(\mu - \omega) \\
	&\quad
	- \int_{\omega_{i-1/2}}^{\omega_{i+1/2}}
	\int_{\omega_{i+1/2}}^{R}
	\int_{\omega_{i-1/2}}^{\omega}
	\,\mathrm{d}\eta\,\mathrm{d}\mu\,\mathrm{d}\omega
	\mathcal{K}_2(\omega,\mu-\omega,\eta)\,
	f(\omega) f(\eta) f(\mu - \omega) \notag \\
	&= -\sum_{j=i+1}^{I}\sum_{k=1}^{i-1}
	\int_{\omega_{j-1/2}}^{\omega_{j+1/2}}
	\int_{\omega_{k-1/2}}^{\omega_{k+1/2}}
	\,\mathrm{d}\eta\,\mathrm{d}\mu
	\mathcal{K}_2(\omega_i,\mu-\omega_i,\eta)\,
	N_i f(\eta) f(\mu - \omega_i) \notag \\
	&\quad
	- \frac{1}{2}\sum_{j=i+1}^{I}
	\int_{\omega_{j-1/2}}^{\omega_{j+1/2}}\,\mathrm{d}\mu
	\mathcal{K}_2(\omega_i,\mu-\omega_i,\omega_i)\,
	N_i^2 f(\mu - \omega_i)
	+ \mathcal{O}(\Delta\omega^2) \notag \\
	&= \mathcal{Q}_i^7
	- \frac{1}{2}\sum_{j=i+1}^{I}
	\sum_{\omega_{j-1/2} \le \omega_l + \omega_i < \omega_{j+1/2}}
	\mathcal{K}^2_{i,l,i}\, N_i^2 N_l
	+ \mathcal{O}(\Delta\omega^2). \notag
\end{align}

We focus on the fourth term on the right-hand side of $F_i^2$ for its discretization 
\begin{align}\label{3_21}
	I^2_4 
	&= - \int_{\omega_{i-1/2}}^{\omega_{i+1/2}}
	\int_{\omega_{i+1/2}}^{R}
	\int_{0}^{\omega_{i-1/2}}\,
	\mathrm{d}\eta\,\mathrm{d}\mu\,\mathrm{d}\omega 
	\mathcal{K}_2(\omega,\mu-\omega,\eta)\,
	f(\omega)\, f(\mu-\omega)\, f(\mu-\eta) \\
	&\quad
	- \int_{\omega_{i-1/2}}^{\omega_{i+1/2}}
	\int_{\omega_{i+1/2}}^{R}
	\int_{\omega_{i-1/2}}^{\omega}\,
	\mathrm{d}\eta\,\mathrm{d}\mu\,\mathrm{d}\omega
	\mathcal{K}_2(\omega,\mu-\omega,\eta)\,
	f(\omega)\, f(\mu-\omega)\, f(\mu-\eta) \notag \\
	&= - \sum_{j=i+1}^{I}\sum_{k=1}^{i-1}
	\int_{\omega_{j-1/2}}^{\omega_{j+1/2}}
	\int_{\omega_{k-1/2}}^{\omega_{k+1/2}}\,
	\mathrm{d}\eta\,\mathrm{d}\mu 
	\mathcal{K}_2\!\left(\omega_i,\mu-\omega_i,\eta\right)
	N_i\, f(\mu-\omega_i)\, f(\mu-\eta)\notag \\
	&\quad
	- \frac{1}{2}\sum_{j=i+1}^{I}
	\int_{\omega_{j-1/2}}^{\omega_{j+1/2}}\,
	\mathrm{d}\mu 
	\mathcal{K}_2\!\left(\omega_i,\mu-\omega_i,\omega_i\right)
	N_i\, f(\mu-\omega_i)\, f(\mu-\omega_i)\notag \\
	&= \mathcal{Q}_i^8
	- \frac{1}{2}\sum_{j=i+1}^{I}
	\sum_{\omega_{j-1/2}\le \omega_l+\omega_i<\omega_{j+1/2}}
	\mathcal{K}^2_{i,l,i}\, N_i N_l^{2}
	+ \mathcal{O}(\Delta\omega^2). \notag
\end{align}

Combining equations \eqref{3_18}--\eqref{3_21} in equation \eqref{2_4}, we obtain
\begin{align}\label{3_22}
	F^2_i(t)
	= \sum_{k=5}^{8} \mathcal{Q}_i^k
	+ \mathcal{O}(\Delta\omega^2)
	:= \hat{\mathcal{Q}}_i^2
	+ \mathcal{O}(\Delta\omega^2).
\end{align}

To compute the discretization error associated with $F_i^3$, we begin by discretizing the first term on the right-hand side of \eqref{2_5}   
\begin{align}\label{3_23}
	I^3_1 
	&:= \int_{\omega_{i-1/2}}^{\omega_{i+1/2}} \int_{\omega}^{R} \int_{\omega}^{\mu}
	\, \mathrm{d}\eta\, \mathrm{d}\mu\, \mathrm{d}\omega
	\mathcal{K}_3(\omega,\mu,\eta)\,
	f(\mu)\, f(\eta)\, f(\omega + \mu - \eta) \\[0.3em]
	&= \int_{\omega_{i-1/2}}^{\omega_{i+1/2}} 
	\int_{\omega_{i+1/2}}^{R}
	\int_{\omega}^{\mu}
	\, \mathrm{d}\eta\, \mathrm{d}\mu\, \mathrm{d}\omega
	\mathcal{K}_3(\omega,\mu,\eta)\,
	f(\mu)\, f(\eta)\, f(\omega + \mu - \eta) \notag \\[0.3em]
	&= \sum_{j=i+1}^{I} 
	\int_{\omega_{i-1/2}}^{\omega_{i+1/2}}
	\int_{\omega}^{\omega_j}
	\, \mathrm{d}\eta\, \mathrm{d}\omega
	\mathcal{K}_3(\omega,\omega_j,\eta)\,
	N_j\, f(\eta)\, f(\omega + \omega_j - \eta)
	+ \mathcal{O}(\Delta \omega^3) \notag \\[0.3em]
	&= \frac{1}{2} 
	\sum_{j=i+1}^{I}
	\left( \mathcal{K}^3_{i,j,i} + \mathcal{K}^3_{i,j,j} \right)
	N_i\, N_j^2
	+ \mathcal{Q}_i^9
	+ \mathcal{O}(\Delta \omega^2). \notag
\end{align}

	Next, we focus on the second term on the right-hand side of $F_i^3$ 
	\begin{align}\label{3_24}
		I^3_2 &= \int_{\omega_{i-1/2}}^{\omega_{i+1/2}}\int_{\omega_{i+1/2}}^R \int_{\omega}^{\mu} \dd \eta \dd \mu \dd \omega \mathcal{K}_3(\omega,\mu,\eta) f(\omega) f(\eta) f(\omega + \mu - \eta)\\
		& = \sum_{j=i+1}^{I} \int_{\omega_i}^{\omega_{i+1/2}}\dd \mu \dd \eta \int_{\omega_{j-1/2}}^{\omega_{j+1/2}} \mathcal{K}_3(\omega_i,\mu,\eta) N_i f(\eta) f(\omega_i + \mu - \eta)  \notag\\
		& \quad + \sum_{j=i+1}^{I}\int_{\omega_{j-1/2}}^{\omega_{j+1/2}} \int_{\eta}^{R}\dd \mu \dd \eta\mathcal{K}_3(\omega_i,\mu,\eta) N_i f(\eta) f(\omega_i + \mu - \eta)  +\mathcal{O} \left(\Delta \omega^3\right) \notag\\
		&= \frac{1}{2}\sum_{j=i+1}^{I} \int_{\omega_{j-1/2}}^{\omega_{j+1/2}}   \mathcal{K}_3(\omega_i,\mu,\omega_i) N_i^2 f( \mu ) \dd \mu + \sum_{j=i+1}^{I} \int_{\omega_j}^{R}\dd \mu\mathcal{K}_3(\omega_i,\mu,\omega_j) N_i N_j f(\omega_i + \mu - \omega_j) +\mathcal{O} \left(\Delta \omega^2\right) \notag\\
		& = \frac{1}{2}\sum_{j=i+1}^{I}  \left(\mathcal{K}^3_{i,j,i} + \mathcal{K}^3_{i,j,j} \right) N_i^2 N_j+ \mathcal{Q}_i^{10} +\mathcal{O} \left(\Delta \omega^2\right). \notag
	\end{align}
We now examine the third term on the right-hand side of $F_i^3$ 
\begin{align}\label{3_25}
	I^3_3
	&= - \int_{\omega_{i-1/2}}^{\omega_{i+1/2}}
	\int_{\omega}^{R}
	\int_{\omega}^{\mu}
	\, \mathrm{d}\eta\, \mathrm{d}\mu\, \mathrm{d}\omega
	\mathcal{K}_3(\omega,\mu,\eta)\,
	f(\omega)\, f(\mu)\, f(\eta) \\[0.3em]
	&= - \sum_{j=i+1}^{I} 
	\int_{\omega_i}^{\omega_j}\,
	\mathrm{d}\eta
	\mathcal{K}_3(\omega_i,\omega_j,\eta)\,
	N_i N_j\, f(\eta) \notag \\[0.3em]
	&= -\frac{1}{2} \sum_{j=i+1}^{I} 
	\mathcal{K}^3_{i,j,i}\, N_i^2 N_j
	\;-\; \frac{1}{2} \sum_{j=i+1}^{I} 
	\mathcal{K}^3_{i,j,j}\, N_i N_j^2  \notag \\
	&\qquad 
	-\sum_{j=i+2}^{I} 
	\sum_{k=i+1}^{j-1}
	\int_{\omega_{k-1/2}}^{\omega_{k+1/2}}
	H(\omega_i,\omega_j,\eta)\,
	N_i N_j\, f(\eta)\,
	\mathrm{d}\eta
	+ \mathcal{O}(\Delta\omega^2) \notag \\[0.3em]
	&= -\frac{1}{2} \sum_{j=i+1}^{I} 
	\mathcal{K}^3_{i,j,i}\, N_i^2 N_j
	\;-\; \frac{1}{2} \sum_{j=i+1}^{I} 
	\mathcal{K}^3_{i,j,j}\, N_i N_j^2
	+ \mathcal{Q}_i^{11}
	+\mathcal{O}(\Delta\omega^2). \notag
\end{align}

Next, we consider the final term in $F_i^3$ 
\begin{align}\label{3_26}
	I^3_4
	&= - \int_{\omega_{i-1/2}}^{\omega_{i+1/2}}
	\int_{\omega_{i+1/2}}^{R}
	\int_{\omega}^{\omega_{i+1/2}}\,
	\mathrm{d}\eta\, \mathrm{d}\mu\, \mathrm{d}\omega
	\mathcal{K}_3(\omega,\mu,\eta)\,
	f(\omega)\, f(\mu)\, f(\omega+\mu-\eta) \\[0.3em]
	&\quad 
	- \int_{\omega_{i-1/2}}^{\omega_{i+1/2}}
	\int_{\omega_{i+1/2}}^{R}
	\int_{\omega_{i+1/2}}^{R}\,
	\mathrm{d}\eta\, \mathrm{d}\mu\, \mathrm{d}\omega
	\mathcal{K}_3(\omega,\mu,\eta)\,
	f(\omega)\, f(\mu)\, f(\omega+\mu-\eta) \notag \\[0.3em]
	&= \frac{1}{2} \sum_{j=i+1}^{I}
	\mathcal{K}^3_{i,j,i}\, N_i N_j^2
	+ \int_{\omega_{i+1/2}}^{\omega_{i+1}}\,
	\mathrm{d}\eta
	\mathcal{K}_3(\omega_i,\omega_j,\eta)\,
	N_i N_j\, f(\omega_i+\omega_j-\eta) \notag \\
	&\quad
	+\sum_{j=i+2}^{I}
	\sum_{k=i+1}^{j-1}
	\int_{\omega_{k-1/2}}^{\omega_{k+1/2}}\,
	\mathrm{d}\eta
	\mathcal{K}_3(\omega_i,\omega_j,\eta)\,
	N_i N_j\, f(\omega_i+\omega_j-\eta) \notag \\
	&\quad
	+\sum_{j=i+2}^{I}
	\int_{\omega_{j-1/2}}^{\omega_j}
	\mathcal{K}_3(\omega_i,\omega_j,\eta)\,
	N_i N_j\, f(\omega_i+\omega_j-\eta)\,
	\mathrm{d}\eta
	+ \mathcal{O}(\Delta\omega^2) \notag \\[0.3em]
	&= \frac{1}{2} \sum_{j=i+1}^{I}
	\mathcal{K}^3_{i,j,i}\, N_i N_j^2
	+ \frac{1}{2} \sum_{j=i+1}^{I}
	\mathcal{K}^3_{i,j,j}\, N_i^2 N_j
	+ \mathcal{Q}_i^{12}
	+ \mathcal{O}(\Delta\omega^2). \notag
\end{align}

Combining equations \eqref{3_23}--\eqref{3_26} in equation \eqref{2_5}, we obtain  
\begin{align}\label{3_27}
	F^3_i(t)  
	= \sum_{k=9}^{12} \mathcal{Q}_i^k 
	+ \mathcal{O}\!\left(\Delta \omega^{2}\right)
	= \hat{\mathcal{Q}}_i^3 
	+ \mathcal{O}\!\left(\Delta \omega^{2}\right).
\end{align}

To evaluate the discretization error of the terms \(F_i^k\) for \(k = 4, \ldots, 8\), we follow the procedure described in \cite{das2024numerical}. Specifically,
\begin{align}
	F^4_i 
	&= \sum_{(j,k)\in \mathcal{I}_{j,k}^i}
	\mathcal{K}_1(\omega_k+\omega_j,\,\omega_j)\,
	N_j N_k
	+ \mathcal{O}\!\left(\Delta \omega^3\right)
	= \mathcal{Q}_i^{13}
	+ \mathcal{O}\!\left(\Delta \omega^3\right),\\[0.3em]
	F^5_i 
	&= \sum_{j=1}^{I}
	\mathcal{K}_2(\omega_i,\,\omega_j)\,
	N_i N_j
	+ \mathcal{O}\!\left(\Delta \omega^3\right)
	= \mathcal{Q}_i^{14}
	+ \mathcal{O}\!\left(\Delta \omega^3\right),\\[0.3em]
	F^6_i 
	&= \sum_{j=1}^{i-1}
	\mathcal{K}_3(\omega_i,\,\omega_j)\,
	N_i N_j
	+ \mathcal{O}\!\left(\Delta \omega^3\right)
	= \mathcal{Q}_i^{15}
	+ \mathcal{O}\!\left(\Delta \omega^2\right),\\[0.3em]
	F^7_i 
	&= \sum_{j=i+1}^{I}
	\mathcal{K}_4(\omega_i,\,\omega_j)\,
	N_i N_j
	+ \mathcal{O}\!\left(\Delta \omega^2\right)
	= \mathcal{Q}_i^{16}
	+ \mathcal{O}\!\left(\Delta \omega^2\right),\\[0.3em]
	F^8_i 
	&= \sum_{(j,k)\in \mathcal{J}_{j,k}^i}
	\mathcal{K}_4(\omega_j - \omega_k,\,\omega_j)\,
	N_j N_k
	+ \mathcal{O}\!\left(\Delta \omega^3\right)
	= \mathcal{Q}_i^{18}
	+ \mathcal{O}\!\left(\Delta \omega^3\right).
\end{align}

Now, by substituting all the obtained discretization error estimates into 
Definition~\ref{def_1}, we can evaluate the overall discretization error:
\begin{align*}
	\epsilon_i 
	= \left| J_i(\mathbf{N}) - \hat{J}_i(\mathbf{N}) \right|
	= \sum_{k=1}^{3} \left| F_i^k(\mathbf{N}) - \hat{\mathcal{Q}}_i^k(\mathbf{N}) \right|
	+ \sum_{k=4}^{8} \left| F_i^k(\mathbf{N}) - \mathcal{Q}_i^k(\mathbf{N}) \right|
	= \mathcal{O}(\Delta \omega^2).
\end{align*}
Consequently, we obtain  
\[
\|\epsilon\| = \sum_{i=1}^{I} |\epsilon_i|
= \mathcal{O}(\Delta \omega).
\]

\emph{Convergence:}  
To complete the convergence proof, we combine the above estimates with 
Proposition~\ref{Prop_2}, which ensures that all the required conditions for the 
convergence analysis in \cite{linz1975convergence} are satisfied. Hence, the proposed 
numerical scheme \eqref{3_1} is convergent. Moreover, the scheme exhibits 
first-order convergence, matching the order of consistency.

\end{proof}

	\section{Numerical Tests}
In this section, we present several test problems to assess the performance of the newly proposed
finite volume scheme (FVS) given in~\eqref{2_12}, and compare the numerical results with the corresponding
theoretical predictions. The mixed $3$-- and $4$--wave kinetic equation~\eqref{4wave} is solved using the
FVS~\eqref{2_12} under a variety of initial conditions.

For these numerical experiments, we consider two different wave--frequency relations,
\[
\omega(|k|) = |k|^\rho, \qquad \rho = 2 \ \text{or} \ 3.
\]
Our investigation proceeds in two directions. 
First, for each choice of the frequency function $\omega(|k|)$, we examine how the solution behaves
for different values of the parameters $C_1$ and $C_2$.
Second, we analyze the effect of varying the homogeneity degrees $\sigma$ and $\gamma$ while keeping
the frequency function $\omega(|k|)$ fixed.

To study the qualitative behavior of the numerical solution, we monitor the evolution of the total
third moment and total energy of the system. Their discrete approximations are given by
\begin{align}
	M_3(t) &= \sum_{i=1}^{I} \omega_i^3\,|k|^2(\omega_i)\, |k|'(\omega_i)\, N_i, \label{4_1}\\[4pt]
	E(t) &= \sum_{i=1}^{I} \omega_i\, |k|^2(\omega_i)\, |k|'(\omega_i)\, N_i. \label{4_2}
\end{align}

In each test problem, the computation is performed over the time interval \([0, 30]\) using a time step of \(\Delta t = 0.1\). All simulations are carried out on a uniform grid. It is observed that the choice of the spatial step size \(\Delta \omega\) depends on the initial condition \(f^{\mathrm{in}}(\omega)\). For the first two test cases, we set \(\Delta \omega = 0.33\), while for the final test case we use \(\Delta \omega = 0.1\).

	\subsection{Test Case I}\label{test_1}
For the first test case, we consider the analytic initial data
\begin{align}\label{4_3}
	f^{\mathrm{in}}(\omega) = \omega e^{-\omega}, \qquad \text{for } \omega \ge 0.
\end{align}

To implement the FVS \eqref{2_12}, we begin by choosing equal values for the degrees of homogeneity $\sigma$ and $\gamma$, and we define the computational domain as $[10^{-9},\, 10]$ with a step size $\Delta \omega = 0.33$. Under this setup, we first plot the initial distribution $f^{\mathrm{in}}(\omega)$ in Figure~\ref{test1.1}, and the corresponding final state $f(T,\omega)$ at $T=30$ in Figure~\ref{test1.2}, for the frequency function $\omega(|k|) = |k|^2$ and $C_1=C_2=1$.

	\begin{figure}[H]
		\begin{subfigure}{.45\textwidth}
			\centering
			\includegraphics[width=1.0\textwidth]{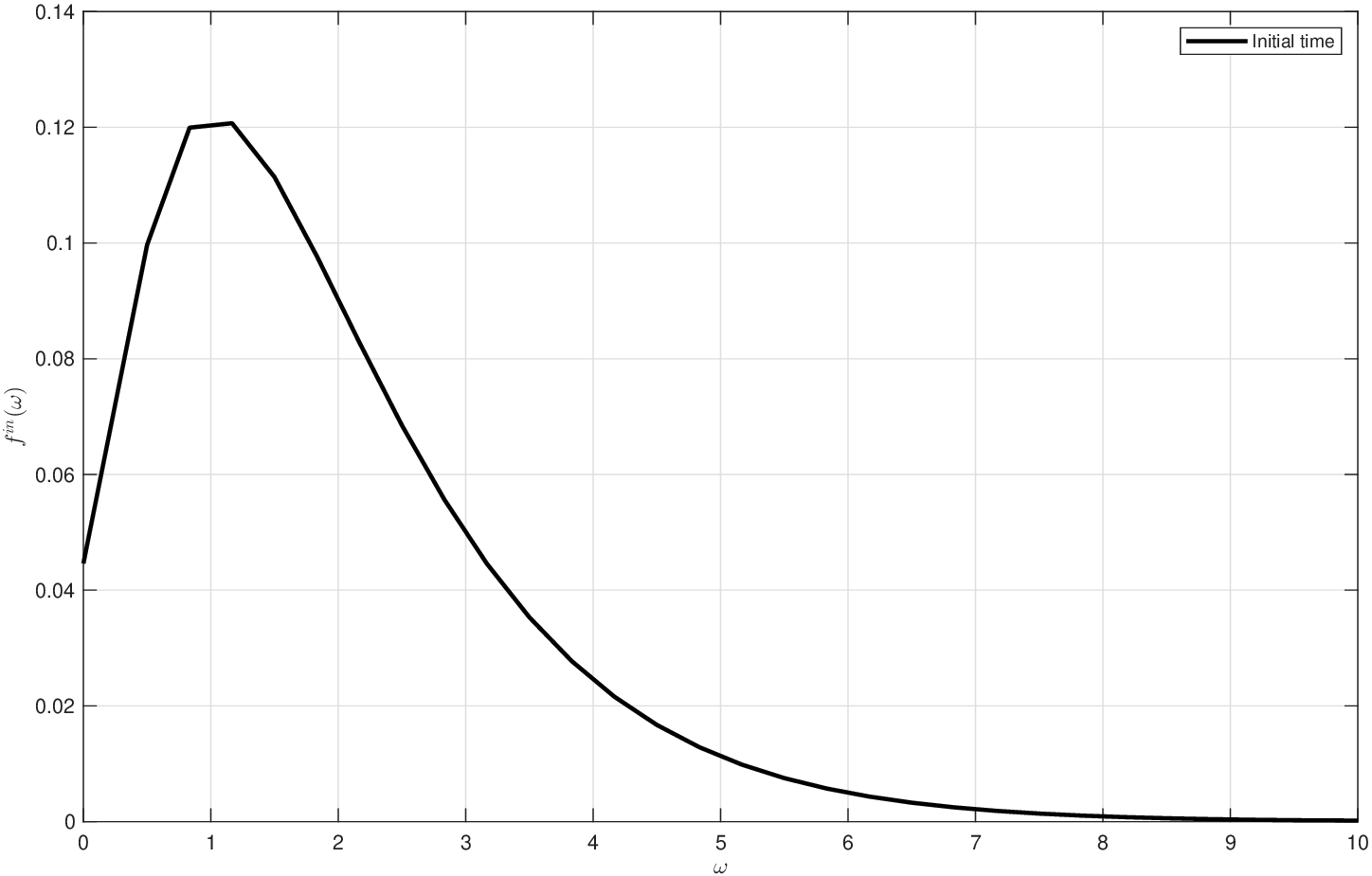}
			\caption{Initial Time}
			\label{test1.1}
		\end{subfigure}
		\begin{subfigure}{.45\textwidth}
			\centering
			\includegraphics[width=1.0\textwidth]{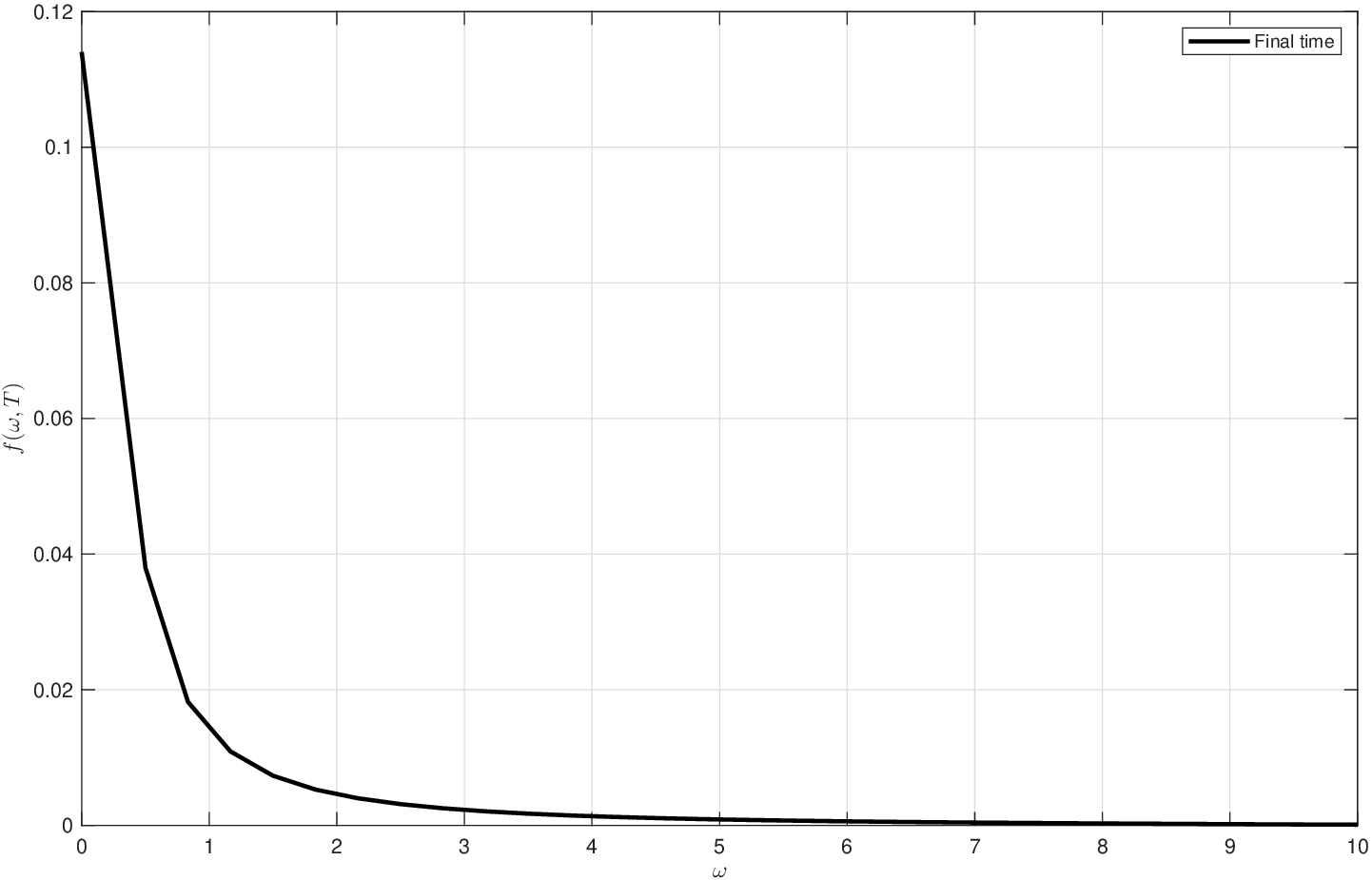}
			\caption{Final Time}
			\label{test1.2}
		\end{subfigure}
		\caption{Evolution of wave density $f(\omega)$ at initial and final time.}
		\label{test1}
	\end{figure}
	
Considering the initial data given in \eqref{4_3} and the frequency function 
$|k|(\omega) = \omega^{1/2}$, the time evolution of the total energy $E(t)$  and the third-order moment $M_3(t)$ is illustrated in 
Figure~\ref{f1} for different values of the parameters $C_1$ and $C_2$, while 
the values of $\sigma$ and $\gamma$ are kept fixed.

	\begin{figure}[H]
		\begin{subfigure}{.45\textwidth}
			\centering
			\includegraphics[width=1.0\textwidth]{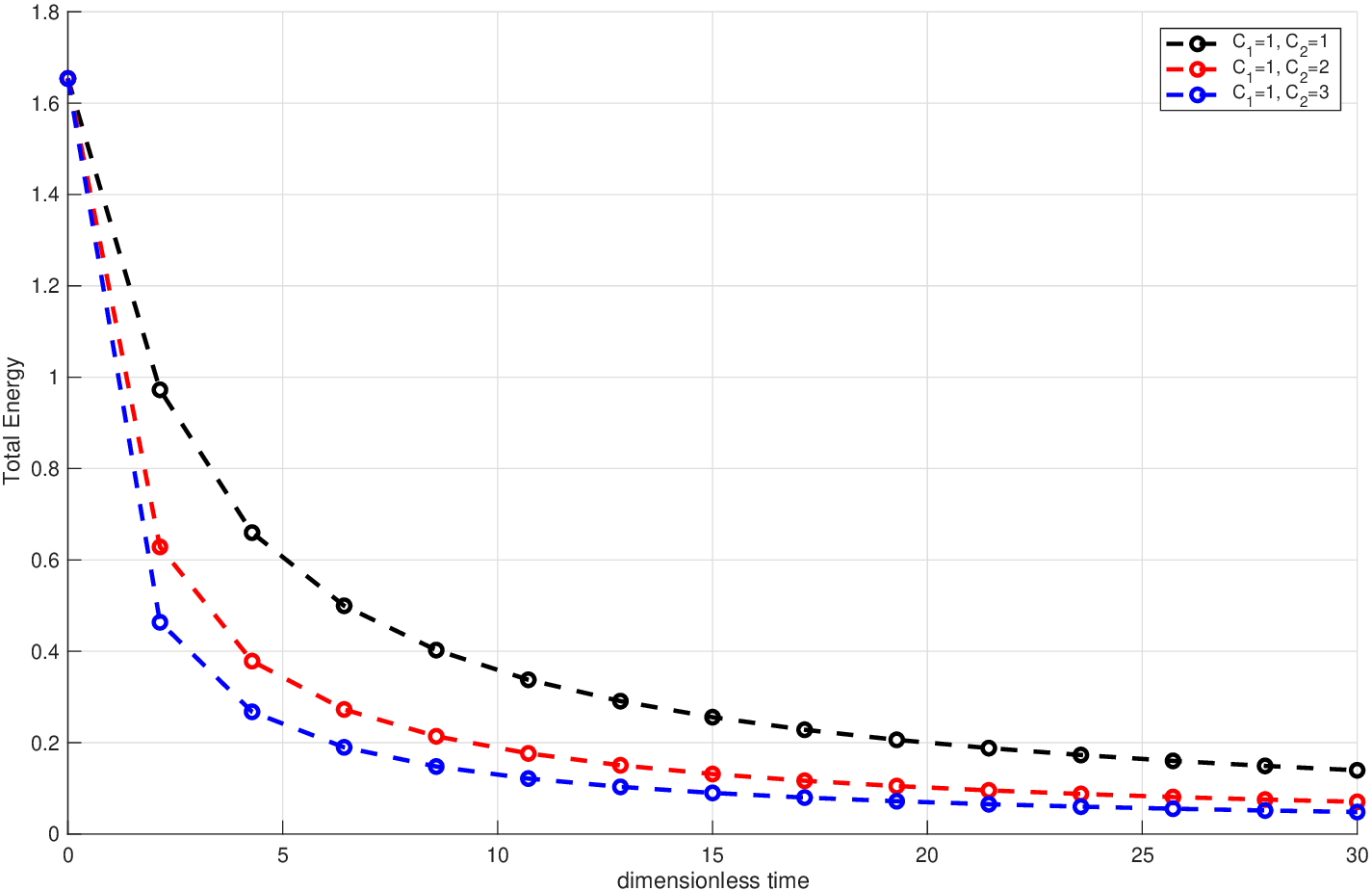}
			\caption{}
			\label{f1.1}
		\end{subfigure}
		\begin{subfigure}{.45\textwidth}
			\centering
			\includegraphics[width=1.0\textwidth]{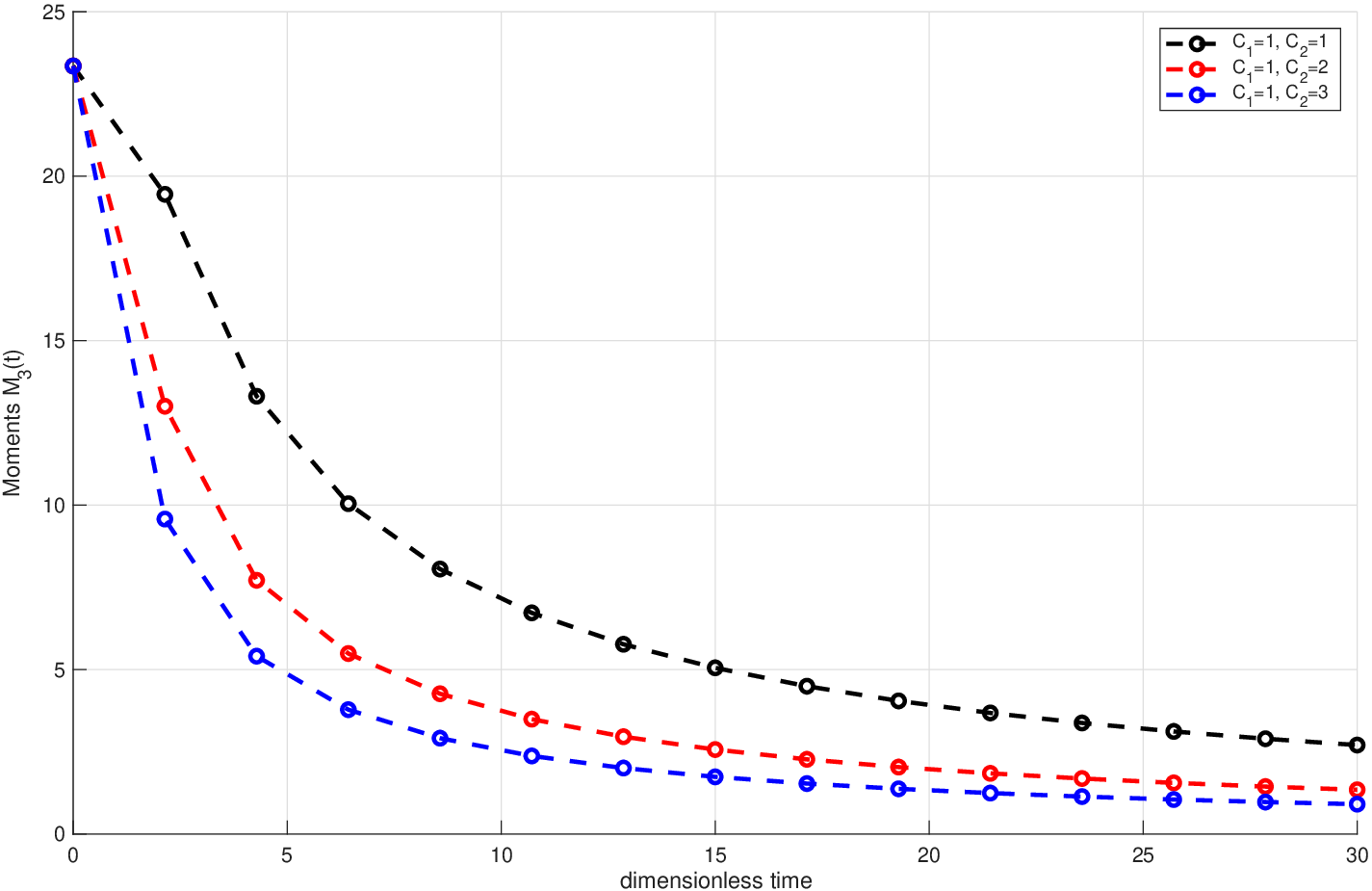}
			\caption{}
			\label{f1.3}
		\end{subfigure}
		\caption{Time evolution of the total energy (A) and third order moment (B) for different values of $C_1$ and $C_2$ when $\sigma=0.50$, $\gamma = 0.50$, and  $\ds \left|k\right|\left(\omega\right) = \sqrt{\omega}$.}
		\label{f1}
	\end{figure}
	
Using a setup similar to that in Figure~\ref{f1}, we plot the total energy, 
mass, and third-order moment in Figure~\ref{f2} for the frequency function 
$|k|(\omega) = \omega^{1/3}$.

		\begin{figure}[H]
		\begin{subfigure}{.45\textwidth}
			\centering
			\includegraphics[width=1.0\textwidth]{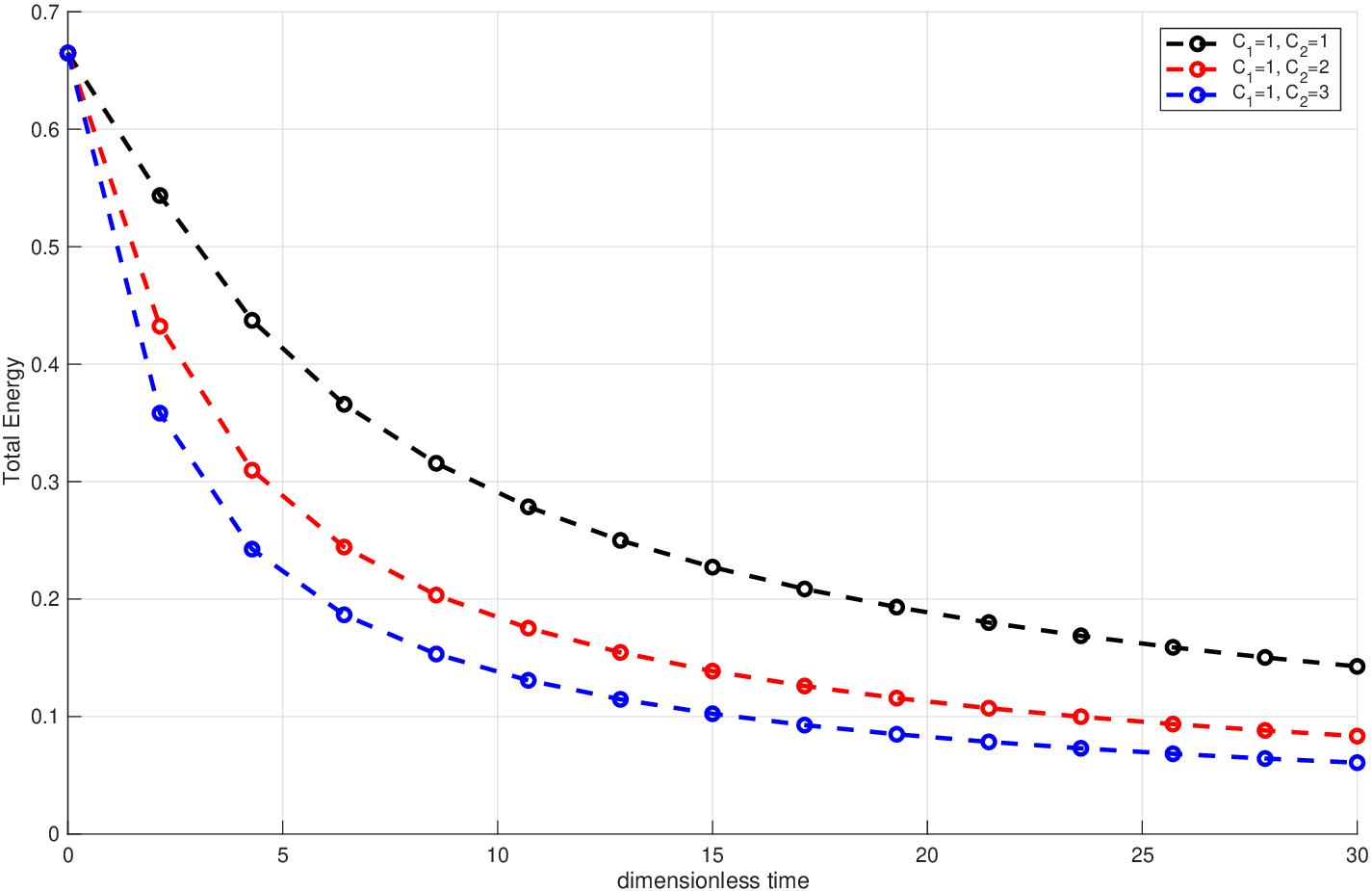}
			\caption{}
			\label{f2.1}
		\end{subfigure}
		\begin{subfigure}{.45\textwidth}
			\centering
			\includegraphics[width=1.0\textwidth]{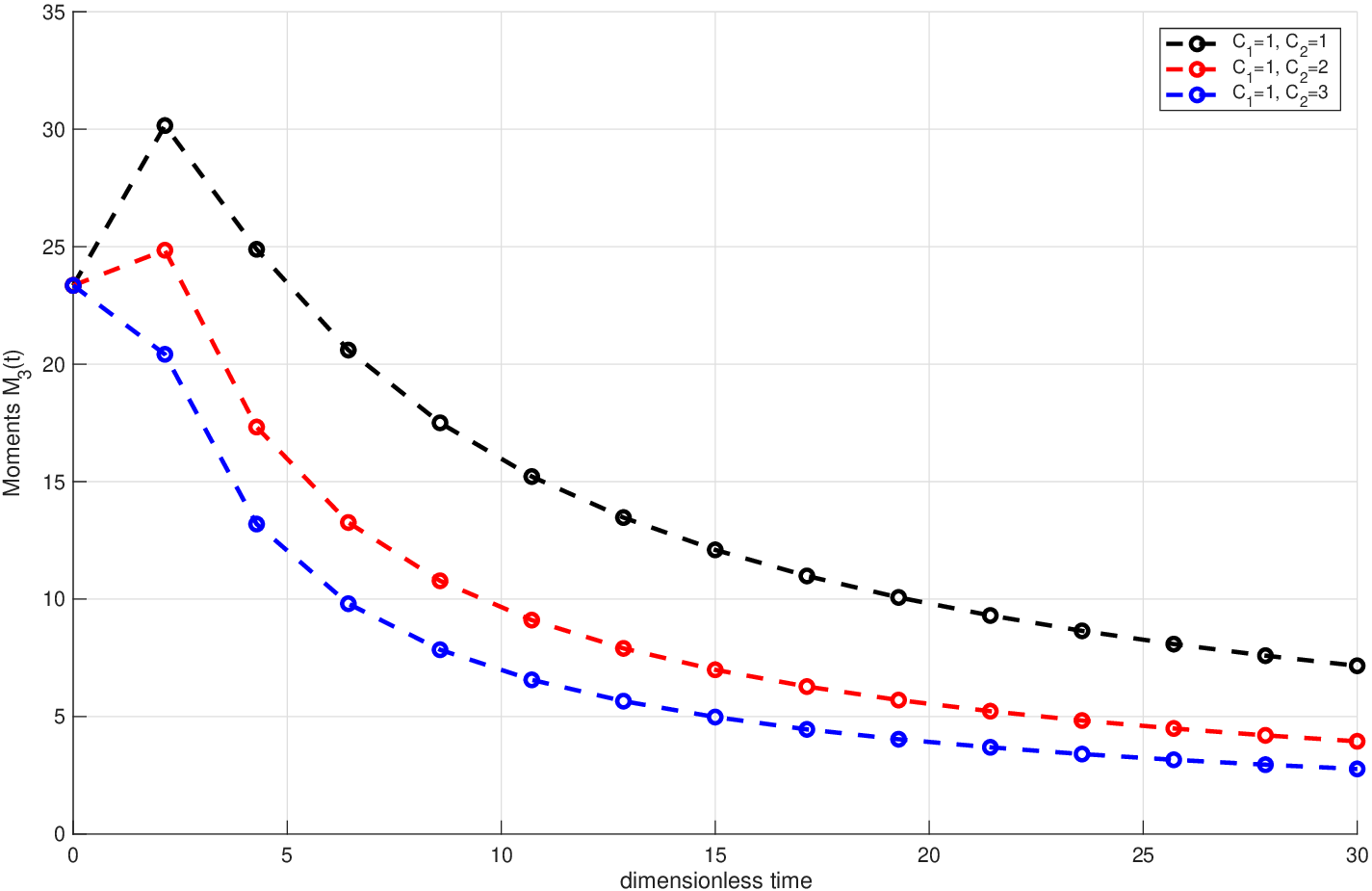}
			\caption{}
			\label{f2.3}
		\end{subfigure}
		\caption{Time evolution of the total energy (A) and third order moment (B) for different values of $C_1$ and $C_2$ when $\sigma=0.50$, $\gamma = 0.50$, and  $\ds \left|k\right|\left(\omega\right) = {\omega}^{1/3}$.}
		\label{f2}
	\end{figure}

We conclude the first test case by fixing the parameter values 
$C_1 = C_2 = 1$ and varying the degrees of homogeneity. 
The total energy and third-order moment are plotted in 
Figure~\ref{f1_sigma} for different values of $\sigma$ and $\gamma$, 
corresponding to the wave frequency function $|k|(\omega) = \omega^{1/2}$. The plots demonstrate that the energy and third-order moment vanish as time evolves in the considered bounded domain.

\begin{figure}[htp]
	\begin{subfigure}{.45\textwidth}
		\centering
		\includegraphics[width=1.0\textwidth]{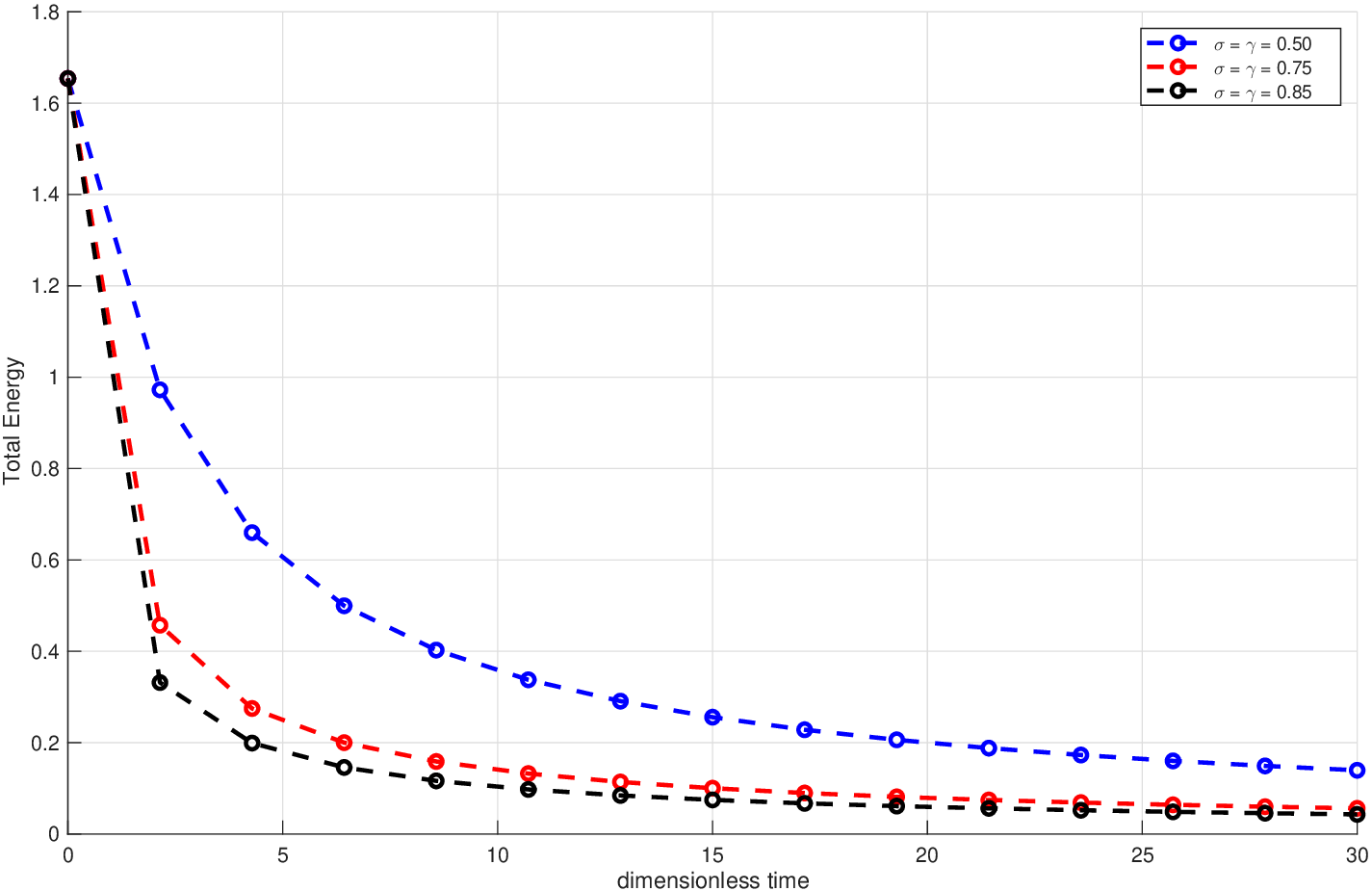}
		\caption{}
		\label{f1_sigma.1}
	\end{subfigure}
	\begin{subfigure}{.45\textwidth}
		\centering
		\includegraphics[width=1.0\textwidth]{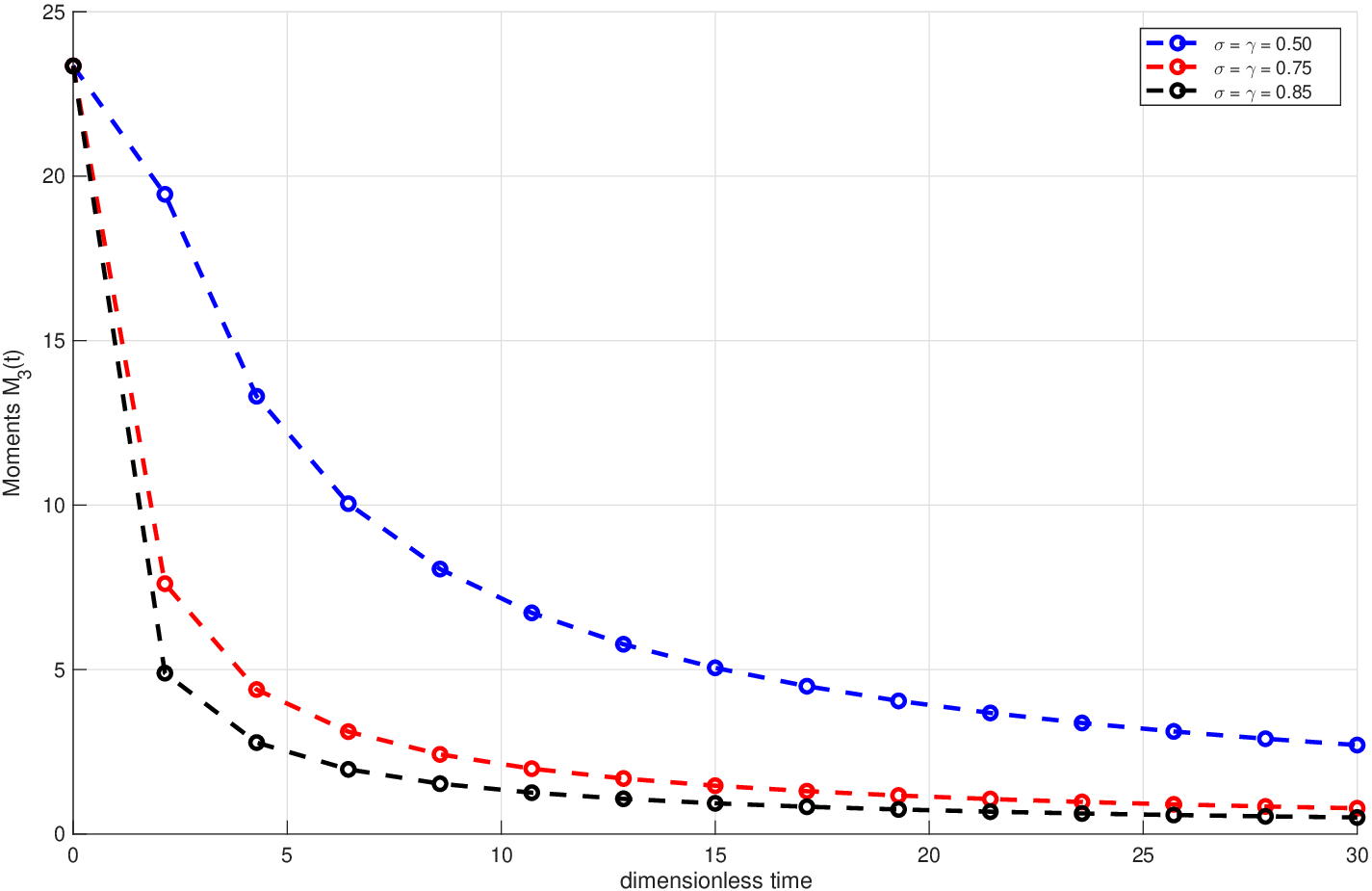}
		\caption{}
		\label{f1_sigma.3}
	\end{subfigure}
	\caption{Time evolution of the total energy (A) and third order moment (B) for different values of $\sigma$ and $\gamma$ when  $\ds \left|k\right|\left(\omega\right) = \sqrt{\omega}$.}
	\label{f1_sigma}
\end{figure}

	\subsection{Test Case II}
In the second numerical test, we consider an initial condition that is compactly supported in the low-frequency region and defined as follows:
\[
f^{\text{in}}(\omega) = 
\begin{cases}
	\exp\!\left(\dfrac{5}{|\omega - 5|^2 - 1}\right), & \text{if } |\omega - 5| \le 1, \\[0.4cm]
	0, & \text{if } |\omega - 5| > 1.
\end{cases}
\]

For the numerical computation, we use the same computational domain as in Test Case~\ref{test1}.

In Figures~\ref{f3} and \ref{f4}, we present the evolution of the total energy and the third-order moments for the wave frequency functions $\lvert k \rvert(\omega) = \omega^{1/2}$ and $\lvert k \rvert(\omega) = \omega^{1/3}$, respectively. From the plots, we observe that, for both frequency functions, the total energy exhibits asymptotic decay over time for different values of the parameters $C_1$ and $C_2$.

	\begin{figure}[htp]
		\begin{subfigure}{.45\textwidth}
			\centering
			\includegraphics[width=1.0\textwidth]{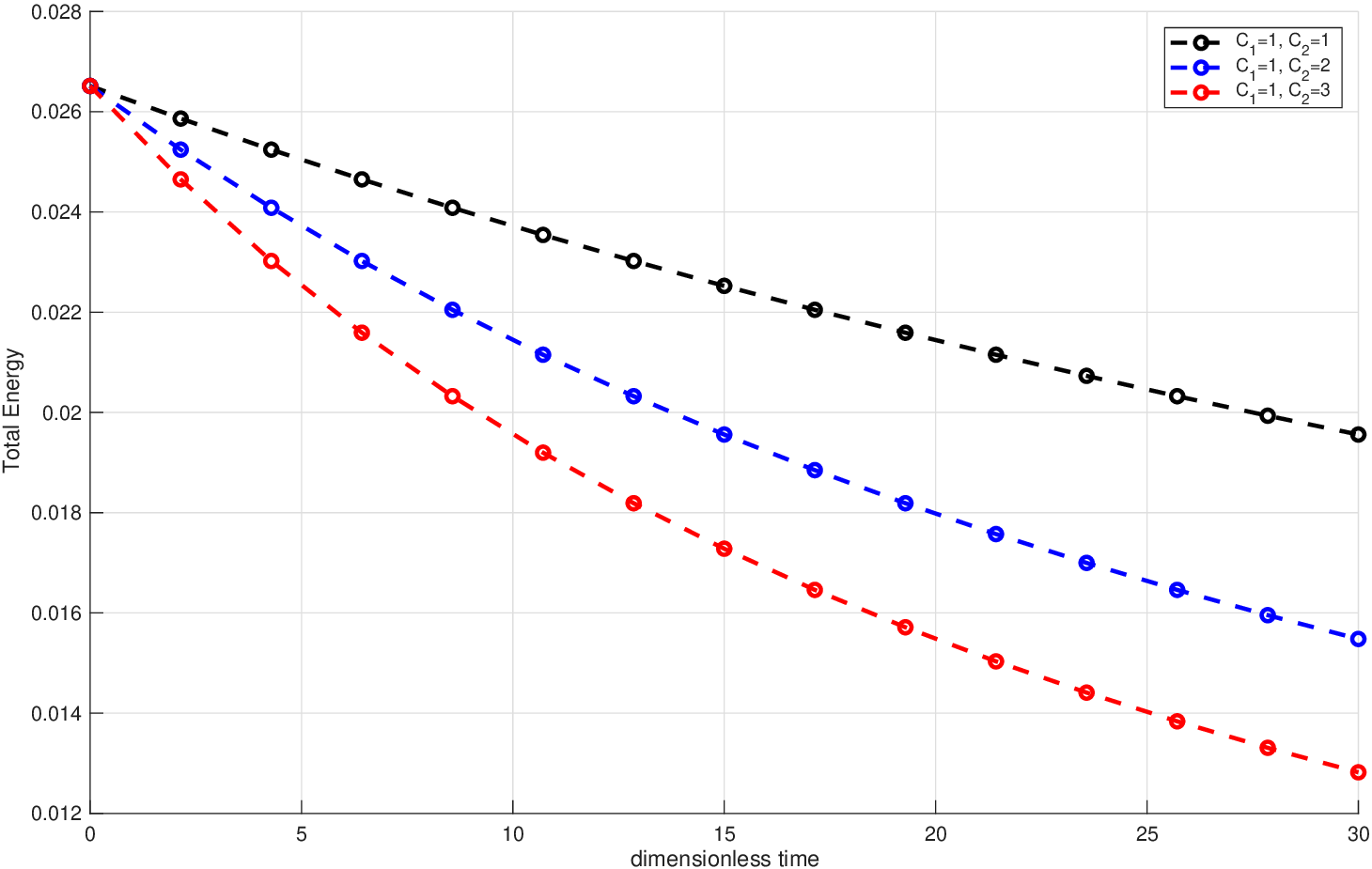}
			\caption{}
			\label{f3.1}
		\end{subfigure}
		\begin{subfigure}{.45\textwidth}
			\centering
			\includegraphics[width=1.0\textwidth]{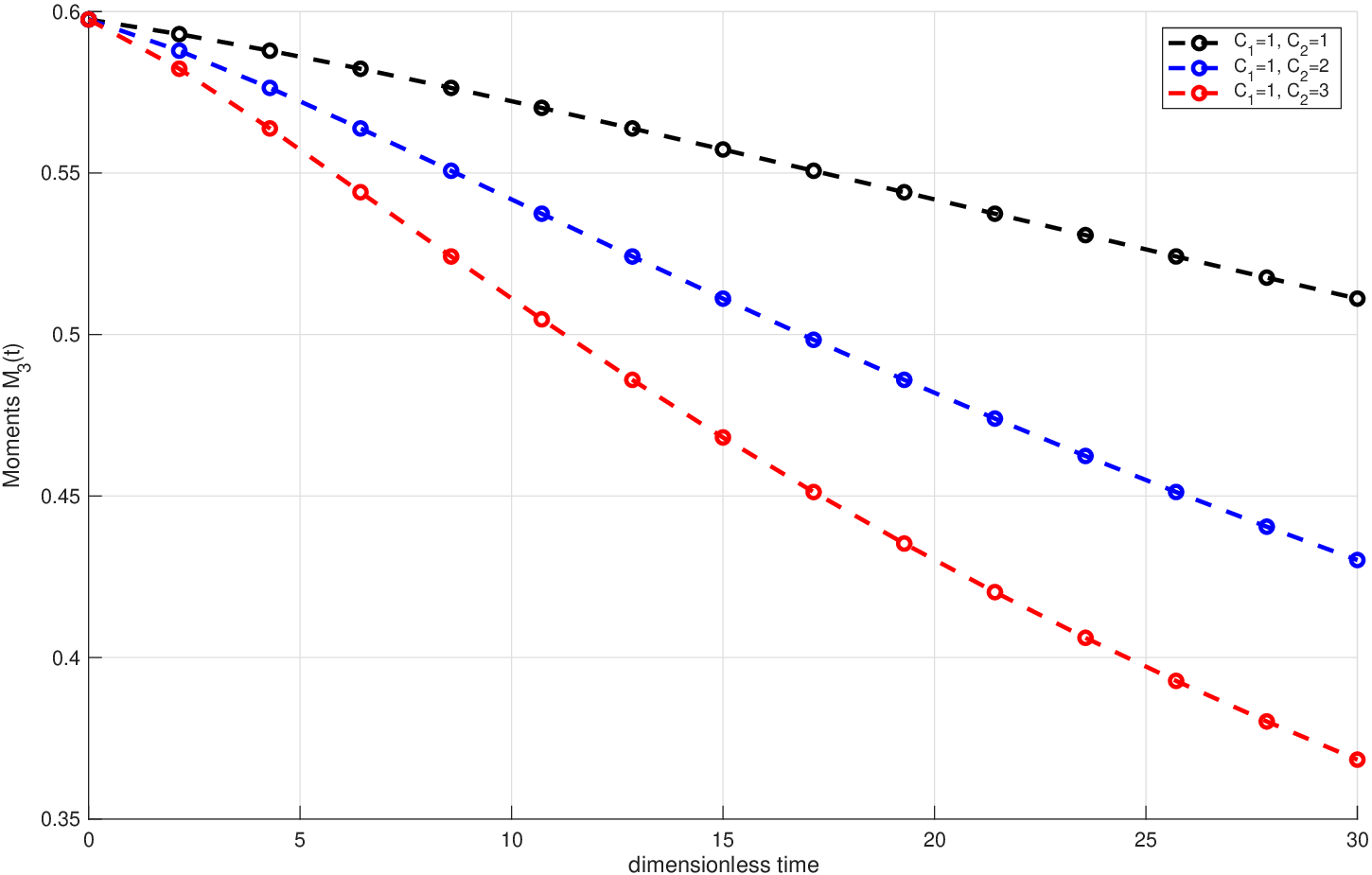}
			\caption{}
			\label{f3.3}
		\end{subfigure}
		\caption{Time evolution of the total energy (A) and third order moment (B) for different values of $C_1$ and $C_2$ when $\sigma=0.50$, $\gamma = 0.50$, and  $\ds \left|k\right|\left(\omega\right) = \sqrt{\omega}$.}
		\label{f3}
	\end{figure}
	
	\begin{figure}[htp]
		\begin{subfigure}{.45\textwidth}
			\centering
			\includegraphics[width=1.0\textwidth]{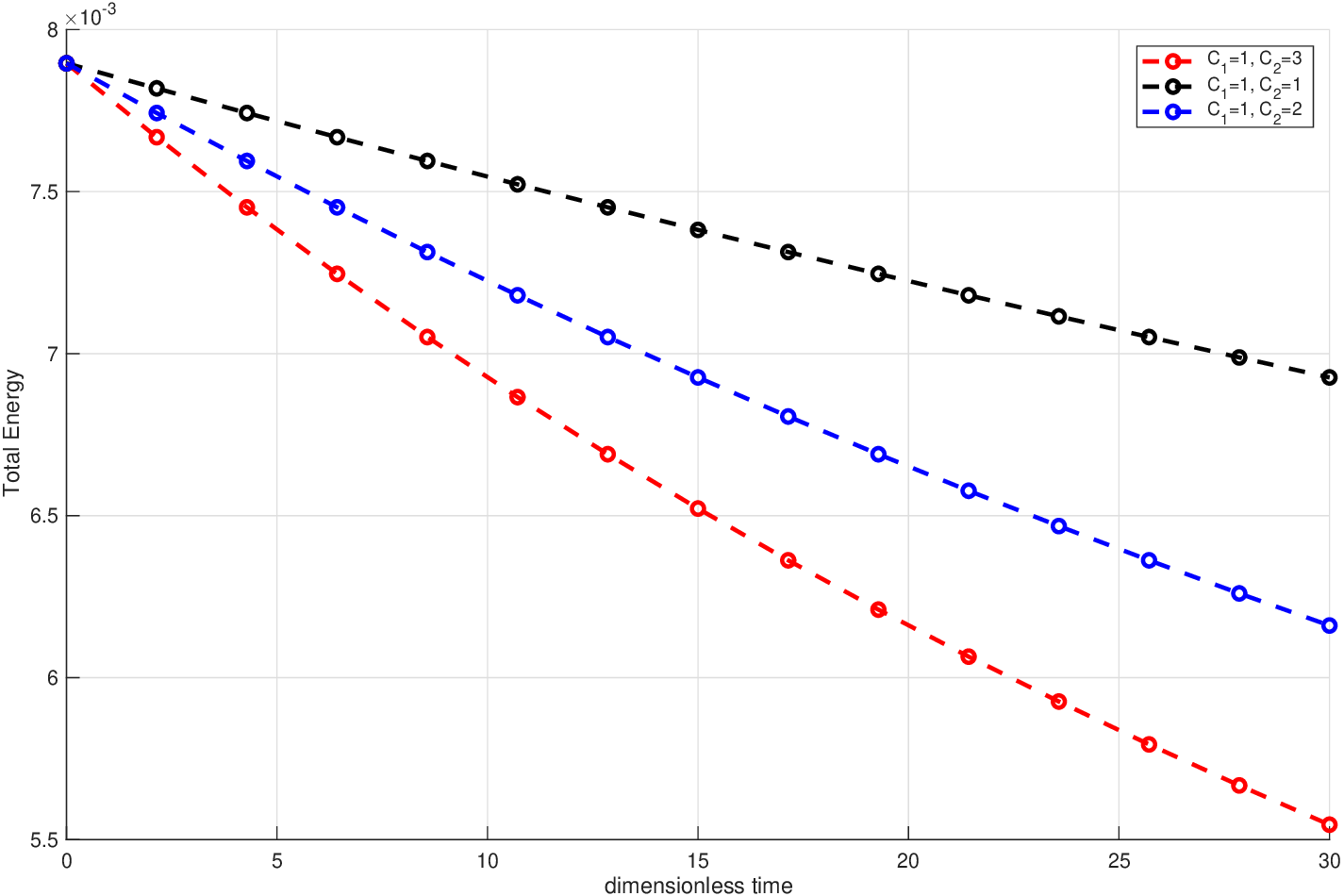}
			\caption{}
			\label{f4.1}
		\end{subfigure}
		\begin{subfigure}{.45\textwidth}
			\centering
			\includegraphics[width=1.0\textwidth]{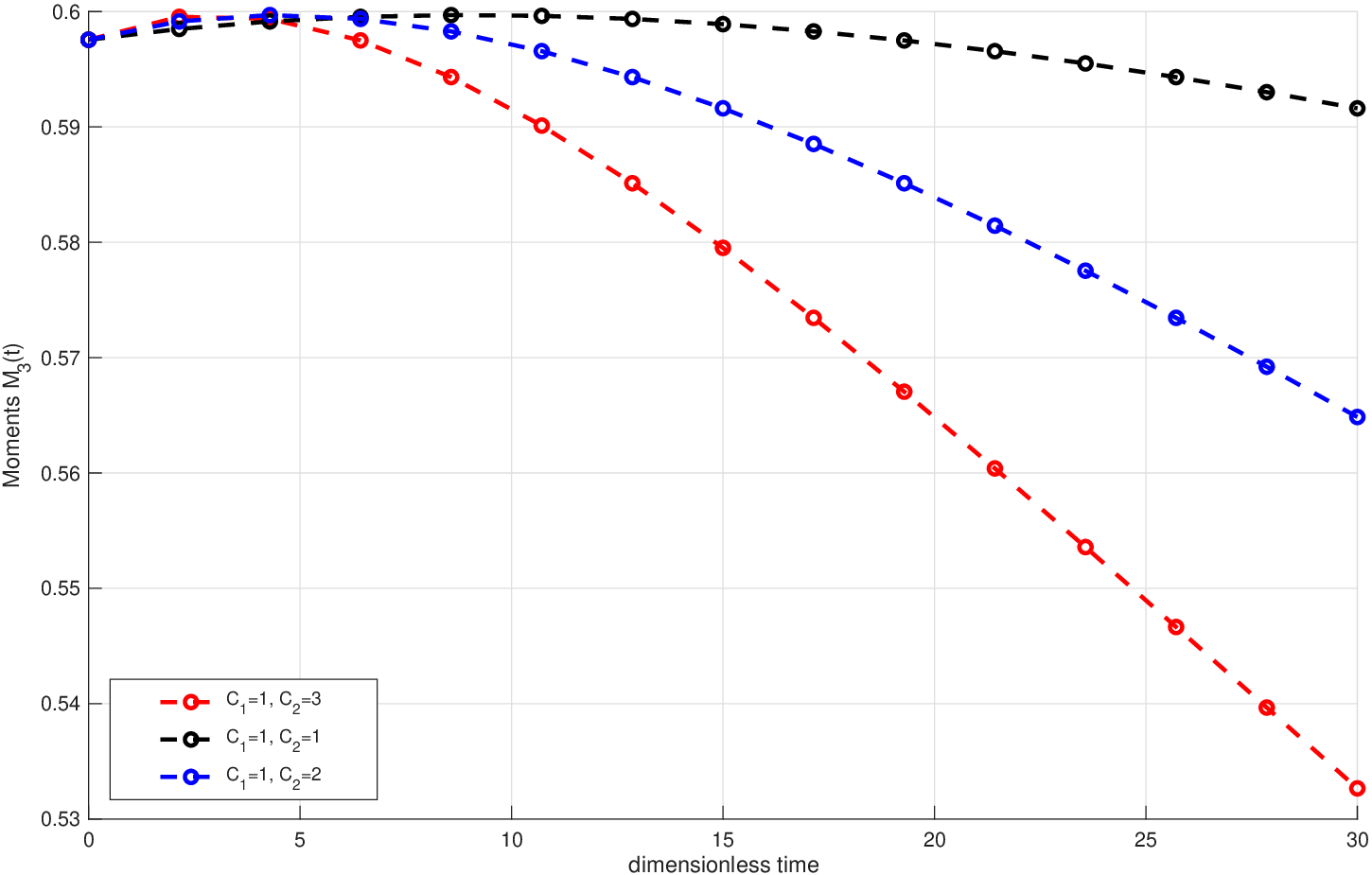}
			\caption{}
			\label{f4.3}
		\end{subfigure}
		\caption{Time evolution of the total energy  (A) and third order moment (B) for different values of $C_1$ and $C_2$ when $\sigma=0.50$, $\gamma = 0.50$, and  $\ds \left|k\right|\left(\omega\right) = {\omega}^{1/3}$.}
		\label{f4}
	\end{figure}
	
Similar to the previous test case, this numerical experiment concludes by examining the temporal evolution of the total energy and the third-order moment, as shown in Figure~\ref{f2_sigma}, for different values of the degrees of homogeneity $\sigma$ and $\gamma$, while keeping $C_1 = C_2 = 1$ fixed.

	\begin{figure}[htp]
		\begin{subfigure}{.45\textwidth}
			\centering
			\includegraphics[width=1.0\textwidth]{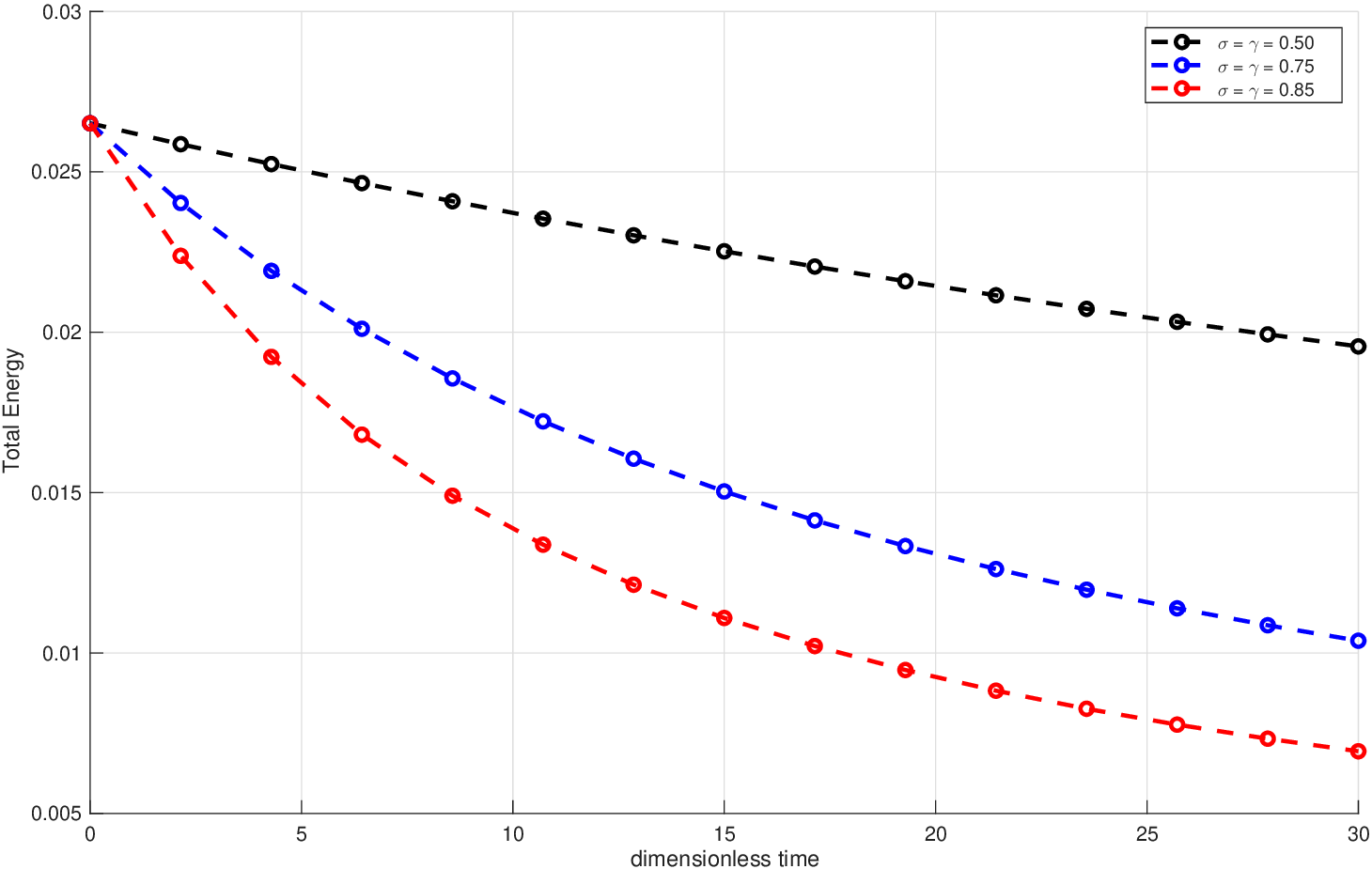}
			\caption{}
			\label{f2_sigma.1}
		\end{subfigure}
		\begin{subfigure}{.45\textwidth}
			\centering
			\includegraphics[width=1.0\textwidth]{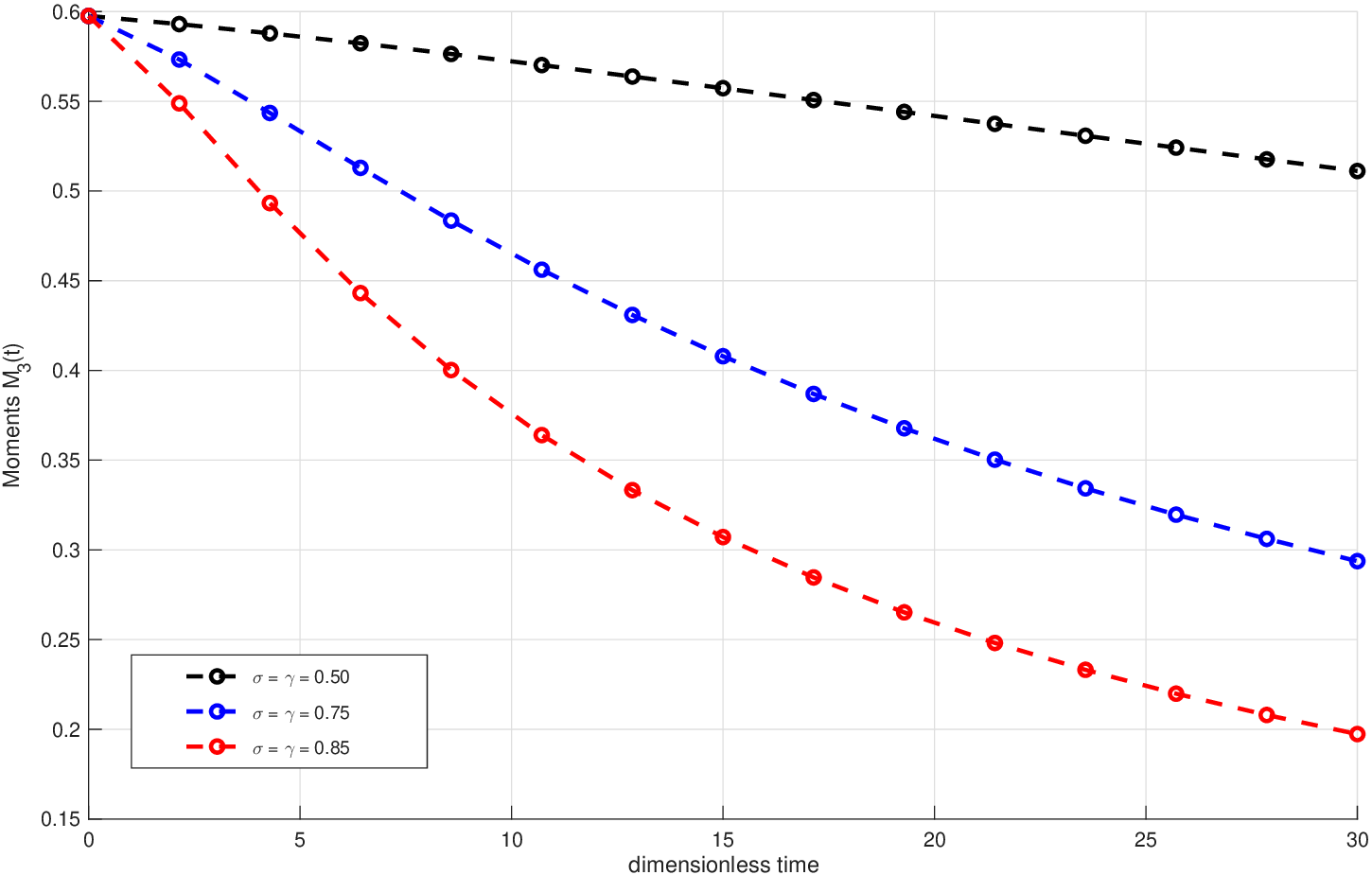}
			\caption{}
			\label{f2_sigma.3}
		\end{subfigure}
		\caption{Time evolution of the total energy (A) and third order moment (B) for different values of $\sigma$ and $\gamma$ when  $\ds \left|k\right|\left(\omega\right) = \sqrt{\omega}$.}
		\label{f2_sigma}
	\end{figure}

	\subsection{Test Case III}
	In our last test we consider mono disperse type of initial data:
	\begin{align*}
		f^{in}(\omega)  = \left\{\begin{array}{ll}
			1, &\mbox{if}\quad|\omega-1|\le \frac{1}{2} \vspace{0.2cm}\\
			0 , &\mbox{elsewhere}.
		\end{array}\right.
	\end{align*}
	
	To compute the solution, we consider the computational domain $\ds [10^{-9}, 2]$, which is divided into twenty uniform sub-intervals. Figure \ref{test3} illustrates the initial and final states ($T=30$) of the wave density function $f(t,\omega)$ for the specific parameter values $C_1 = C_2 = 1$ and $\sigma = \gamma = 0.5$, corresponding to the frequency function $\ds |k|(\omega) = \omega^{1/2}$.
	\begin{figure}[H]
		\begin{subfigure}{.45\textwidth}
			\centering
			\includegraphics[width=1.0\textwidth]{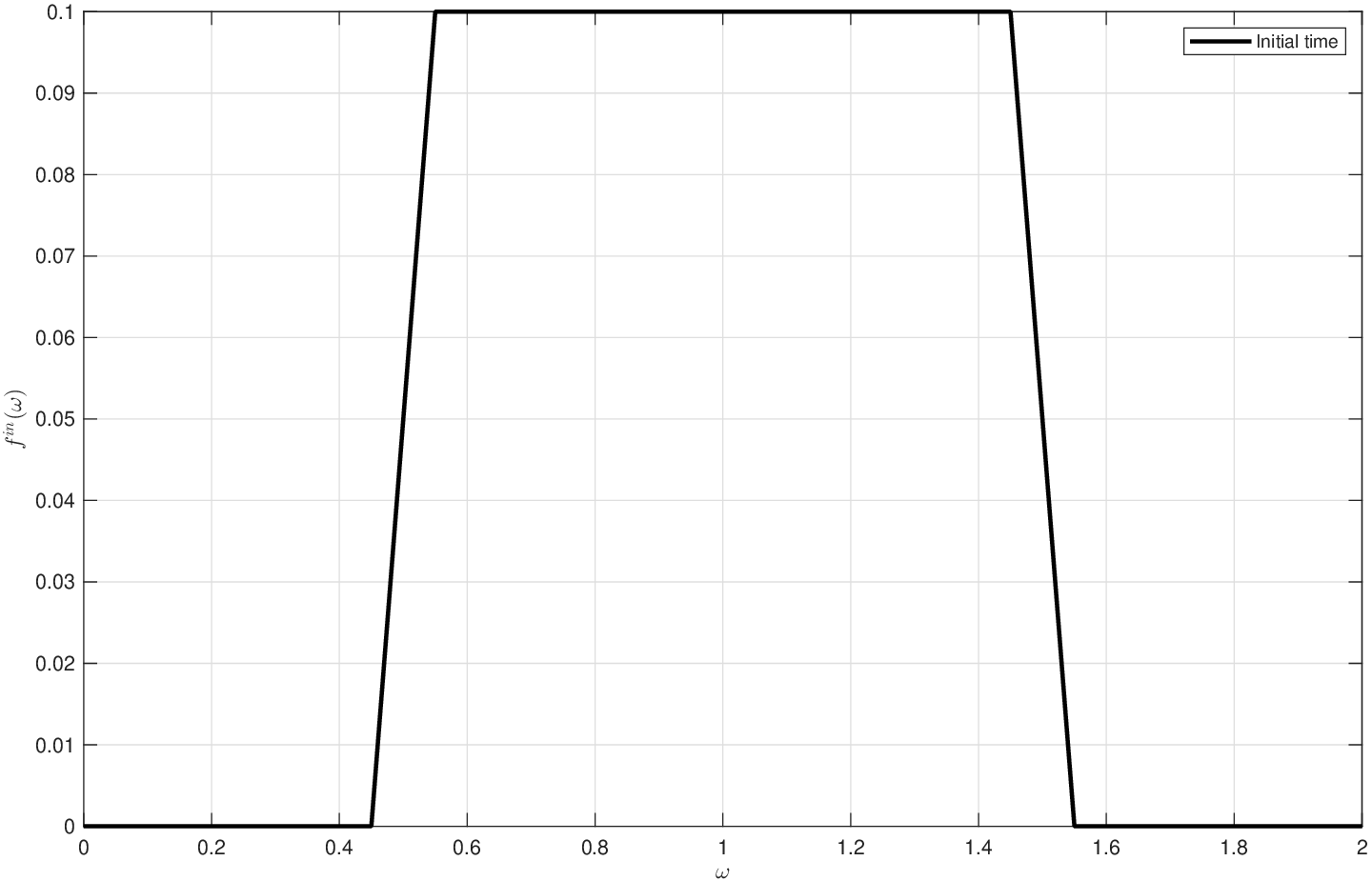}
			\caption{Initial Time}
			\label{test3.1}
		\end{subfigure}
		\begin{subfigure}{.45\textwidth}
			\centering
			\includegraphics[width=1.0\textwidth]{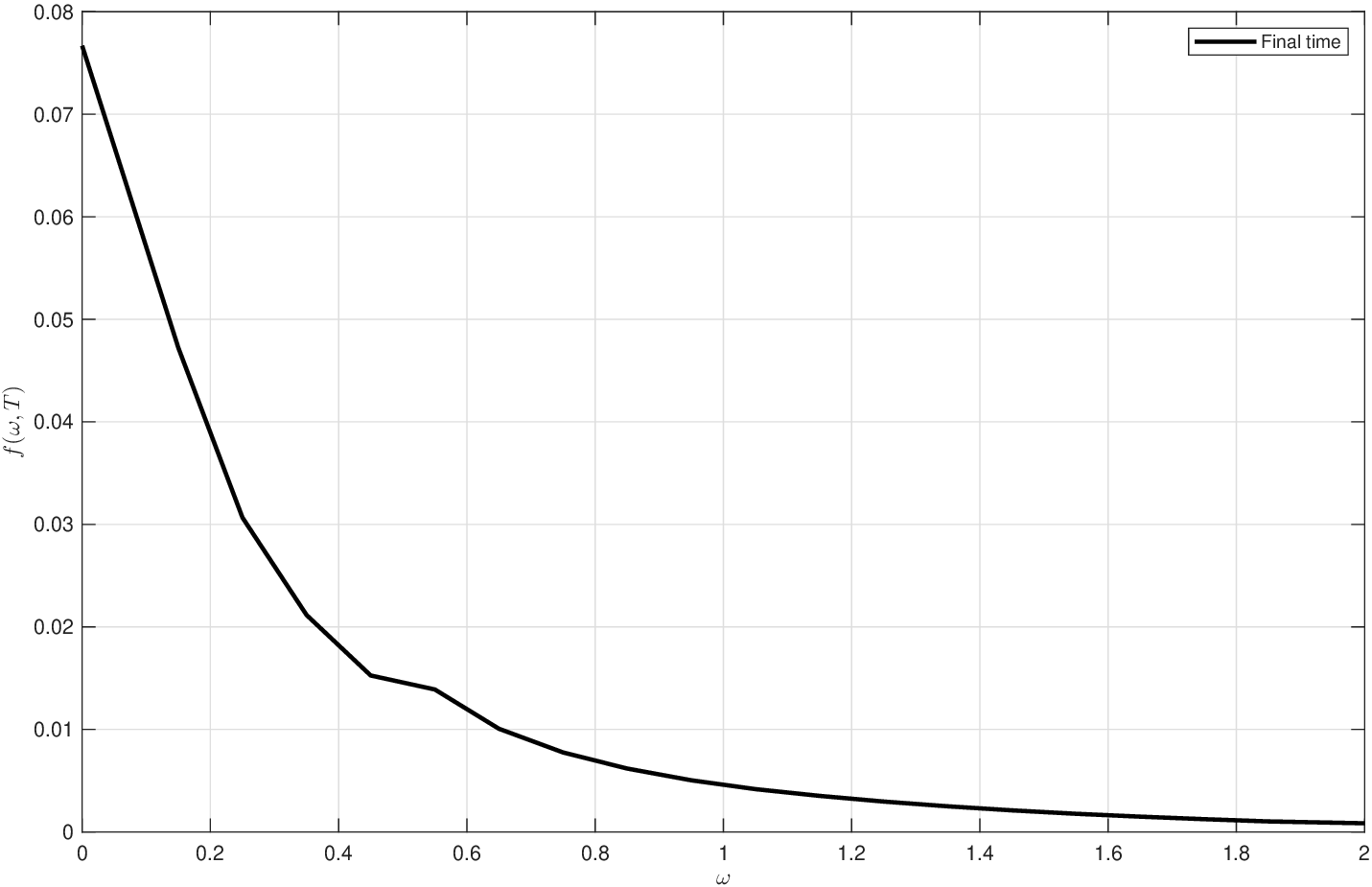}
			\caption{Final Time}
			\label{test3.2}
		\end{subfigure}
		\caption{Evolution of wave density $f(\omega)$ at initial and final time.}
		\label{test3}
	\end{figure}
	 
In Figure~\ref{f5}, we present the time evolution of the total energy (A) and the third-order moment (B) for different values of $C_1$ and $C_2$. The plots exhibit a clear decay over time, which aligns well with the theoretical predictions.

	\begin{figure}[H]
		\begin{subfigure}{.45\textwidth}
			\centering
			\includegraphics[width=1.0\textwidth]{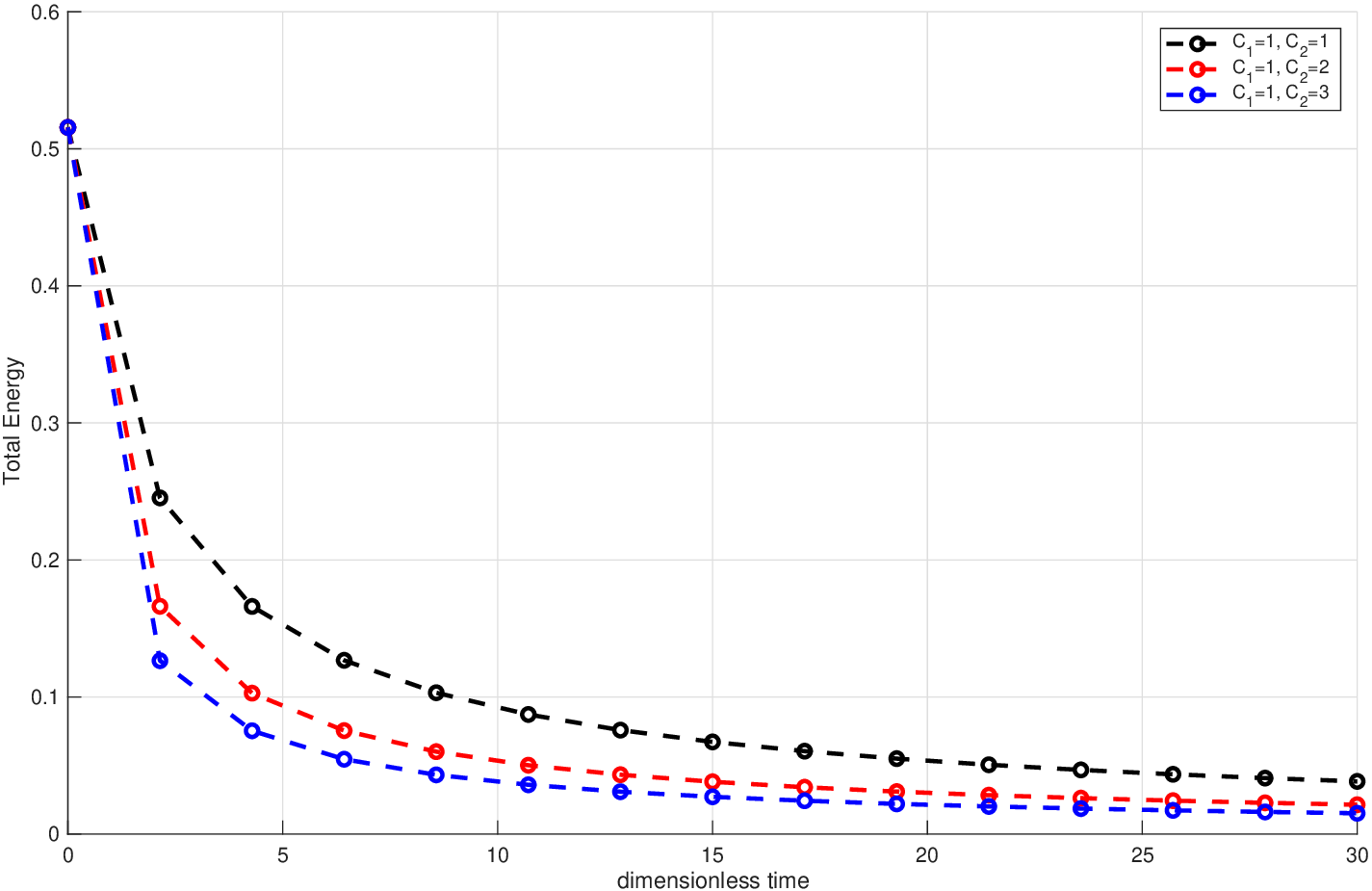}
			\caption{}
			\label{f5.1}
		\end{subfigure}
		\begin{subfigure}{.45\textwidth}
			\centering
			\includegraphics[width=1.0\textwidth]{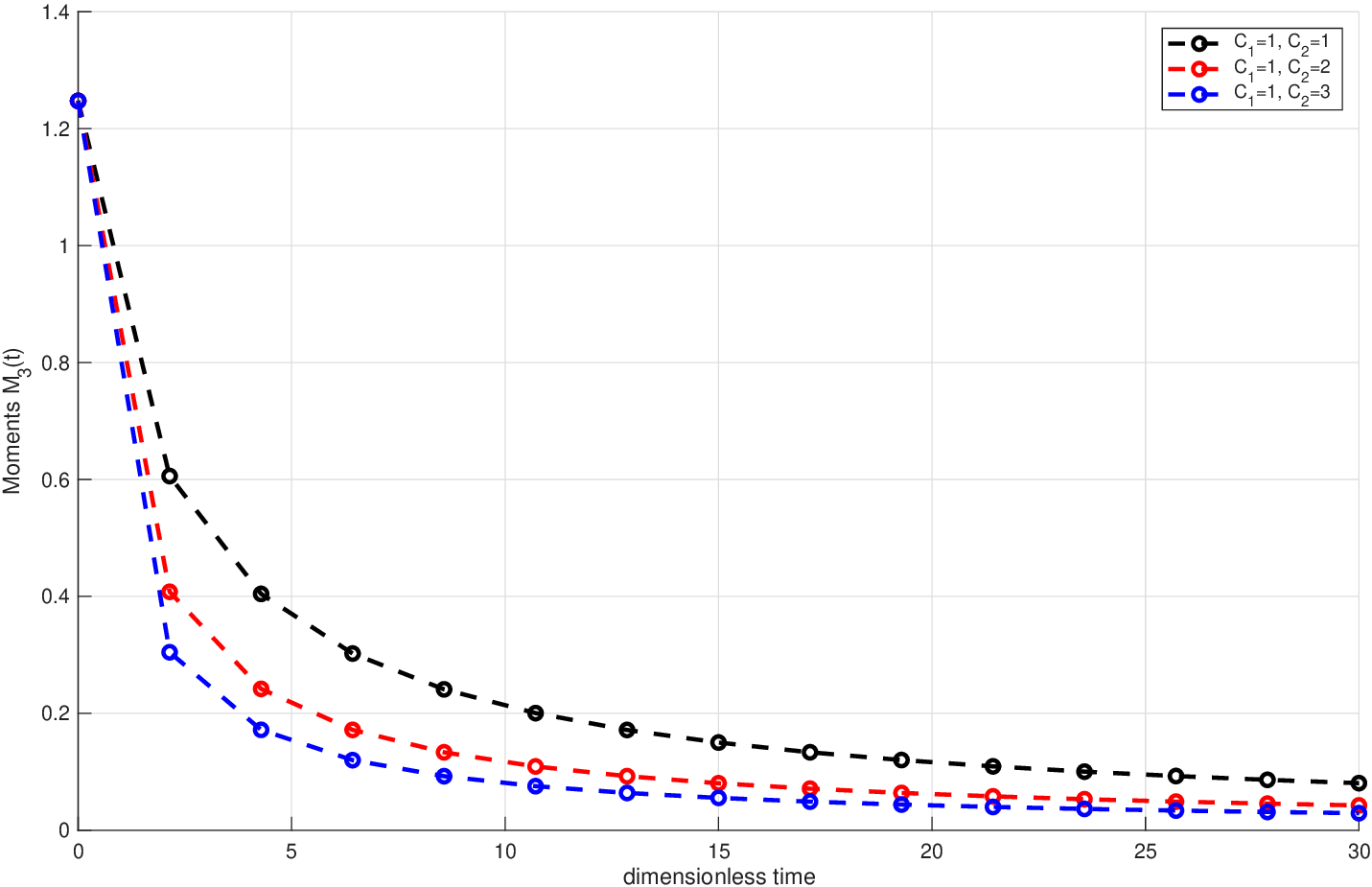}
			\caption{}
			\label{f5.3}
		\end{subfigure}
		\caption{Time evolution of the total energy (A) and third order moment (B) for different values of $C_1$ and $C_2$ when $\sigma=0.50$, $\gamma = 0.50$, and  $\ds \left|k\right|\left(\omega\right) = \sqrt{\omega}$.}
		\label{f5}
	\end{figure}
	
In Figure~\ref{f6}, we present the temporal evolution of the same quantities for the frequency function $|k|(\omega) = \omega^{1/3}$. It is observed that, in this case as well, the proposed scheme exhibits a similar decay of the total energy over time.

	\begin{figure}[H]
		\begin{subfigure}{.45\textwidth}
			\centering
			\includegraphics[width=1.0\textwidth]{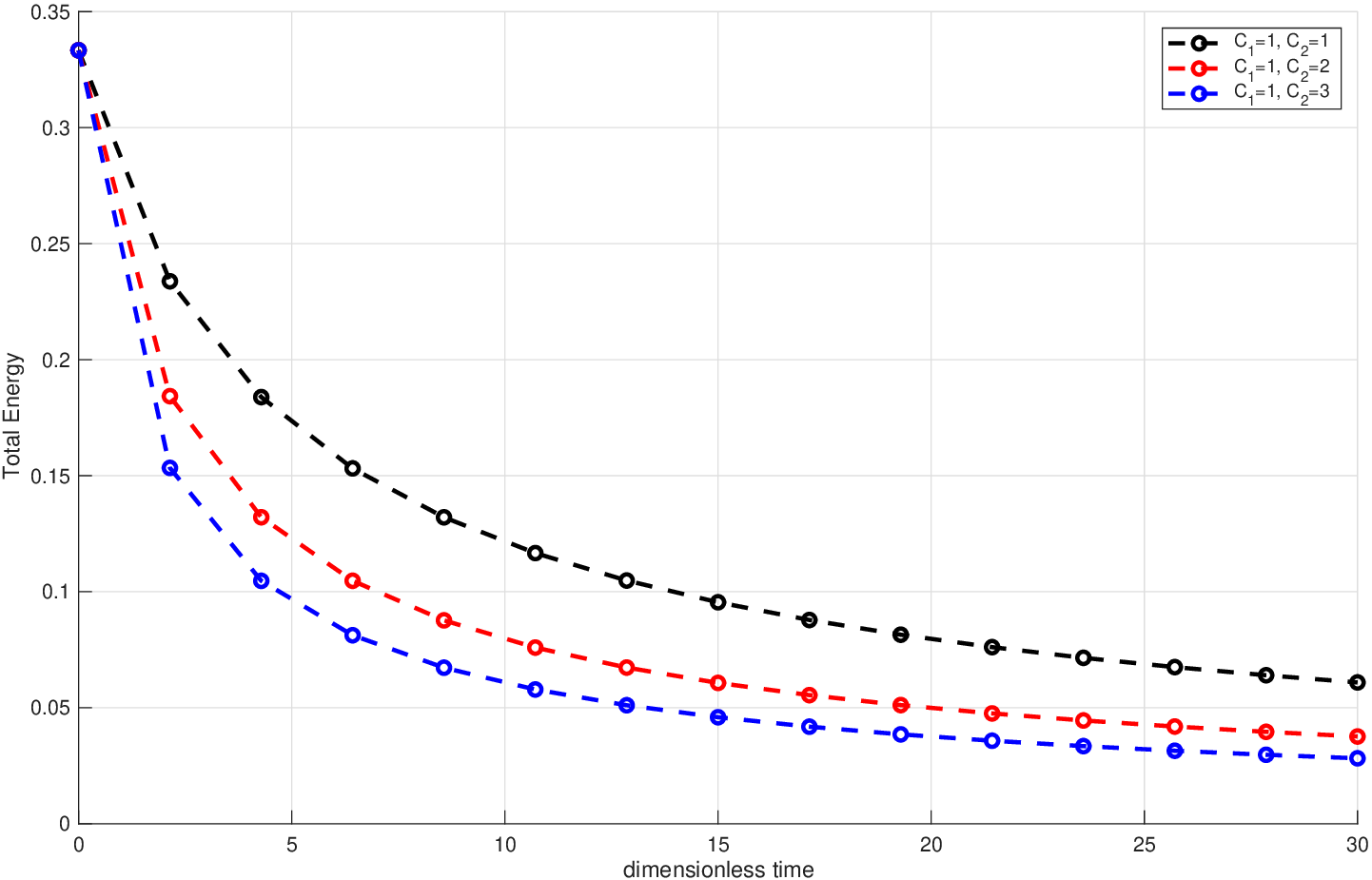}
			\caption{}
			\label{f6.1}
		\end{subfigure}
		\begin{subfigure}{.45\textwidth}
			\centering
			\includegraphics[width=1.0\textwidth]{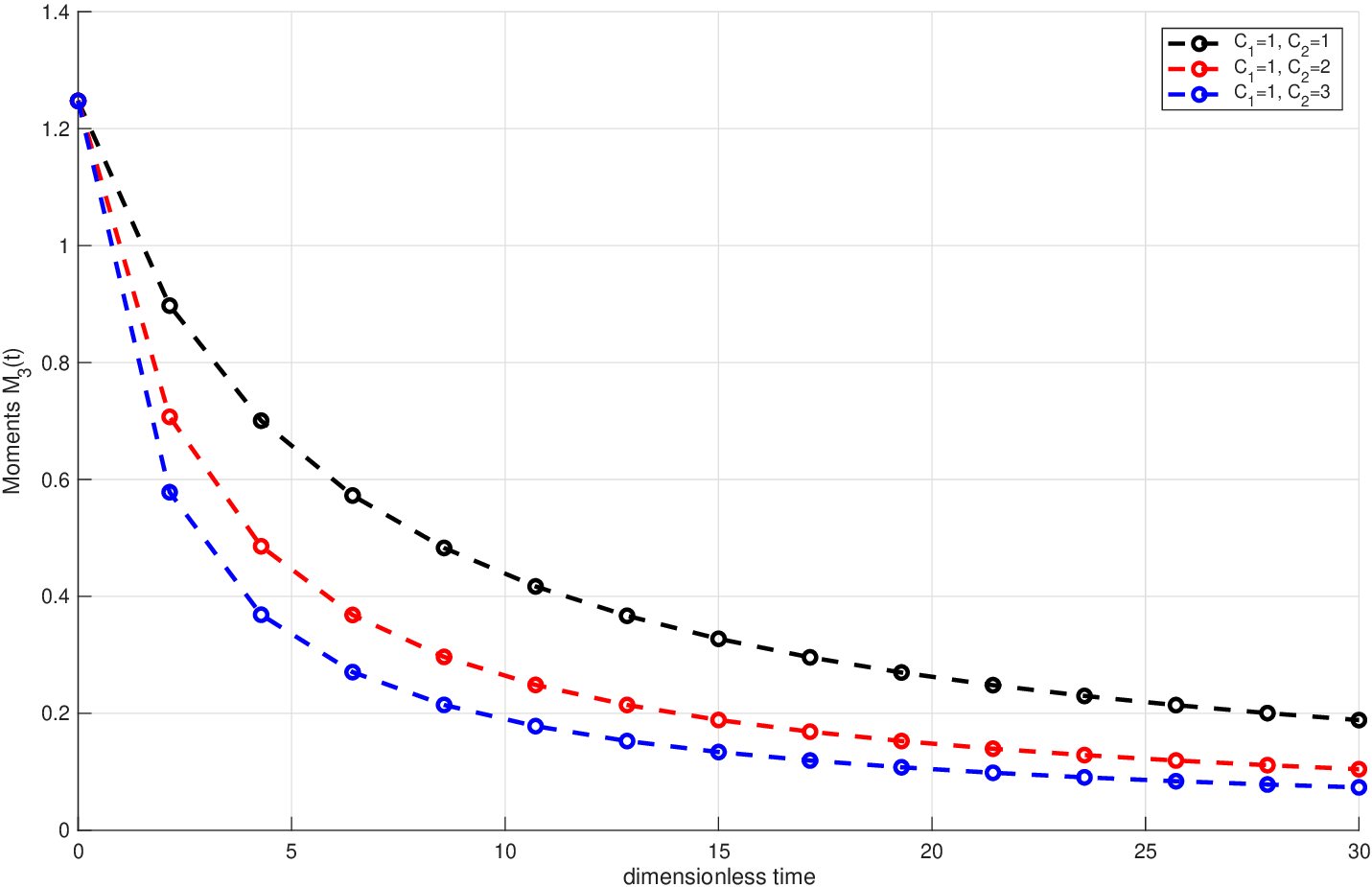}
			\caption{}
			\label{f6.3}
		\end{subfigure}
		\caption{Time evolution of the total energy  and third order moment for different values of $C_1$ and $C_2$ when $\sigma=0.50$, $\gamma = 0.50$, and  $\ds \left|k\right|\left(\omega\right) = {\omega}^{1/3}$.}
		\label{f6}
	\end{figure}
	
Finally, we conclude our numerical experiments by fixing the values of $C_1$ and $C_2$ while varying the degrees of homogeneity $\sigma$ and $\gamma$. In this case as well, the approximate solutions exhibit behavior consistent with the theoretical results.

	\begin{figure}[H]
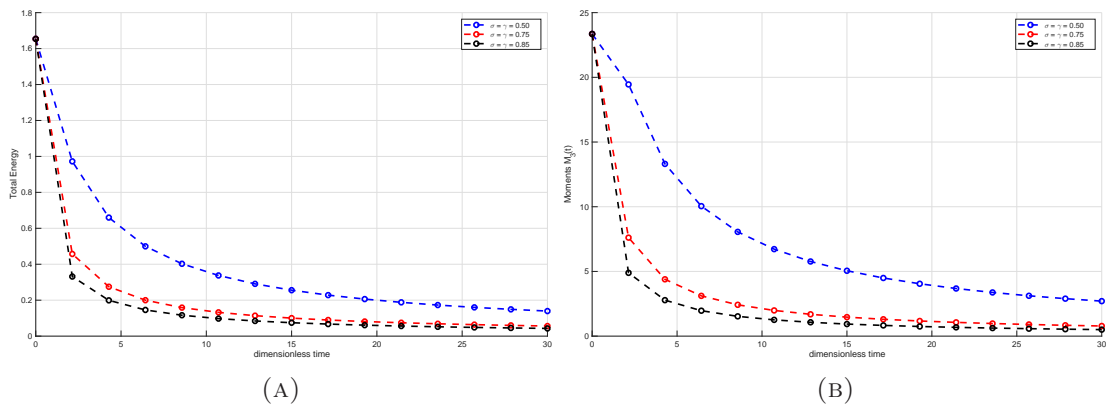

		\begin{subfigure}{.45\textwidth}
			\centering
			\includegraphics[width=1.0\textwidth]{EPS/Energy_Exp_SigmaGamma}
			\caption{}
			\label{f3_sigma.1}
		\end{subfigure}
		\begin{subfigure}{.45\textwidth}
			\centering
			\includegraphics[width=1.0\textwidth]{EPS/ThirdMom_Exp_SigmaGamma}
			\caption{}
			\label{f3_sigma.3}
		\end{subfigure}
		\caption{Time evolution of the total energy (A) and third order moment (B) for different values of $\sigma$ and $\gamma$ when  $\ds \left|k\right|\left(\omega\right) = \sqrt{\omega}$.}
		\label{f3_sigma}
	\end{figure}

	\bibliographystyle{unsrt}
\bibliographystyle{plain}

\bibliography{WaveTurbulenc}

\def\cprime{$'$} \def\cprime{$'$} \def\cprime{$'$} \def\cprime{$'$}
  \def\cprime{$'$} \def\cprime{$'$} \def\cprime{$'$} \def\cprime{$'$}
  \def\cprime{$'$} \def\cprime{$'$} \def\cprime{$'$} \def\cprime{$'$}
\begin{thebibliography}{10}

\bibitem{das2024numerical}
Arijit Das and Minh-Binh Tran.
\newblock Numerical schemes for a fully nonlinear coagulation--fragmentation
  model coming from wave kinetic theory.
\newblock {\em Proceedings of the Royal Society A}, 481(2316):20250197, 2025.

\bibitem{walton2022deep}
S.~Walton, M.-B. Tran, and A.~Bensoussan.
\newblock A deep learning approximation of non-stationary solutions to wave
  kinetic equations.
\newblock {\em Applied Numerical Mathematics}, 2022.

\bibitem{walton2023numerical}
S.~Walton and M.-B. Tran.
\newblock A numerical scheme for wave turbulence: 3-wave kinetic equations.
\newblock {\em SIAM Journal on Scientific Computing}, 45(4):B467--B492, 2023.

\bibitem{walton2024numerical}
Steven Walton and Minh-Binh Tran.
\newblock Numerical schemes for 3-wave kinetic equations: A complete treatment
  of the collision operator.
\newblock {\em Journal of Computational Physics}, page 114147, 2025.

\bibitem{Peierls:1993:BRK}
R.~Peierls.
\newblock Zur kinetischen theorie der warmeleitung in kristallen.
\newblock {\em Annalen der Physik}, 395(8):1055--1101, 1929.

\bibitem{Peierls:1960:QTS}
R.~E. Peierls.
\newblock Quantum theory of solids.
\newblock In {\em Theoretical physics in the twentieth century ({P}auli
  memorial volume)}, pages 140--160. Interscience, New York, 1960.

\bibitem{brout1956statistical}
R.~Brout and I.~Prigogine.
\newblock Statistical mechanics of irreversible processes part viii: general
  theory of weakly coupled systems.
\newblock {\em Physica}, 22(6-12):621--636, 1956.

\bibitem{zaslavskii1967limits}
G.~M. Zaslavskii and R.~Z. Sagdeev.
\newblock Limits of statistical description of a nonlinear wave field.
\newblock {\em Soviet physics JETP}, 25:718--724, 1967.

\bibitem{hasselmann1962non}
K.~Hasselmann.
\newblock On the non-linear energy transfer in a gravity-wave spectrum part 1.
  general theory.
\newblock {\em Journal of Fluid Mechanics}, 12(04):481--500, 1962.

\bibitem{hasselmann1974spectral}
K.~Hasselmann.
\newblock On the spectral dissipation of ocean waves due to white capping.
\newblock {\em Boundary-Layer Meteorology}, 6(1-2):107--127, 1974.

\bibitem{benney1966nonlinear}
D.~J. Benney and P.~G. Saffman.
\newblock Nonlinear interactions of random waves in a dispersive medium.
\newblock {\em Proc. R. Soc. Lond. A}, 289(1418):301--320, 1966.

\bibitem{benney1969random}
D.~J. Benney and A.~C. Newell.
\newblock Random wave closures.
\newblock {\em Studies in Applied Mathematics}, 48(1):29--53, 1969.

\bibitem{zakharov2012kolmogorov}
V.~E. Zakharov, V.~S. L'vov, and G.~Falkovich.
\newblock {\em Kolmogorov spectra of turbulence I: Wave turbulence}.
\newblock Springer Science \& Business Media, 2012.

\bibitem{soffer2018energy}
Avy Soffer and Minh-Binh Tran.
\newblock On the energy cascade of 3-wave kinetic equations: beyond
  kolmogorov--zakharov solutions.
\newblock {\em Communications in Mathematical Physics}, 376(3):2229--2276,
  2020.

\bibitem{staffilani2024energy}
G.~Staffilani and M.-B. Tran.
\newblock On the energy transfer towards large values of wavenumbers for
  solutions of 4-wave kinetic equations.
\newblock {\em arXiv preprint arXiv:2407.18508}, 2024.

\bibitem{staffilani2024condensation}
G.~Staffilani and M.-B. Tran.
\newblock Condensation and non-condensation times for 4-wave kinetic equations.
\newblock {\em arXiv preprint arXiv:2407.18533}, 2024.

\bibitem{staffilani2025energyfinite}
G.~Staffilani and M.-B. Tran.
\newblock Finite time energy cascade for mixed 3- and 4-wave kinetic equations.
\newblock {\em preprint}, 2025.

\bibitem{WiemanCornell}
M.H. Anderson, J.R. Ensher, M.R. Matthews, C.E. Wieman, and E.A. Cornell.
\newblock Observation of {B}ose-{E}instein {C}ondensation in a dilute atomic
  vapor.
\newblock {\em Science}, 269(5221):198--201, 1995.

\bibitem{Ketterle}
M.~R. Andrews, C.~G. Townsend, H.-J. Miesner, D.~S. Durfee, D.~M. Kurn, and
  W.~Ketterle.
\newblock Observation of interference between two {B}ose condensates.
\newblock {\em Science}, 275 (5300):637--641, 1997.

\bibitem{bradley1995evidence}
Cl.~C. Bradley, C.~A. Sackett, J.~J. Tollett, and R.~G. Hulet.
\newblock Evidence of bose-einstein condensation in an atomic gas with
  attractive interactions.
\newblock {\em Physical review letters}, 75(9):1687, 1995.

\bibitem{KD1}
T.~R. Kirkpatrick and J.~R. Dorfman.
\newblock Transport theory for a weakly interacting condensed {B}ose gas.
\newblock {\em Phys. Rev. A (3)}, 28(4):2576--2579, 1983.

\bibitem{KD2}
T.~R. Kirkpatrick and J.~R. Dorfman.
\newblock Transport in a dilute but condensed nonideal bose gas: Kinetic
  equations.
\newblock {\em J. Low Temp. Phys.}, 58:301--331, 1985.

\bibitem{ZarembaNikuniGriffin:1999:DOT}
E.~Zaremba, T.~Nikuni, and A.~Griffin.
\newblock Dynamics of trapped bose gases at finite temperatures.
\newblock {\em J. Low Temp. Phys.}, 116:277--345, 1999.

\bibitem{PomeauBrachetMetensRica}
S.~M'etens Y.~Pomeau, M.A.~Brachet and S.~Rica.
\newblock Th\'eorie cin\'etique d'un gaz de bose dilu\'e avec condensat.
\newblock {\em C. R. Acad. Sci. Paris S'er. IIb M'ec. Phys. Astr.},
  327:791--798, 1999.

\bibitem{ReichlGust:2012:CII}
E.~D Gust and L.~E. Reichl.
\newblock Collision integrals in the kinetic equations ofdilute bose-einstein
  condensates.
\newblock {\em arXiv:1202.3418}, 2012.

\bibitem{tran2020boltzmann}
M.-B. Tran and Y.~Pomeau.
\newblock Boltzmann-type collision operators for bogoliubov excitations of
  bose-einstein condensates: A unified framework.
\newblock {\em Physical Review E}, 101(3):032119, 2020.

\bibitem{reichl2019kinetic}
L.~E. Reichl and M.-B. Tran.
\newblock A kinetic equation for ultra-low temperature bose--einstein
  condensates.
\newblock {\em Journal of Physics. A, Mathematical and Theoretical (Online)},
  52(6), 2019.

\bibitem{GriffinNikuniZaremba:BCG:2009}
A.~Griffin, T.~Nikuni, and E.~Zaremba.
\newblock {\em Bose-condensed gases at finite temperatures}.
\newblock Cambridge University Press, Cambridge, 2009.

\bibitem{PomeauBinh}
Y.~Pomeau and M.-B. Tran.
\newblock Statistical physics of non equilibrium quantum phenomena.
\newblock {\em Lecture Notes in Physics, Springer}, 2019.

\bibitem{tran2021thermal}
M.-B. Tran and Y.~Pomeau.
\newblock On a thermal cloud--bose-einstein condensate coupling system.
\newblock {\em The European Physical Journal Plus}, 136(5):1--11, 2021.

\bibitem{soffer2018dynamics}
A.~Soffer and M.-B. Tran.
\newblock On the dynamics of finite temperature trapped bose gases.
\newblock {\em Advances in Mathematics}, 325:533--607, 2018.

\bibitem{staffilani2025condensate}
G.~Staffilani and M.-B. Tran.
\newblock Evolution of finite temperature bose-einstein condensates: Some
  rigorous studies on condensate growth.
\newblock {\em preprint}, 2025.

\bibitem{spohn2010kinetics}
H.~Spohn.
\newblock Kinetics of the bose--einstein condensation.
\newblock {\em Physica D: Nonlinear Phenomena}, 239(10):627--634, 2010.

\bibitem{deng2021propagation}
Y.~Deng and Z.~Hani.
\newblock Derivation of the wave kinetic equation: full range of scaling laws.
\newblock {\em arXiv preprint arXiv:2110.04565}, 2021.

\bibitem{deng2022rigorous}
Y.~Deng and Z.~Hani.
\newblock Rigorous justification of the wave kinetic theory.
\newblock {\em arXiv preprint arXiv:2207.08358}, 2022.

\bibitem{deng2023long}
Y.~Deng and Z.~Hani.
\newblock Long time justification of wave turbulence theory.
\newblock {\em arXiv preprint arXiv:2311.10082}, 2023.

\bibitem{deng2021full}
Yu~Deng and Zaher Hani.
\newblock Full derivation of the wave kinetic equation.
\newblock {\em arXiv preprint arXiv:2104.11204}, 2021.

\bibitem{qi2025fast}
K.~Qi, L.~Shen, and L.~Wang.
\newblock A fast fourier spectral method for wave kinetic equation.
\newblock {\em arXiv preprint arXiv:2503.12805}, 2025.

\bibitem{banks2025new}
J.~W. Banks and J.~Shatah.
\newblock A new approach to direct discretization of wave kinetic equations
  with application to a nonlinear schrodinger system in 2d.
\newblock {\em arXiv preprint arXiv:2509.03432}, 2025.

\bibitem{dolce2024convergence}
Michele Dolce, Ricardo Grande, et~al.
\newblock On the convergence rates of discrete solutions to the wave kinetic
  equation.
\newblock {\em Mathematics in Engineering}, 6(4):536--558, 2024.

\bibitem{germain2023local}
P.~Germain, J.~La, and K.~Z. Zhang.
\newblock Local well-posedness for the kinetic mmt model.
\newblock {\em arXiv preprint arXiv:2310.11893}, 2023.

\bibitem{menegaki20222}
A.~Menegaki.
\newblock L2-stability near equilibrium for the 4 waves kinetic equation.
\newblock {\em arXiv preprint arXiv:2210.11189}, 2022.

\bibitem{escobedo2024instability}
M.~Escobedo and A.~Menegaki.
\newblock Instability of singular equilibria of a wave kinetic equation.
\newblock {\em arXiv preprint arXiv:2406.05280}, 2024.

\bibitem{collot2024stability}
C.~Collot, H.~Dietert, and P.~Germain.
\newblock Stability and cascades for the kolmogorov--zakharov spectrum of wave
  turbulence.
\newblock {\em Archive for Rational Mechanics and Analysis}, 248(1):7, 2024.

\bibitem{GermainIonescuTran}
P.~Germain, A.~D. Ionescu, and M.-B. Tran.
\newblock Optimal local well-posedness theory for the kinetic wave equation.
\newblock {\em Journal of Functional Analysis}, 279(4):108570, 2020.

\bibitem{ampatzoglou2025inhomogeneous}
I.~Ampatzoglou, J.~K. Miller, N.~Pavlovi{\'c}, and M.~Taskovi{\'c}.
\newblock Inhomogeneous wave kinetic equation and its hierarchy in polynomially
  weighted spaces.
\newblock {\em Communications in Partial Differential Equations}, pages 1--43,
  2025.

\bibitem{escobedo2024entropy}
M.~Escobedo, P.~Germain, J.~La, and A.~Menegaki.
\newblock Entropy maximizers for kinetic wave equations set on tori.
\newblock {\em arXiv preprint arXiv:2412.16026}, 2024.

\bibitem{germain2024stability}
P.~Germain, J.~La, and A.~Menegaki.
\newblock Stability of rayleigh-jeans equilibria in the kinetic fpu equation.
\newblock {\em arXiv e-prints}, pages arXiv--2409, 2024.

\bibitem{GambaSmithBinh}
I.~M. Gamba, L.~M. Smith, and M.-B. Tran.
\newblock On the wave turbulence theory for stratified flows in the ocean.
\newblock {\em M3AS: Mathematical Models and Methods in Applied Sciences. Vol.
  30, No. 1 105-137}, 2020.

\bibitem{kim2025wave}
Y.~H. Kim, Y.~V. Lvov, L.~M. Smith, and M.-B. Tran.
\newblock On a wave kinetic equation with resonance broadening in oceanography
  and atmospheric sciences.
\newblock {\em arXiv preprint arXiv:2510.25031}, 2025.

\bibitem{cortes2020system}
E.~Cort{\'e}s and M.~Escobedo.
\newblock On a system of equations for the normal fluid-condensate interaction
  in a bose gas.
\newblock {\em Journal of Functional Analysis}, 278(2):108315, 2020.

\bibitem{EPV}
Miguel Escobedo, Federica Pezzotti, and Manuel Valle.
\newblock Analytical approach to relaxation dynamics of condensed {B}ose gases.
\newblock {\em Ann. Physics}, 326(4):808--827, 2011.

\bibitem{escobedo2023linearized1}
M.~Escobedo.
\newblock On the linearized system of equations for the condensate--normal
  fluid interaction at very low temperature.
\newblock {\em Studies in Applied Mathematics}, 150(2):448--456, 2023.

\bibitem{escobedo2023linearized}
M.~Escobedo.
\newblock On the linearized system of equations for the condensate-normal fluid
  interaction near the critical temperature.
\newblock {\em Archive for Rational Mechanics and Analysis}, 247(5):92, 2023.

\bibitem{escobedo2025local}
M.~Escobedo.
\newblock Local classical solutions of a kinetic equation for three waves
  interactions in presence of a dirac measure at the origin.
\newblock {\em arXiv preprint arXiv:2505.00267}, 2025.

\bibitem{nguyen2017quantum}
T.~T. Nguyen and M.-B. Tran.
\newblock On the {K}inetic {E}quation in {Z}akharov's {W}ave {T}urbulence
  {T}heory for {C}apillary {W}aves.
\newblock {\em SIAM J. Math. Anal.}, 50(2):2020--2047, 2018.

\bibitem{CraciunBinh}
G.~Craciun and M.-B. Tran.
\newblock A reaction network approach to the convergence to equilibrium of
  quantum {B}oltzmann equations for {B}ose gases.
\newblock {\em ESAIM: Control, Optimisation and Calculus of Variations}, 2021.

\bibitem{EscobedoBinh}
M.~Escobedo and M.-B. Tran.
\newblock Convergence to equilibrium of a linearized quantum {B}oltzmann
  equation for bosons at very low temperature.
\newblock {\em Kinetic and Related Models}, 8(3):493--531, 2015.

\bibitem{tran2020reaction}
M.-B. Tran, G.~Craciun, L.~M. Smith, and S.~Boldyrev.
\newblock A reaction network approach to the theory of acoustic wave
  turbulence.
\newblock {\em Journal of Differential Equations}, 269(5):4332--4352, 2020.

\bibitem{rumpf2021wave}
B.~Rumpf, A.~Soffer, and M.-B. Tran.
\newblock On the wave turbulence theory: ergodicity for the elastic beam wave
  equation.
\newblock {\em Mathematische Zeitschrift}, 310(2):1--41, 2025.

\bibitem{stewart1989global}
I.~W. Stewart and E.~Meister.
\newblock A global existence theorem for the general coagulation--fragmentation
  equation with unbounded kernels.
\newblock {\em Mathematical Methods in the Applied Sciences}, 11(5):627--648,
  1989.

\bibitem{hundsdorfer2013numerical}
Willem Hundsdorfer and Jan~G Verwer.
\newblock {\em Numerical solution of time-dependent
  advection-diffusion-reaction equations}, volume~33.
\newblock Springer Science \& Business Media, 2013.

\bibitem{linz1975convergence}
Peter Linz.
\newblock Convergence of a discretization method for integro-differential
  equations.
\newblock {\em Numerische Mathematik}, 25:103--107, 1975.

\end{thebibliography}
\end{document}